\theoremstyle{plain}
\newtheorem*{theorem*}{Theorem}
\newtheorem{theorem}{Theorem}[section]
\newtheorem{lemma}[theorem]{Lemma}
\newtheorem{conj}[theorem]{Conjecture}
\theoremstyle{definition}
\newtheorem{definition}[theorem]{Definition}
\newtheorem{remark}[theorem]{Remark}
\newtheorem{example}[theorem]{Example}
\renewcommand{\Im}{{\rm Im}\,}
\newcommand{\C}{\mathbb{C}}
\newcommand{\F}{\mathbb{F}}
\newcommand{\mO}{\mathbb{O}}
\newcommand{\N}{\mathbb{N}}
\newcommand{\cB}{\mathcal{B}}
\newcommand{\cC}{\mathcal{C}}
\newcommand{\cF}{\mathcal{F}}
\newcommand{\cN}{\mathcal{N}}
\newcommand{\cX}{\mathcal{X}}
\newcommand{\cQ}{\mathcal{Q}}
\newcommand{\cS}{\mathcal{S}}
\newcommand{\cT}{\mathcal{T}}
\newcommand{\cY}{\mathcal{Y}}
\DeclareMathOperator{\End}{End}
\DeclareMathOperator{\Gl}{GL}
\DeclareMathOperator{\im}{im}
\DeclareMathOperator{\Span}{span}
\DeclareMathOperator{\Gr}{Gr}
\DeclareMathOperator{\J}{J}
\DeclareMathOperator{\eJ}{eJ}
\DeclareMathOperator{\GL}{GL}
\DeclareMathOperator{\Sp}{Sp}
\DeclareMathOperator{\Lie}{Lie}
\DeclareMathOperator{\GR}{Gr}
\DeclareMathOperator{\SYB}{SYB}
\DeclareMathOperator{\std}{std}
\newcommand{\gl}{\mathfrak{gl}}
\newcommand{\fN}{\mathfrak{N}}
\newcommand{\fZ}{\mathfrak{Z}}
\newcommand{\spp}{\mathfrak{sp}}
\newcommand{\cha}{\check{\alpha}}
\newcommand{\pperp}{\perp \!\!\! \perp}
\newcommand{\xx}{\mathrm{x}}
\newcommand{\llangle}{\langle\langle}
\newcommand{\rrangle}{\rangle\rangle}
\newcommand{\ronto}{\twoheadrightarrow}
\begin{document}
\title[Exotic Spaltenstein Varieties]{Exotic Spaltenstein Varieties}

\author{Daniele Rosso}
\address{D.~Rosso: Department of Mathematics and Actuarial Science, Indiana University Northwest}
\email{drosso@iu.edu}

\author{Neil Saunders}
\address{N.~Saunders: Department of Mathematics, School of Science and Technology, City St George's, University of London}
\email{neil.saunders.3@city.ac.uk}

\date{\today}

\subjclass{14M15, 14L35, 05E10, 05E14}

\maketitle

\begin{abstract} 
We define a new family of algebraic varieties, called exotic Spaltenstein varieties.
These generalise the notion of Spaltenstein varieties (which are the partial flag analogues to classical Springer fibres) to the case of exotic Springer fibres. We show that, for self-adjoint nilpotent endomorphisms of order two, the top-dimensional irreducible components are in bijection with semi-standard Young bitableaux, via constructing an explicit map. Moreover, we are able to give a combinatorial formula for this top dimension. We conjecture that this description of the irreducible components holds for nilpotent endomorphisms of arbitrary order. Finally, we mention some connections to the Robinson-Schensted-Knuth correspondence. 
\end{abstract}

\section{Introduction}

\subsection{Classical Springer Fibres and  Spaltenstein Varieties} 
Spaltenstein varieties are generalisations of the famous Springer fibres and can be defined for any algebraic group. Let $G$ be a complex reductive algebraic group and let $x$ be any element in the nilpotent cone $\mathcal{N}$  which is a singular subvariety of the Lie algebra $\Lie(G)$. Let $B$ be a Borel subgroup of $G$ and let $\mathfrak{b} = \mathfrak{h} \oplus \mathfrak{n}$ be the Lie algebra of $B$, where $\mathfrak{h}$ is the Cartan subalgebra and $\mathfrak{n}=[\mathfrak{b},\mathfrak{b}]$. There is a resolution of singularities of $\mathcal{N}$, called the Springer resolution, and over each point $x \in \mathcal{N}$ its Springer fibre can be defined as $\mathcal{F}_{x}:=\{ gB \in G/B \, | \, gxg^{-1} \in \mathfrak{n} \}$, which are subvarieties of the full flag variety $\mathcal{B} := G/B$. \vspace{5pt}

In Type A, for $G = \GL_n$, using the realisation of $\mathcal{B}$ as the variety of \emph{complete flags} of vector spaces
$$
0 =F_{0} \subset F_{1} \subset F_{2} \subset \cdots \subset F_{n} = \mathbb{C}^{n}
$$
where $\dim F_{i}=i$, we may identify the Springer fibre $\mathcal{F}_{x}$ as the subvariety of $\mathcal{B}$ stabilised by $x$, that is we have $x(F_{i}) \subset F_{i-1}$. \vspace{5pt}

The geometry of these varieties is of great interest and plays a role in modern representation theory. For example, Springer \cite{Spr78}, showed that the top degree cohomology of $\mathcal{F}_{x}$ has an action of the symmetric group of degree $n$, which makes it isomorphic to the Specht module $S^{\lambda^{tr}}$, where $\lambda$ is the Jordan type of the nilpotent element $x$ and $\lambda^{tr}$ is the transpose partition. Moreoever, De Concini and Procesi \cite{DeCPro81} showed that the total cohomology ring $H^{*}(\mathcal{F}_{x},\mathbb{C})$ is isomorphic to a quotient of the polynomial ring $\mathbb{C}[x_1, \ldots, x_n]$ by a certain ideal of elementary symmetric functions, for which there is an action of the symmetric group coming from an equivariant surjection from the cohomology of the flag variety.   \vspace{5pt}

Spaltenstein varieties arise from replacing the Borel subgroup by a parabolic subgroup $P$, and choosing a suitable Levi decomposition of $\Lie(P)$. Thus in type A, for a fixed nilpotent element $x$ and parabolic $P$, the Spaltenstein variety corresponding to the triple $(G=\GL_n,P,x)$ is defined to be:
$$
X_{P,x} = \{0 =F_{0} \subset F_{\alpha_1} \subset F_{\alpha_2} \subset \cdots \subset F_{\alpha_d} = \mathbb{C}^{n} \, | \,  \dim (F_{\alpha_{i}}/F_{\alpha_{i-1}})=\alpha_{i} \text{ and}  \, x(F_{\alpha_{i}} )\subset F_{\alpha_{i-1}}\}
$$
and where $(\alpha_{1}, \ldots, \alpha_{d})$ is a composition of $n$ (that is, a sequence of non-negative integers whose sum is $n$, see Definition \ref{def:phi}) .  \vspace{5pt}

In type A, Spaltenstein \cite{Spa76} proved that the Springer fibres and Spaltenstein varieties are pure dimensional - that is, all of the irreducible components have the same dimension. Moreover, he showed that the irreducible components are in bijection with semi-standard Young tableaux. Generalising De Concini and Procesi's results, Brundan and Ostrik \cite{BO10} give a presentation for the cohomology of the Spaltenstein varieties, which can be viewed as an analogue of the Springer fibres sitting inside the partial flag variety.  \vspace{5pt}

Outside of type A, little is known about the pure dimensionality of the $X_{P,x}$ - Spaltenstein gave an example of an $X_{P,x}$ which is not pure dimensional for an $x \in \mathfrak{so}_{8}$.  However, more recently in \cite[Theorem A]{Li20}, Li proved that for a classical group $G$ and fixed parabolic subgroup $P$ and nilpotent element $x$, the Spaltenstein varieties $X_{P,x}$ are pure dimensional if the Jordan type of $x$ is an even or odd partition - that is, of the form $(1^{m_1}3^{m_3}5^{m_5} \ldots )$ or $(2^{m_2}4^{m_4}6^{m_6} \ldots )$. By showing that the $X_{P,x}$ are Lagrangian in a certain partial resolution of a Slodowy slice, he is able to deduce further \cite[Theorem C]{Li20} that:
$$
\dim(X_{P,x}) = \tfrac{1}{2} (\dim T^{*}(G/P) - \dim \mathcal{O}_x)
$$
where $T^{*}(G/P)$ is the cotangent bundle of $G/P$ and $\mathcal{O}_{x}$ is the nilpotent orbit of $x$. 

\subsection{Kato's Exotic Nilpotent Cone and Exotic Springer Fibres}
We continue with the notation as above. The combinatorics of the Springer correspondence in Type A are particularly nice in the sense that the orbits of $\Gl_n$ on the nilpotent cone are in bijection with partitions of $n$, which parametrise the irreducible representations of its Weyl group (the symmetric group). This allowed Spaltenstein to show that there is a surjective map from $\mathcal{F}_{x}$ to $\std(\lambda)$  (standard Young tableaux of shape $\lambda$), which implies that the irreducible components of  $\mathcal{F}_{x}$ are in bijection with  $\std(\lambda)$. \vspace{5pt}

For an algebraic group outside of type A, the action on the nilpotent cone of its Lie algebra has disconnected stabilisers, which means that the Springer correspondence needs the extra data of certain local systems on the nilpotent orbits, and therefore the combinatorics becomes more complicated when trying to match with the representations of its Weyl group.  \vspace{5pt}

However, Kato's exotic nilpotent cone $\mathfrak{N}$ (see Definition \ref{def:ex-nilcone} and \cite{Kato09}) is a variation on the Type C setting which retains some of the combinatorial features seen in type A. In particular, the orbits of the action by the symplectic group $\Sp_{2n}$ on $\mathfrak{N}$ have connected stabilisers and are in bijection with the set $\mathcal{Q}_{n}$ of bipartitions of $n$, which parametrises the irreducible representations of the Weyl group of Type C (signed permutations). This enabled the authors in \cite{NRS1} to define an explicit map from the exotic Springer fibres (defined from a resolution of singularities of $\mathfrak{N}$) to standard Young bitableaux (SYB), which induced a bijection between the irreducible components of the exotic Springer fibres and the set of SYB (\cite[Theorem 2.12]{NRS1}). One consequence of this explicit map, was to recover Kato's original result that showed that the exotic Springer fibres were of pure dimension. In addition to Kato's results, the authors were able to further describe the geometric structure of these fibres in low dimension, as either being projective spaces, or projectivised line bundles over projective spaces, and also combinatorially describe when and how the closures of these irreducible components intersected (\cite[Section 8]{NRS1}.  \vspace{5pt}

Using a similar construction to Steinberg in \cite{rs}, where he demonstrated that the relative position of two Springer fibres sitting in the flag variety was determined by the classical Robinson-Schensted correspondence, the authors further exploited the map between the exotic Springer fibres and SYB to define an {\it exotic Robinson-Schensted correspondence} that is a recursive bijection between pairs of SYB of the same shape and elements of the Weyl group of type C (\cite[Section 3]{NRS2}).
\vspace{5pt}

In this paper, we define \emph{exotic Spaltenstein varieties}, which are the partial flags/parabolic subgroups version of the exotic Spring fibres of Kato, and we study their geometry. These varieties are not pure dimensional in general (see Example \ref{ex:nonpure}) but in analogy to the results of \cite{Spa76} and \cite{NRS1} we conjecture (see Conjecture \ref{conj:main}) that the top dimensional irreducible components are in bijection with semi-standard Young bitableaux, and we give a combinatorial formula for this dimension. \vspace{5pt}

The main result of the paper is that we are able to prove that this is true in the cases where the nilpotent endomorphism $x$ is such that $x^2=0$, see Theorem \ref{thm:main}. We also provide some remarks about how a proof might proceed in general (see Section \ref{section:extend}). \vspace{5pt}

Finally, our bijection between top-dimensional components of exotic Spaltenstein varieties and semi-standard Young bitableaux would give rise to an {\it exotic Robinson-Schensted-Knuth correspondence}, which should be a generalisation of our exotic Robinson-Schensted algorithm from \cite{NRS2}, in a way that is analogous to the Type A case studied in \cite{Ros12} (see Section \ref{section:ersk}).

\subsection*{Acknowledgments}D.R. was supported in this research by a Summer Faculty Fellowship and a Grant-in-Aid of research from Indiana University Northwest. N.S. was supported by the London Mathematical Society’s Scheme 4 \emph{Research in Pairs} grant, ref. No. 41939. 

\section{Background and Notation}\label{sec:notation}

\subsection{Symplectic Spaces and Isotropic Grassmannians}
We let $\N=\{0,1,2,\ldots,\}$ be the set of nonnegative integers. All vector spaces will be assumed to be over the field $\C$ of complex numbers.
Throughout the paper, we will have $n\in \N$ and $V$ will be a $2n$-dimensional symplectic vector space, i.e. it is equipped with a nondegenerate, skew-symmetric bilinear form $\langle ~,~\rangle: V\times V\to \C$. If $W\subset V$, we denote by $W^\perp$ its perpendicular space with respect to the form $\langle ~,~\rangle$.

We define the symplectic group as the group of invertible linear transformations of $V$ preserving the form
$$\Sp_{2n}=\Sp(V):=\{g\in \Gl(V)~|~\langle gv, gw \rangle = \langle v,w\rangle, ~\forall v,w\in V\}.$$

\begin{definition}\label{def:Grass} For $0\leq k\leq n$, we define the \emph{isotropic Grassmannian variety} of all $k$-dimensional isotropic subspaces of $V$ by
$$\GR^\perp_{k,2n}= \GR^\perp_k(V)=\{ F\subset V~|~\dim(F)=k,~F\subset F^\perp \}.$$
\end{definition}
\begin{remark}\label{rem:grass-dim}Notice that $\Sp_{2n}$ acts transitively on $\GR^\perp_{k,2n}$, and it follows from \cite[Prop 4.4]{BKT} that
$\GR^\perp_{k,2n}$ is an irreducible projective variety of dimension $k(2n-k)-\frac{k(k-1)}{2}$.     
\end{remark}
If we restrict the bilinear form to a subspace $W\subset V$, in general it might be degenerate. So we will also need to consider degenerate skew-symmetric forms.
\begin{definition}Let $W$ be a vector space of dimension $d$, with a skew-symmetric bilinear form $\langle~,~\rangle$, possibly degenerate. Consider the radical $\operatorname{rad}\langle~,~\rangle=\{w\in W~|~\langle w,u \rangle=0,~\forall u\in W\}$. Then $W/\operatorname{rad}\langle~,~\rangle$ is a symplectic space with the induced form on the quotient hence even dimensional. We say that $2r=\dim\left(W/\operatorname{rad}\langle~,~\rangle\right)$ is the \emph{rank} of the bilinear form, clearly $0\leq r\leq \frac{d}{2}$. 

In this case, for $0\leq k\leq d-r$, we can define the (degenerate) isotropic Grassmannian
$$\GR^{\perp r}_{k,d}= \GR^\perp_k(W)=\{ F\subset W~|~\dim(F)=k,~F\subset F^\perp \}.$$
Note that for $d=2n$ and $r=n$, we recover the previous Definition \ref{def:Grass}.
\end{definition}
\begin{lemma}\label{lem:dim-deg-grass}The degenerate isotropic Grassmannian $\GR^{\perp r}_{k,d}$, for all $0\leq r\leq \frac{d}{2}$ and $0\leq k\leq d-r$ is an irreducible projective variety of dimension $k(d-k)-\frac{k(k-1)}{2}+\frac{(k-r)(k-r-1)}{2}$, with the last term being zero when $k\leq r$.
\end{lemma}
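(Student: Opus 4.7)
The plan is to stratify $X := \GR^{\perp r}_{k,d}$ by $s := \dim(F \cap R)$, where $R := \operatorname{rad}\langle~,~\rangle$ has dimension $d - 2r$. Letting $\pi\colon W \to W/R$ be the projection, the skew form on $W$ descends to the nondegenerate symplectic form on the $2r$-dimensional quotient $W/R$, and $F$ is isotropic in $W$ if and only if $\bar F := (F+R)/R$ is isotropic in $W/R$. In particular $\dim \bar F = k - s \leq r$, so the stratum $X_s := \{F \in X : \dim(F \cap R) = s\}$ is nonempty exactly for $s_0 := \max(0, k - r) \leq s \leq \min(k, d - 2r)$.

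The map $F \mapsto (F \cap R,\, \bar F)$ sends $X_s$ surjectively onto $\Gr(s, R) \times \GR^\perp_{k-s}(W/R)$, whose second factor is the irreducible nondegenerate isotropic Grassmannian of Remark~\ref{rem:grass-dim}, so the base is an irreducible smooth variety. For a fixed pair $(I, \bar F)$ the fiber consists of the $k$-dimensional $F \subset \pi\inv(\bar F)$ with $F \supset I$ and $\pi(F) = \bar F$; these correspond bijectively to sections of the surjection $\pi\inv(\bar F)/I \twoheadrightarrow \bar F$, whose kernel $R/I$ has dimension $d - 2r - s$, and hence form an affine space of dimension $(k - s)(d - 2r - s)$. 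Summing base and fiber,
\[
\dim X_s = s(d - 2r - s) + (k - s)(2r - k + s) - \binom{k - s}{2} + (k - s)(d - 2r - s),
\]
a concave quadratic in $s$ whose maximum on the admissible range occurs at $s = s_0$, and a direct computation confirms this maximum equals the formula claimed. Since the morphism $X_{s_0} \to \Gr(s_0, R) \times \GR^\perp_{k-s_0}(W/R)$ has irreducible (affine) fibers of constant dimension over an irreducible base, $X_{s_0}$ is itself irreducible.

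The main obstacle is then to show that $X_{s_0}$ is dense in $X$, from which $X = \overline{X_{s_0}}$ follows as an irreducible variety of the claimed dimension. For this I would use a one-parameter deformation: fix $F \in X_s$ with $s > s_0$, choose a basis $f_1, \ldots, f_k$ of $F$ with $f_1, \ldots, f_s$ spanning $F \cap R$, and pick a vector $v \in F^\perp$ with $v \notin F + R$. Such a $v$ exists, because $\dim F^\perp = d - k + s$ strictly exceeds $\dim(F + R) = k + d - 2r - s$ precisely when $s > k - r$, a condition guaranteed by $s > s_0$. The family $F_t := \operatorname{span}(f_1 + tv,\, f_2, \ldots, f_k)$ then has $\dim F_t = k$ (as $v \notin F$), remains isotropic (as $v \in F^\perp$ and the form is skew), and for $t \neq 0$ satisfies $\dim(F_t \cap R) = s - 1$, since $\bar v \notin \bar F$ forces the coefficient of $f_1 + tv$ in any element of $F_t \cap R$ to vanish. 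Descending induction on $s$ then places every $F \in X_s$ in $\overline{X_{s_0}}$. Projectivity of $X$ is automatic, as it is a closed subvariety of the Grassmannian $\Gr(k, W)$ cut out by the isotropy condition.
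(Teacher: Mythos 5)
Your argument is correct, but it takes a genuinely different route from the paper's. The paper embeds $W$ into a nondegenerate symplectic space $\tilde W$ of dimension $2d-2r$ and recognizes $\GR^{\perp r}_{k,d}$ as a Schubert variety inside $\GR^\perp_k(\tilde W)$ (following Mihai's analysis of odd symplectic flag manifolds), then reads off irreducibility from the fact that Schubert varieties are closures of affine Schubert cells and computes the dimension from the Schubert cell formulas of Buch--Kresch--Tamvakis. You instead stratify intrinsically by $s=\dim(F\cap R)$, realize each stratum as an affine-space bundle over $\Gr(s,R)\times \GR^\perp_{k-s}(W/R)$, locate the stratum of maximal dimension at $s_0=\max(0,k-r)$ (the vertex of your concave quadratic sits at $\tfrac{2(k-r)}{3}-\tfrac16$, which is indeed $\leq s_0$ in both regimes), and then prove that this open stratum is dense by an explicit one-parameter degeneration that lowers $s$ by one, using that $\dim F^\perp - \dim(F+R)=2(s-(k-r))>0$ exactly on the higher strata and that $F+R\subseteq F^\perp$ always. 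Both proofs are complete; yours is more self-contained and elementary (no Schubert calculus, no ambient symplectic space), at the cost of being longer and of needing the deformation argument to glue the strata, whereas the paper's approach is shorter given the cited machinery and in addition identifies the variety explicitly as a Schubert variety, which carries extra structural information. One small presentational point: the inductive density step should be phrased as $X_s\subseteq\overline{X_{s-1}}$ for each $s>s_0$, whence $\overline{X_s}\subseteq\overline{X_{s-1}}\subseteq\cdots\subseteq\overline{X_{s_0}}$; your text says this in spirit, but making the chain explicit removes any ambiguity.
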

\begin{proof}
We use the same idea as \cite[\S 4.8]{Mi}. Let $\{e_1,\ldots,e_d\}$ be a basis of $W$ with the property that $e_1,\ldots,e_{d-2r}$ is a basis of $\operatorname{rad}\langle~,~\rangle$, and such that $\langle e_i, e_j\rangle= \delta_{i+j,2d-2r+1}$ for all $1\leq i\leq d-r$. We embed $W$ in a symplectic vector space $\tilde{W}$, of dimension $2d-2r$, with basis $\{e_1,\ldots,e_{2d-2r}\}$ and non degenerate skew-symmetric bilinear form $\langle~,~\rangle_{\sim}$ defined by $\langle e_i, e_j\rangle_\sim= \delta_{i+j,2d-2r+1}$ for all $1\leq i\leq d-r$. In this way, the degenerate bilinear form on $W$ becomes the restriction of the symplectic form $\langle~,~\rangle_{\sim}$. Then the degenerate isotropic Grassmannian $\GR^\perp_k(W)$ can be identified with a subvariety of the isotropic Grassmannian in $\tilde{W}$, more precisely
\begin{equation}\label{eq:schubert}
\GR^\perp_k(W)=\{F\in \GR^\perp_k(\tilde{W})~|~F\subset W\}.
\end{equation}
As in \cite[\S 4.8]{Mi}, this is a Schubert variety in $\GR^\perp_k(\tilde{W})$, i.e. the closure of a Schubert cell. We can distinguish two cases. When $k\leq r$, then \eqref{eq:schubert} is the Schubert variety $X_{i_1,\ldots,i_k}$ for the index set $i_1=d-k+1,\ldots, i_k=d$. When $r<k\leq d-r$, it's the Schubert variety for the index set $i_1=d-r-k+1,\ldots,i_{k-r}=d-2r$, $i_{k-r+1}=d-r+1,\ldots,i_k=d$. In both cases, since the Schubert cell is isomorphic to an affine space, it is irreducible and so it its closure. 

To conclude the proof we just need to compute the dimensions of the Schubert cells in the two cases. To do this, we will instead use the formulas from the proof of \cite[Prop 4.4]{BKT} (it would also be possible to use \cite[Prop 4.4]{BKT} directly by first translating the index set into a partition). Each index $i_j$ gives a dimension of $i_j-j-\#\{\ell<j~|~i_\ell+i_j>2d-2r+1\}$, hence
\begin{align*}\dim X_{i_1,\ldots,i_k}&=\sum_{j=1}^k(i_j-j-\#\{\ell<j~|~i_\ell+i_j>2d-2r+1\})\\
&=\left(\sum_{j=1}^ki_j\right)-\frac{k(k+1)}{2}-\#\{(\ell,j)~|~1\leq \ell<j\leq k,~ i_\ell+i_j>2d-2r+1\}.
\end{align*}
When $k\leq r$, then $i_j=d-k+j$, so $\#\{(\ell,j)~|~1\leq \ell<j\leq k,~ i_\ell+i_j>2d-2r+1\}={k \choose 2}=\frac{k(k-1)}{2}$, and we get indeed that
$$\dim X_{i_1,\ldots,i_k}=k(d-k)-\frac{k(k-1)}{2}.$$
When $k>r$, then $i_j=d-r-k+j$ for $1\leq j\leq k-r$, and $i_j=d-k+j$ for $k-r+1\leq j\leq k$, so  $\#\{(\ell,j)~|~1\leq \ell<j\leq k,~ i_\ell+i_j>2d-2r+1\}={r \choose 2}=\frac{r(r-1)}{2}$ and with a short computation we get indeed that 
$$\dim X_{i_1,\ldots,i_k}=k(d-k)-\frac{k(k-1)}{2}+\frac{(k-r)(k-r-1)}{2}.$$
\end{proof}
\begin{remark}\label{rem:third-term}
Notice that if $k=r+1$, the term $\frac{(k-r)(k-r-1)}{2}$ is again equal to zero, so in the formula of Lemma \ref{lem:dim-deg-grass}, the third term can be ignored whenever $k\leq r+1$.
\end{remark}

\subsection{Exotic Nilpotent Cone}
Let $V$ be a $2n$-dimensional symplectic space $V$, with symplectic group $\Sp_{2n}=\Sp(V)$, we identify its Lie algebra as follows
$$\spp_{2n}:=\Lie(\Sp_{2n})=\{x\in \End(V)~|~\langle xv,w\rangle + \langle v,xw\rangle =0, ~\forall v,w\in V\}.$$

The adjoint action of $\Sp_{2n}$ on $\spp_{2n}$ can be interpreted as the restriction of the $\Sp_{2n}$-action on $\gl_{2n}=\End(V)$ given by conjugation. This action gives a direct sum decomposition of $\Sp_{2n}$-modules
$\gl_{2n}=\spp_{2n}\oplus \cS$, where $\cS$ can also be described directly as
$$\cS=\{x\in \End(V)~|~\langle xv,w\rangle =\langle v,xw\rangle, ~\forall v,w\in V\}.$$
Finally we let $\cN:=\{x\in\End(V)~|~x^k=0,\text{ for some }k\}$ be the \emph{nilpotent cone} of the Lie algebra of $\Gl_{2n}=\Gl(V)$.

\begin{definition}\label{def:ex-nilcone}
The \emph{exotic nilpotent cone} is the (singular) variety $\mathfrak{N}:=V \times (\mathcal{S} \cap \mathcal{N})$.
\end{definition}

When it is not clear from the context, we might specify the underlying vector space and write $\cS(V)$, $\cN(V)$ or $\fN(V)$.

If $k\in \N$, a \emph{composition} of $k$ is a finite sequence $\alpha=(\alpha_1,\alpha_2,\ldots,\alpha_m)$ such that for all $1\leq i\leq m$, $\alpha_i$ is a positive integer, and $\sum_{i=1}^m\alpha_i=k$. We denote this by $|\alpha|=k$ or $\alpha\vDash k$. If $\alpha=(\alpha_1,\alpha_2,\ldots,\alpha_m)$, then $\ell(\alpha):=m$, the \emph{length} of the composition is the number of its parts. 

A \emph{partition} of $k$ is a composition of $k$ where the integers are weakly decreasing, that is $\lambda=(\lambda_1,\ldots,\lambda_m)$ is such that $\lambda_1\geq\ldots\geq\lambda_m>0$ and $\sum_{i=1}^k\lambda_i=k$. We denote it by $\lambda \vdash k$ or $|\lambda|=k$. On occasion it will be convenient for us to consider a partition to have some (or even infinitely many) parts of size zero appended to its end, this does not change the length of the partition which is the number of nonzero parts.

\begin{definition}\label{def:jordan}If $W$ is a vector space and $x\in \End(W)$ is a nilpotent transformation, its Jordan canonical form gives a partition of $\dim(W)$ by considering the sizes of the Jordan blocks (in weakly decreasing order). We denote this partition by $\J(x)$ and we call it the \emph{Jordan type} of $x$.
\end{definition}

If we have two partitions $\mu$, $\nu$ (of potentially different integers), we define two new partitions $\mu+\nu:=(\mu_1+\nu_1,\mu_2+\nu_2,\ldots)$, and $\mu\cup\nu$ which is the unique partition obtained by reordering the sequence $(\mu_1,\ldots,\mu_{\ell(\mu)},\nu_1,\ldots,\nu_{\ell(\nu)})$ to make it weakly decreasing. A \emph{bipartition} of $n$ is a pair $(\mu,\nu)$ of partitions such that $|\mu|+|\nu|=n$. We denote the set of all bipartitions of $n$ by $\cQ_n$.

The set $\cQ_n$ is important for us because of the following result.

\begin{theorem}[{\cite[Thm 6.1]{AH}}] The orbits of the symplectic group $\Sp_{2n}$ on the exotic nilpotent cone $\fN$ are in bijection with $\cQ_n$.
\end{theorem}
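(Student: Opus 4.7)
The plan is to construct an explicit $\Sp_{2n}$-invariant map $\tau\colon\fN\to\cQ_n$ and show it descends to a bijection on orbits. Given $(v,x)\in\fN$, the bipartition $\tau(v,x)=(\mu,\nu)$ will be built from the interplay between the Jordan structure of $x$, the cyclic submodule $U=\C[x]v$, and the symplectic form.

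The key geometric input is self-adjointness: from $\langle x^iv,w\rangle=\langle v,x^iw\rangle$ one sees that $U^\perp$ is $x$-stable, and the restriction of the form to $U$ is the Hankel matrix with entries $\langle x^{i+j}v,v\rangle$ in the spanning set $v,xv,x^2v,\ldots$. Iterating the construction on the symplectic subquotient $U^\perp/(U\cap U^\perp)$, which inherits a nondegenerate symplectic form, would produce a sequence of summands. I would take $\mu$ to record the Jordan sizes of the pieces carrying a cyclic vector (the ``$v$-reached'' blocks) and $\nu$ to record the sizes of the remaining Jordan blocks appearing in the orthogonal decomposition. A dimension count, using that each subquotient in the recursion is symplectic hence even-dimensional, should give $|\mu|+|\nu|=n$, placing $\tau(v,x)$ in $\cQ_n$.

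Invariance of $\tau$ under $\Sp_{2n}$ is immediate from the definition, since $U$, $U^\perp$, the Jordan structure, and the form all transform equivariantly. For surjectivity, for each $(\mu,\nu)\in\cQ_n$ I would write down a model pair $(v_0,x_0)$ in a standard symplectic basis adapted to the bipartition: $\mu$ prescribes the Jordan structure on the $\C[x_0]$-submodule through $v_0$, while $\nu$ prescribes the structure on a symplectic complement.

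The main obstacle is injectivity. Given $(v,x)$ and $(v',x')$ with $\tau(v,x)=\tau(v',x')$, my plan is to first use the classification of $\Sp_{2n}$-orbits on $\cS\cap\cN$ to conjugate so that $x=x'$, and then apply a symplectic, $x$-equivariant version of Witt's extension theorem. The map $x^iv\mapsto x^iv'$ is a symplectic isometry of the cyclic subspaces by the Hankel-matrix observation, and should extend to an element of the symplectic centralizer of $x$. The delicate technical step, which I expect to require the most care, is controlling how the form pairs the Jordan blocks of $x$ --- distinguishing blocks that pair with themselves from those paired in twos --- and verifying that the partial $x$-equivariant isometry $U\to U'$ extends across the whole of $V$.
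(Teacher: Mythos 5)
The statement you are proving is not proved in the paper at all: it is quoted verbatim from Achar--Henderson \cite[Thm 6.1]{AH}, and the paper only records the consequence it needs, namely the explicit normal basis for a point $(v,x)$ in the orbit $\mO_{(\mu,\nu)}$ and the criterion of Theorem \ref{thm:t-ah}. Your overall architecture (an $\Sp_{2n}$-invariant map to $\cQ_n$, surjectivity via model pairs in a standard basis, injectivity via an $x$-equivariant symplectic Witt extension) is indeed the shape of the Achar--Henderson argument, so the route is not objectionable; but as written the proposal has genuine gaps at both of its load-bearing points.

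First, the invariant $\tau$ is misdefined. Since $x\in\cS$ is self-adjoint and the form is alternating, $\langle x^{i}v,x^{j}v\rangle=\langle x^{i+j}v,v\rangle=\langle v,x^{i+j}v\rangle=-\langle x^{i+j}v,v\rangle=0$, so your ``Hankel matrix'' is identically zero: $U=\C[x]v$ is isotropic, the restriction of the form to $U$ carries no information, $U\cap U^\perp=U$, and $v$ dies in the subquotient $U^\perp/U$, so the proposed iteration has no distinguished vector with which to continue. Moreover, sorting whole Jordan blocks into ``$v$-reached'' (recorded in $\mu$) versus ``the rest'' (recorded in $\nu$) is not the Achar--Henderson labelling and does not even obviously land in $\cQ_n$: here $\J(x)$ is a partition of $2n$ of the form $\lambda\cup\lambda$, and in the correct bipartition each halved block of size $\mu_i+\nu_i$ is split \emph{internally} into $\mu_i$ and $\nu_i$ according to how deep $v$ reaches into it (in the normal basis $v=\sum_i v_{i,\mu_i}$ meets every row); equivalently the invariant is read off from the pair $\J(x)$, $\J(x|_{V/\C[x]v})$ as in Theorem \ref{thm:t-ah}. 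Second, because the form vanishes on $U$, your isometry $U\to U'$ is trivially an isometry, and the entire content of injectivity is the extension step you defer --- producing a symplectic, $x$-commuting extension, i.e.\ constructing a normal basis adapted to $(v,x)$ and controlling how the form pairs Jordan blocks. That is precisely the substance of \cite[\S 6]{AH}, and acknowledging it as delicate does not close it. So the proposal is a reasonable outline of the known strategy, but both the definition of the orbit invariant and the orbit-separation argument remain open as stated.
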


More precisely, following Section 2 and Section 6 of \cite{AH} we can say that, given a bipartition $(\mu,\nu)\in\cQ_n$, the corresponding orbit $\mO_{(\mu,\nu)}$ contains the point $(v,x)$ if and only if there is a `normal' basis of $V$ given by $$\{v_{ij}, v_{ij}^{*} \, | \, 1 \leq i \leq \ell(\mu+\nu), 1 \leq j \leq \mu_i+\nu_i \},$$ with $\langle v_{ij},v^*_{i'j'}\rangle=\delta_{i,i'}\delta_{j,j'}$, $v=\sum_{i=1}^{\ell(\mu)}v_{i,\mu_i}$
and such that the action of $x$ on this basis is as follows: 
$$xv_{ij}=\begin{cases} v_{i,j-1}  & \mbox{if} \quad j \geq 2 \\ 0  &\mbox{if} \quad j=1 \end{cases} \quad \quad \quad xv_{ij}^{*}=\begin{cases} v_{i,j+1}^{*}  & \mbox{if} \quad j \leq \mu_i+\nu_i-1 \\ 0  &\mbox{if} \quad  j=\mu_i+\nu_i \end{cases},$$ in particular the Jordan type of $x$ is $(\mu+\nu)\cup(\mu+\nu)$.

\begin{definition}\label{def:exjordan}
Similarly to Definition \ref{def:jordan}, if $(v,x)\in\mO_{(\mu,\nu)}$, as defined above, we say that the bipartition $(\mu,\nu)$ is the \emph{exotic Jordan type} of $(v,x)$ and we denote it by $\eJ(v,x)=(\mu,\nu)$.
\end{definition}

Associated to a partition $\lambda=(\lambda_i)_i$ there is a Young diagram consisting of $\lambda_i$ boxes on row $i$. We say that $\lambda$ is the \emph{shape} of the diagram. In the same way, a bipartition gives a pair of diagrams. If a Young diagram is filled with positive integers, we call it a Young tableau, and similarly, if we fill a pair of diagrams with positive integers we call it a bitableau.

\begin{example}
The bipartition $(\mu,\nu)=((3,1),(2,2,1))$ corresponds to the pair of Young diagrams $\left(\young(~~~,::~),\yng(2,2,1)\right)$, notice that we follow the usual convention of left-justifying the boxes in the second diagram, however we instead right-justify the boxes in the first diagram (equivalenty, we apply a vertical reflexion to a usual diagram). The reason for this comes from the look of normal bases, as in Example \ref{example:boxes}.
An example of a bitableau in that shape would be $\left(\young(331,::2),\young(11,12,3)\right)$.
\end{example}
\begin{example} \label{example:boxes}
Consider the bipartition $(\mu,\nu)=((3,1),(2,2,1))$, and a point $(v,x)\in\mO_{(\mu,\nu)}$. Then we can represent the `normal' basis as

\begin{center}
\begin{tikzpicture}
\draw(0,0) node {$\young(\hfil\hfil\hfil,::\hfil)$};
\draw[-,line width=2pt] (0.7,0.47) to (0.7,-2.72);
\draw(1.2,-0.23) node {$\yng(2,2,1)$};	
\draw(1.2,-2) node {$\yng(1,2,2)$};
\draw(0,-2.24) node {$\young(::\hfil,\hfil\hfil\hfil)$};
\draw(-0.47,0.2) node {{\tiny $v_{11}$}};
\draw(0,0.2) node {{\tiny $v_{12}$}};
\draw(0.47,0.2) node {{\tiny $v_{13}$}};
\draw(0.94,0.2) node {{\tiny $v_{14}$}};
\draw(1.41,0.2) node {{\tiny $v_{15}$}};
\draw(0.47,-0.27) node {{\tiny $v_{21}$}};
\draw(0.94,-0.27) node {{\tiny $v_{22}$}};
\draw(1.41,-0.27) node {{\tiny $v_{23}$}};
\draw(0.94,-0.74) node {{\tiny $v_{31}$}};
\draw(0.94,-1.54) node {{\tiny $v^*_{31}$}};
\draw(0.47,-2.01) node {{\tiny $v^*_{23}$}};
\draw(0.94,-2.01) node {{\tiny $v^*_{22}$}};
\draw(1.41,-2.01) node {{\tiny $v^*_{21}$}};
\draw(-0.47,-2.48) node {{\tiny $v^*_{15}$}};
\draw(0,-2.48) node {{\tiny $v^*_{14}$}};
\draw(0.47,-2.48) node {{\tiny $v^*_{13}$}};
\draw(0.94,-2.48) node {{\tiny $v^*_{12}$}};
\draw(1.41,-2.48) node {{\tiny $v^*_{11}$}};
\end{tikzpicture}
\end{center}

with the action of $x$ given by moving one block left (and zero if there is nothing further left), and $v=v_{13}+v_{21}$ is given by the sum of the boxes just left of the dividing wall on the upper half of the diagram. Notice that the wall divides the two diagrams corresponding to the partitions that form the bipartition (the one on the left of the wall is facing backwards) and that the bipartition is repeated twice since $\J(x)=(\mu+\nu)\cup(\mu+\nu)$.
\end{example}

\subsection{Exotic Spaltenstein varieties}
We are now going to define the main object of study of this article and state our main conjecture about the irreducible components of exotic Spaltenstein varieties, which we will be able to prove in the case where $x^2=0$.

Let $\alpha=(\alpha_1,\alpha_2,\ldots,\alpha_m)\vDash n$ be a composition, we define a composition of $2n$ with $2m$ parts, $\hat{\alpha}=(\alpha_1,\alpha_2,\ldots, \alpha_m,\alpha_m,\alpha_{m-1},\ldots, \alpha_1)\vDash 2n$. We also define, $\check{\alpha}_i=\sum_{j=1}^i\hat{\alpha}_j$ for $i=0,\ldots,2m$. We denote by $\cF^\alpha(V)$ the variety of \emph{partial symplectic flags} of type $\alpha$ in $V$, that is each $F_\bullet\in\cF^\alpha(V)$ is a sequence of subspaces
$$ F_\bullet=(0=F_0\subseteq F_{\check{\alpha}_1}\subseteq \cdots\subseteq F_{\cha_m} \subseteq \cdots \subseteq F_{\cha_{2m-1}}\subseteq F_{\cha_{2m}}=V)$$
such that $\dim(F_{\cha_i}/F_{\cha_{i-1}})=\hat{\alpha}_i$ and $F_{\cha_i}^\perp=F_{\cha_{2m-i}}$ for all $1\leq i\leq 2m$. 

Notice that $F_{\cha_m}=F_n=F_n^{\perp}$ is a maximal isotropic subspace of $V$.

\begin{definition}
Given $(v,x)\in\fN$, and $\alpha=(\alpha_1,\ldots,\alpha_m)\vDash n$, we define the \emph{exotic Spaltenstein variety}
$$\cC^\alpha_{(v,x)}:=\{F_\bullet=(0\subseteq F_{\check{\alpha}_1}\subseteq\cdots\subseteq F_{\check{\alpha}_{2m}}=V)\in \cF^\alpha(V)~|~v\in F_n,~x(F_{\check{\alpha}_i})\subseteq F_{\check{\alpha}_{i-1}}\}.$$
\end{definition}
\begin{remark}
    Notice that for certain choices of $v$, $x$ and $\alpha$ the variety could be empty. It is clear that if $\eJ(v,x)=\eJ(v',x')$, then there is an isomorphism of varieties $\cC^\alpha_{(v,x)}\cong\cC^\alpha_{(v',x')}$ given by the $\Sp_{2n}$-action. When $\alpha=1^n$, then the partial flags are actually complete flags and this variety is an exotic Springer fiber, as defined by Kato in \cite{Kat} and studied by the authors in \cite{NRS1,NRS2}.
\end{remark}
Let $(v,x)\in\fN$, if $F_\bullet\in\cC^\alpha_{(v,x)}$, for all $i=0,\ldots,m$, since $F_{\cha_i}$ and $F_{\cha_i}^\perp=F_{\cha_{2m-i}}$ are both invariant under $x$, we can consider the restriction of $x$ to $F_{\cha_i}^\perp / F_{\cha_i}$, which is a symplectic vector space of dimension $2(n-\cha_i)$ (since $F_{\cha_i}$ is isotropic, the bilinear form descends to the quotient and is nondegenerate because we restrict to $F_{\cha_i}^\perp$). It can be easily verified that $x|_{F_{\cha_i}^\perp / F_{\cha_i}}\in\cN( F_{\cha_i}^\perp / F_{\cha_i})\cap\cS (F_{\cha_i}^\perp/F_{\cha_i})$.

\begin{definition}\label{def:phi}Let $\alpha=(\alpha_1,\alpha_2,\ldots,\alpha_m)\vDash n$ be a composition. Let $(v,x)\in\fN$, $\eJ(v,x)=(\mu,\nu)$.
Define the following maps,  
\begin{align*} \Phi^i:\cC^\alpha_{(v,x)}\to \cQ_{n-\cha_{i}} &\qquad F_\bullet \mapsto \eJ\left(v+F_{\cha_i}, x|_{F_{\cha_i}^\perp/F_{\cha_i}}\right)\qquad \text{$\forall$ $0\leq i\leq m$}; \\ \Phi:\cC^\alpha_{(v,x)}\to \prod_{i=m}^0\cQ_{n-\cha_i} &\qquad F_\bullet\mapsto \left((\emptyset,\emptyset)=\Phi^m(F_\bullet),\Phi^{m-1}(F_\bullet),\ldots,\Phi^{1}(F_\bullet),\Phi^0(F_\bullet)=(\mu,\nu)\right). \end{align*} \end{definition} 
This map assigns to each partial flag in the exotic Spaltenstein variety a sequence of bipartitions, such that the total size increases by $\alpha_i$ boxes at each step. Notice that in general there is no guarantee that the shape of $\Phi^{i}(F_\bullet)$ will contain the shape of $\Phi^{i+1}(F_\bullet)$. If the sequence of bipartitions is nested at each step, we can identify the sequence with a bitableau of shape $(\mu,\nu)$, where at each step $i=m-1,m-2,\ldots,0$ we fill the boxes that have been added with the number $m-i$.

\begin{example}\label{exa:tableaux}  
\begin{enumerate}
\item Let $n=2$, $(\mu,\nu)=\left(1^2,\emptyset\right)=\left(\yng(1,1),\emptyset\right)$, and $(v,x)\in\mO_{(\mu,\nu)}$, then we have a basis $\{v_{11},v_{21},v_{21}^*,v_{11}^*\}$ for $V$, with $v=v_{11}+v_{21}$. If $\alpha=(2)$, and $F_\bullet=(0\subseteq F_2\subseteq V)$ with $F_2=\C v_{11}+\C v_{21}$ (or any subspace of $V$ containing $v$), then $F_\bullet\in \cC^\alpha_{(v,x)}$, and since $F_2^\perp/F_2=0$ we have $\Phi(F_\bullet)=\left((\emptyset,\emptyset),\left(\yng(1,1),\emptyset\right)\right).$ This sequence is nested and corresponds to the bitableau $\left(\young(1,1),\emptyset\right)$.

\item With $(\mu,\nu)$ as above, if $\alpha=(1,1)$ and $F_\bullet=(0\subseteq F_1\subseteq F_2\subseteq F_1^\perp\subseteq V)$ with $F_1=\C v$, $F_2=\C v_{11}+\C v_{21}$, then since $v\in F_1$ we have that $v+F_1$ is zero in the quotient $F_1^\perp/F_1$, which is a $2$ dimensional space, while $F_2^\perp/F_2=0$, so
$\Phi(F_\bullet)=\left((\emptyset,\emptyset),(\emptyset,\yng(1)),\left(\yng(1,1),\emptyset\right)\right).$ This sequence of bipartitions is not nested, hence it does not correspond to a bitableau.

\item Now let $(\mu,\nu)=\left(\emptyset,2\right)=\left(\emptyset,\yng(2)\right)$, and $(v,x)\in\mO_{(\mu,\nu)}$, then we have a basis $\{v_{11},v_{12},v_{12}^*,v_{11}^*\}$ for $V$, and $v=0$. If $\alpha=(2)$, and $F_\bullet=(0\subseteq F_2\subseteq V)$ with $F_2=\C v_{11}+\C v_{12}^*$, then $F_\bullet\in \cC^\alpha_{(v,x)}$, and since $F_2^\perp/F_2=0$ we have $\Phi(F_\bullet)=\left((\emptyset,\emptyset),\left(\emptyset,\yng(2)\right)\right)$, which is a nested sequence corresponding to the bitableau $\left(\emptyset,\young(11)\right)$.
\end{enumerate}
\end{example}
\begin{definition}
Let $\alpha=(\alpha_1,\ldots,\alpha_m)\vDash n$, we say that a bitableau of shape $(\mu,\nu)\in \cQ_n$ is \emph{semistandard} with content $\alpha$ if for all $i=1,\ldots,m$ it contains $\alpha_i$ boxes with the number $i$, such that the numbers are strictly increasing along rows (right to left in the first tableau, left to right in the second tableau) and weakly increasing down columns. Equivalently, if we consider the bitableau as a nested sequence of bipartitions, it is semistandard if at each step we add a vertical strip to each of the two Young diagrams (no two boxes added are in the same row of the same diagram). 

We denote by $\cT^\alpha_{\mu,\nu}$ be the set of all semistandard bitableaux of shape $(\mu,\nu)$ and content $\alpha$.
\end{definition}
\begin{remark}
Notice that our convention is the transpose of the usual convention for semistandard tableaux (usually they are taken to be increasing strictly down columns and weakly along rows) but it matches the convention used in \cite{Ros12}. In Example \ref{exa:tableaux}, the bitableau in case (1) is semistandard, while the one in case (3) is not. 
\end{remark}
Let $\lambda=(\lambda_i)_i\vdash n$ be a partition, we define $N(\lambda)=\sum_{i}(i-1)\lambda_i$. If $(\mu,\nu)\in\cQ_n$ is a bipartition and $\alpha=(\alpha_1,\ldots,\alpha_m)\vDash n$ is a composition, we define
\begin{equation}\label{eq:d-alpha}d^{\alpha}_{(\mu,\nu)}:=2N(\mu+\nu)+|\nu|-\frac{1}{2}\sum_{i=1}^m(\alpha_i^2-\alpha_i).\end{equation}
\begin{remark}Recall that $N(\lambda)$ is the dimension of the Springer fibre over a nilpotent element of Jordan type $\lambda$, while when $\alpha=1^n$, we have $d^{(1^n)}_{\mu,\nu}=2N(\mu+\nu)+|\nu|$ is the dimension of the exotic Springer fibre over a pair $(v,x)\in\cN$ with $\eJ(v,x)=(\mu,\nu).$    
\end{remark}
\begin{conj}\label{conj:main}Let $T\in\cT^\alpha_{\mu,\nu}$ be a semistandard bitableau of shape $(\mu,\nu)$ and content $\alpha$, and let $(v,x)\in\mO_{(\mu,\nu)}$. Then $\overline{\Phi^{-1}(T)}$ is an irreducible component of $\cC^\alpha_{(v,x)}$ of dimension $d^\alpha_{\mu,\nu}$ and all irreducible components of dimension $d^\alpha_{\mu,\nu}$ are of this form. All other irreducible components of $\cC^\alpha_{(v,x)}$ have dimension strictly lower than $d^\alpha_{\mu,\nu}$.
\end{conj}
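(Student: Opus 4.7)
The plan is to proceed by induction on the length $m$ of $\alpha=(\alpha_1,\ldots,\alpha_m)$. The base case $m=1$ reduces to describing stable maximal isotropics of $V$ containing $v$, which can be handled by Lemma \ref{lem:dim-deg-grass} together with an analysis of the Jordan structure. For the inductive step, I would stratify $\cC^\alpha_{(v,x)}$ by the value of $\Phi^1(F_\bullet)\in\cQ_{n-\alpha_1}$: for each bipartition $(\mu',\nu')$, set
$$\cC^\alpha_{(v,x)}[(\mu',\nu')]:=\{F_\bullet\in\cC^\alpha_{(v,x)}~|~\Phi^1(F_\bullet)=(\mu',\nu')\}.$$
The projection $F_\bullet\mapsto F_{\cha_1}$ realises this stratum as a fibration over the variety $\cB_{(\mu',\nu')}$ of $\alpha_1$-dimensional isotropic subspaces $F\subseteq\ker x$ containing $v$ for which the induced pair $(v+F,\,x|_{F^\perp/F})$ lies in $\mO_{(\mu',\nu')}$, with fibres isomorphic to the exotic Spaltenstein variety $\cC^{(\alpha_2,\ldots,\alpha_m)}_{(v+F,\,x|_{F^\perp/F})}$ inside the symplectic space $F^\perp/F$ of dimension $2(n-\alpha_1)$. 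By the inductive hypothesis, the top-dimensional components of these fibres are indexed by $\cT^{(\alpha_2,\ldots,\alpha_m)}_{(\mu',\nu')}$ and have dimension $d^{(\alpha_2,\ldots,\alpha_m)}_{(\mu',\nu')}$.

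The second step is the key dimension estimate
$$\dim\cB_{(\mu',\nu')}+d^{(\alpha_2,\ldots,\alpha_m)}_{(\mu',\nu')}\;\leq\; d^\alpha_{(\mu,\nu)},$$
with equality precisely when the skew bi-shape $(\mu,\nu)/(\mu',\nu')$ is a pair of vertical strips of total size $\alpha_1$ distributed so as to be consistent with the position of $v$; that is, so that labelling the added boxes with $1$ produces a semistandard bitableau in $\cT^\alpha_{(\mu,\nu)}$. Once this inequality and its equality criterion are established, gluing the irreducible components of base and fibre along top-dimensional strata produces locally closed subvarieties whose closures are candidates for irreducible components of $\cC^\alpha_{(v,x)}$, and combinatorially this gluing is exactly prepending a semistandard $1$-labelled vertical strip to the bitableau coming from the fibre. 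One finally verifies that distinct semistandard bitableaux yield distinct closures---for instance by noting that $\Phi$ is constructible and generically constant on each top-dimensional stratum, so that $T$ can be recovered from a generic point of $\overline{\Phi^{-1}(T)}$---yielding the stated bijection with $\cT^\alpha_{(\mu,\nu)}$ and confirming that all other components have strictly smaller dimension.

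The principal obstacle is the computation of $\dim\cB_{(\mu',\nu')}$ together with the vertical-strip criterion for equality. When $x^2=0$---the case treated in the paper's main theorem---$\ker x$ coincides with $\im x\oplus H$ for a complement $H$ described directly by $\eJ(v,x)$, and an arbitrary isotropic $F$ containing $v$ is determined by the pair consisting of $F\cap\im x$ and the image of $F$ in $\ker x/\im x$; the change in exotic Jordan type upon passing to the quotient is then a two-term linear-algebraic calculation, yielding both the dimension count and the vertical-strip characterisation essentially at once. For general nilpotent $x$ one must instead track the interaction of $F$ with every term of the Jordan filtration $\ker x^k$ simultaneously with its relation to $v$, and the resulting change in exotic Jordan type should be governed by a Pieri-type rule for bipartitions analogous to, but more intricate than, the Type A rule implicit in \cite{Ros12}. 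Formulating this exotic Pieri rule and proving the sharp dimension bound above are the essential difficulties in extending the proof beyond $x^2=0$; a secondary difficulty is ruling out the possibility that non-vertical-strip transitions produce strata whose closures accidentally coincide with top-dimensional components, which should follow from the same constructibility argument sketched above but requires care.
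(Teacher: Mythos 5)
Your outline follows the paper's own strategy for Conjecture~\ref{conj:main}: induct on the number of parts of $\alpha$, project each stratum $\Phi^{-1}(B)$ to its first subspace $F_{\alpha_1}$, identify the fibres with strata of smaller exotic Spaltenstein varieties, and reduce everything to a dimension bound for the one-step base with equality exactly when a vertical strip is removed --- this is precisely the reduction to Conjecture~\ref{prop:B-irred}, and, like the paper, you can only indicate how to treat $x^2=0$. Within that shared plan, however, there are concrete errors and omissions. First, the base of the fibration is misdescribed: the image of $F_\bullet\mapsto F_{\alpha_1}$ lies in $\Gr^\perp_{\alpha_1}\bigl(\ker(x)\cap(\C[x]v)^\perp\bigr)$; the defining condition $v\in F_n$ does \emph{not} force $v\in F_{\alpha_1}$, and in fact whether $v\in F_{\alpha_1}$, and when it is not, how $v+F_{\alpha_1}$ meets $x(F_{\alpha_1}^\perp)$, is exactly what separates the possible values of $(\mu',\nu')$ in the paper's case analysis (Sections~\ref{subsub:v-in-Im}--\ref{subsub:v-not-ker}); imposing ``containing $v$'' on the base would discard most strata and is incompatible with, e.g., the case $(\mu,\nu)=(1^n,\emptyset)$ with $v\notin F_k$.

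Second, your key estimate omits half of what Conjecture~\ref{prop:B-irred} actually asserts: irreducibility of the base. For $\overline{\Phi^{-1}(T)}$ to be a \emph{single} irreducible component you need each stratum to be irreducible, hence (the fibres being irreducible by induction) the base $\cB^{((\mu,\nu),\alpha)}_{((\mu',\nu'),\alpha')}$ must be irreducible; your phrase ``gluing the irreducible components of base and fibre'' tacitly allows a reducible base, in which case one semistandard bitableau could produce several top-dimensional components and the stated bijection would fail. In the paper, establishing this irreducibility is the bulk of the $x^2=0$ work: the base decomposes into locally closed pieces $\cX(k_2,h)$ indexed not only by $k_2=\dim(F\cap\Im(x))$ and the position of $v$, but also by the extra invariant $h$ with $2h=\dim\bigl((F\cap\Im(x))/(F\cap x(F^\perp))\bigr)$, i.e.\ the rank of $\llangle~,~\rrangle$ on $F\cap\Im(x)$, and irreducibility follows only from the closure relations of Lemma~\ref{lem:Xinclosure} (and its analogues), which show the minimal-$h$ piece dominates. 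This also contradicts your claim that for $x^2=0$ the change of exotic Jordan type is ``determined by the pair $F\cap\Im x$ and the image of $F$ in $\ker x/\Im x$'': the invariant $h$ and the relation of $v$ to $F$ and to $x(F^\perp)$ genuinely enter (Lemmas~\ref{lem:vImvnotinFk} and \ref{lem:vnotImvnotinFk}, Section~\ref{sec:jordan-types}), so the ``two-term linear-algebraic calculation'' is in reality the case-by-case analysis of Section~\ref{sec:proof}, and it is exactly this analysis, not the inductive frame, that constitutes the proof in the $x^2=0$ case; beyond that case the statement remains open, as you correctly note.
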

\begin{remark}
In the case of complete flags, when $\alpha=1^n$, the variety is an exotic Springer fibre and the result is true (see \cite{NRS1}). In that case all irreducible components have the same dimension, but in general $\cC^\alpha_{(v,x)}$ is not pure dimensional, as the following example shows.
\end{remark}
\begin{example}\label{ex:nonpure}
Let $n=3$, $(\mu,\nu)=(\emptyset,21)$, $\alpha=(1,2)$, then $v=0$ and we have $F_\bullet\in \cC^\alpha_{(v,x)}$ if $F_\bullet=(0\subset F_1\subset F_3=F_3^\perp\subset F_1^\perp\subset V)$ with $F_1\in\ker(x)$ and $x(F_3)\subset F_1$. The subvariety of $\cC^\alpha_{(v,x)}$ where $F_1\subset\Im(x)$ is an irreducible component of dimension $d^\alpha_{(\mu,\nu)}=2(1)+3-\frac{1}{2}(1^2-1)-\frac{1}{2}(2^2-2)=2+3-0-1=4$, in fact it equals $\Phi^{-1}\left(\emptyset,\young(12,1)\right)$. The subvariety of $\cC^\alpha_{(v,x)}$ where $F_1\not\subset\Im(x)$ however is an irreducible locally closed subvariety, whose closure is an irreducible component of dimension $3$, for these flags the corresponding sequence of bipartitions is still nested, 
$$\Phi(F_\bullet)=\left((\emptyset,\emptyset),\left(\emptyset,\yng(2)\right),\left(\emptyset,\yng(2,1)\right)\right)=\left(\emptyset,\young(11,2)\right)$$
but the resulting bitableau is not semistandard.
\end{example}

\subsection{Strategy for the proof of Conjecture \ref{conj:main}}
We induct on $m$, the number of parts of the composition $\alpha=(\alpha_1,\alpha_2,\ldots,\alpha_m)\vDash n$. For $m=0$, then $n=0$ as well and the statement is trivial (all the varieties involved are one single point), so we can assume that $m\geq 1$.

Let $(\mu,\nu)\in\cQ_n$, $\alpha=(\alpha_1,\alpha_2,\ldots,\alpha_m)\vDash n$, $(v,x)\in\mO_{(\mu,\nu)}$, and let $B=((\mu^{[m]},\nu^{[m]}),\ldots,(\mu^{[0]},\nu^{[0]}))$ be a sequence of bipartitions such that $(\mu^{[i]},\nu^{[i]})\in\cQ_{n-\cha_i}$, for $i=m,\ldots,0$ and $(\mu^{[0]},\nu^{[0]})=(\mu,\nu)$. Then if $\emptyset\neq \Phi^{-1}(B)\subseteq \cC^\alpha_{v,x}$ we have a map
\begin{eqnarray*}
p:\Phi^{-1}(B)	&\longrightarrow& \Gr_{\alpha_1}^\perp (\ker(x) \cap (\C[x]v)^{\perp}); \\
	F_\bullet 	&\mapsto& F_{\alpha_1}.
\end{eqnarray*}
Define $(\mu',\nu')=(\mu^{[1]},\nu^{[1]})=\Phi^{1}(F_\bullet)=\eJ\left(v+F_{\alpha_1},x|_{F_{\alpha_1}^\perp/F_{\alpha_1}}\right)$, $\alpha'=(\alpha_2,\ldots,\alpha_m)$, and $B'=((\mu^{[m]},\nu^{[m]}),\ldots,(\mu^{[1]},\nu^{[1]}))$. We let
$$ \cB_{((\mu',\nu'),\alpha')}^{((\mu,\nu),\alpha)}:=\im p=\{F \in \Gr_{\alpha_1}^\perp (\ker(x) \cap (\C[x]v)^{\perp})\, | \,\eJ\left(v+F,x|_{F^\perp/F}\right)=(\mu',\nu')\, \}. $$
For any $F\in\cB_{((\mu',\nu'),\alpha')}^{((\mu,\nu),\alpha)}$, we then have that 
\begin{align}
\notag p^{-1}(F)&=\{F'_\bullet\in \Phi^{-1}(B)~|~F'_{\alpha_1}=F\} \\
\label{eq:p-proj} &\cong \left\{\overline{F}_\bullet\in\cC^{\alpha'}_{\left(v+F,x|_{F^\perp/F}\right)}\subseteq\cF^{\alpha'}(F^\perp/F)~|~(\Phi^{m-1}(\overline{F}_\bullet),\ldots,\Phi^0(\overline{F}_\bullet))=B'\right\} 
\end{align}
where the isomorphism  is given by 
$$(F'_0,F'_1,\ldots,F'_{2m-1},F'_{2m})\mapsto (\overline{F}_0,\overline{F}_1,\ldots,\overline{F}_{2m-3},\overline{F}_{2m-2});\quad \overline{F}_i=F'_{i+1}/F\quad\forall i=0,\ldots,2m-2.$$
In conclusion, the map $p$ is a fibre bundle, and from \eqref{eq:p-proj}, it follows that the fibres are $p^{-1}(F)\cong \Phi^{-1}(B')$, which by inductive hypothesis is an irreducible variety of dimension lesser or equal to $d^{\alpha'}_{\mu',\nu'}$, with equality if and only if the sequence $B'$ is actually a semistandard tableau. Thus Conjecture \ref{conj:main} would be a consequence of the following result.
\begin{conj}\label{prop:B-irred} For all $(\mu,\nu)\in\cQ_n$ and for all $(\mu',\nu')\in\cQ_{n-\alpha_1}$ 
the variety $ \cB_{((\mu',\nu'),\alpha')}^{((\mu,\nu),\alpha)}$ is irreducible, and we have 
$$\dim\cB_{((\mu',\nu'),\alpha')}^{((\mu,\nu),\alpha)}\leq d^\alpha_{(\mu,\nu)}-d^{\alpha'}_{(\mu',\nu')},$$
with equality if and only if $(\mu',\nu')$ is obtained from $(\mu,\nu)$ by removing $\alpha_1$ boxes in a vertical strip in each diagram (no two of the boxes are in the same row of the same diagram).
\end{conj}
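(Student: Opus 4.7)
The plan is to prove Conjecture~\ref{prop:B-irred} by an explicit analysis on the ambient space $W := \ker(x) \cap (\C[x]v)^\perp$, equipped with the induced (possibly degenerate) skew-symmetric form, and then stratifying the isotropic Grassmannian $\Gr^\perp_{\alpha_1}(W)$ by the exotic Jordan type $(\mu',\nu')$ of the quotient datum $(v+F,\, x|_{F^\perp/F})$. First I would use the normal basis $\{v_{ij}, v^*_{ij}\}$ associated to the orbit $\mO_{(\mu,\nu)}$ to obtain an explicit description
\[
W \;=\; \Span\{v_{i,1} \mid 1 \leq i \leq \ell(\mu+\nu)\} \;\oplus\; \Span\{v^*_{i,\mu_i+\nu_i} \mid 1 \leq i \leq \ell(\nu)\},
\]
compute the rank $r$ of the restricted skew-form on $W$ from the bipartition data, and invoke Lemma~\ref{lem:dim-deg-grass} to exhibit $\Gr^\perp_{\alpha_1}(W)$ as an irreducible projective variety of known dimension. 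The variety $\cB_{((\mu',\nu'),\alpha')}^{((\mu,\nu),\alpha)}$ sits inside this Grassmannian as a locally closed subset, so the task becomes to understand this subset and its closure.

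Next I would stratify $\cB$ by combinatorial data recording, for each row index $i$ of $\mu+\nu$, whether $F$ meets $\Span\{v_{i,1}\}$ (which in the quotient removes a box from row $i$ of $\mu$) and/or $\Span\{v^*_{i,\mu_i+\nu_i}\}$ (which removes a box from row $i$ of $\nu$), together with the off-diagonal mixing coefficients between these generators. Using the action of the stabilizer $Z_{\Sp_{2n}}(v,x)$, whose Lie algebra can be read directly off the normal basis, I expect each such stratum to be realised as an affine bundle over a product of lower-rank (degenerate) isotropic Grassmannians on the surviving row data, hence irreducible, with dimension explicitly computable from the combinatorial labels.

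The dimensional content of the conjecture then becomes a combinatorial identity: by the definition of $d^\alpha_{(\mu,\nu)}$ in \eqref{eq:d-alpha},
\[
d^\alpha_{(\mu,\nu)} - d^{\alpha'}_{(\mu',\nu')} \;=\; 2\bigl(N(\mu+\nu) - N(\mu'+\nu')\bigr) + \bigl(|\nu|-|\nu'|\bigr) - \tfrac{1}{2}(\alpha_1^2 - \alpha_1),
\]
and the task is to show that each stratum's dimension never exceeds this bound, with equality forcing the set of removed boxes to be a vertical strip in each of the two diagrams. The vertical-strip condition appears naturally because two removed boxes in the same row of $\mu+\nu$ would oblige $F$ to meet a single $1$-dimensional direction along two independent vectors, strictly reducing the Grassmannian dimension by exactly the deficit. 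Global irreducibility of $\cB$ would then follow by checking that the vertical-strip stratum is open and dense, which I would establish via a $1$-parameter degeneration inside $\Gr^\perp_{\alpha_1}(W)$ that deforms any non-vertical-strip configuration into a vertical-strip one while preserving the target bipartition $(\mu',\nu')$.

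The main obstacle is the quotient analysis itself: the exotic Jordan type of $(v+F,\, x|_{F^\perp/F})$ depends subtly on how $F$ interacts with both $\Im(x)$ and the cyclic subspace $\C[x]v$, as already illustrated in Example~\ref{ex:nonpure}, where a single rank-one condition on $F_1$ changes whether the associated bitableau is semistandard. Producing an explicit normal basis for the symplectic quotient $F^\perp/F$, and tracking precisely which row of $\mu$ and which row of $\nu$ loses a box for each configuration of $F$, is the technical heart of the argument. This is why the $x^2=0$ case of Theorem~\ref{thm:main} is tractable---there $\Im(x) \subseteq \ker(x)$ and the whole analysis linearises on a single layer---whereas in full generality one must handle nested interactions across multiple levels of the Jordan structure simultaneously, and it is at this step that a genuinely new geometric or representation-theoretic input is likely to be required.
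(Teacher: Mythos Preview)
This statement is a conjecture; the paper does not prove it in general, only in the special case $x^2=0$ (Theorem~\ref{thm:main}, via Sections~\ref{sec:jordan-types} and~\ref{sec:proof}). Your proposal is a sketch towards the general case and you honestly flag the gap at the end. However, the stratification you propose has a conceptual problem that already shows up in the $x^2=0$ case and would need to be repaired before any version of the argument can go through.

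The difficulty is that the exotic Jordan type of $(v+F,\,x|_{F^\perp/F})$ is \emph{not} governed by whether $F$ meets the specific coordinate lines $\C v_{i,1}$ or $\C v^*_{i,\mu_i+\nu_i}$. What actually controls the quotient type is a chain of coarser numerical invariants: the dimensions $k_j=\dim(F\cap\Im(x^{j-1}))-\dim(F\cap\Im(x^j))$ and, crucially, the quantities $2h_j=\dim\bigl((F\cap\Im(x^j))/(F\cap x^j(F^\perp))\bigr)$. The latter enter because of the identity $x^j(F^\perp)=(F\cap\Im(x^j))^{\pperp_j}$ (Lemma~\ref{lem:F-rad-j}), where $\pperp_j$ is orthogonality for the auxiliary nondegenerate form $\llangle\,,\,\rrangle_j$ on $\Im(x^j)$ defined in Section~\ref{sec:jordan-types}. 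The invariant $h_j$ has no interpretation as ``which rows $F$ meets''; it measures the symplectic rank of the restriction of $\llangle\,,\,\rrangle_j$ to $F\cap\Im(x^j)$. Your claim that two boxes removed from the same row force $F$ to contain two independent vectors in a single line is not what happens: in the paper's computation (e.g.\ \eqref{eq:diff-dd'-dim}), removing a horizontal pair of boxes corresponds to $h>0$, and the dimension deficit is $h(2n_1-2k_1+1)$, not a linear-algebra impossibility. Relatedly, for a fixed target $(\mu',\nu')$ the variety $\cB$ is \emph{not} a single stratum but a disjoint union of pieces $\cX(k_2,h)$ over several values of $h$, and the irreducibility argument requires showing each deeper piece lies in the closure of the shallowest one (Lemmas~\ref{lem:Xinclosure}, \ref{lem:XYclosure}), which is an honest degeneration argument rather than ``the good stratum is open''.

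So even to recover the $x^2=0$ case you would need to replace the row-by-row bookkeeping with the invariants $(k_2,h)$, introduce the form $\llangle\,,\,\rrangle$, and run the iterated fibre-bundle analysis via the maps $\pi_1^\ell$, $\pi_2^\ell$ of Section~\ref{sec:iter-bundle}. For general $x$, as you correctly anticipate and as the paper notes in Section~\ref{section:extend}, the obstruction is that the position of $v$ relative to the filtration $\cdots\subset F\cap x^j(F^\perp)\subset F\cap\Im(x^j)\subset F\cap x^{j-1}(F^\perp)\subset\cdots$ proliferates into too many cases to handle uniformly; neither you nor the paper has a mechanism to control this. (A smaller point: your explicit basis for $W=\ker(x)\cap(\C[x]v)^\perp$ is not correct when $\ell(\nu)<\ell(\mu)$, since then the pairing $\langle v^*_{i,\mu_i},v\rangle$ is nonzero for $\ell(\nu)<i\le\ell(\mu)$ and $W$ is cut out by an additional linear condition on those coordinates.)
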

In Section \ref{sec:proof}, we will prove that Conjecture \ref{prop:B-irred} holds when $x^2=0$, hence our main result for this paper is the following.
\begin{theorem} \label{thm:main} Let $\alpha\vDash n$, $(\mu,\nu)\in \cQ_n$, and $(v,x)\in\mO_{(\mu,\nu)}$, with $x^2=0$. For all $T\in\cT^\alpha_{\mu,\nu}$, $\overline{\Phi^{-1}(T)}$ is an irreducible component of $\cC^\alpha_{(v,x)}$ of dimension $d^\alpha_{\mu,\nu}$ and all irreducible components of dimension $d^\alpha_{\mu,\nu}$ are of this form. All other irreducible components of $\cC^\alpha_{(v,x)}$ have dimension strictly lower than $d^\alpha_{\mu,\nu}$.
\end{theorem}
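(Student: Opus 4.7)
The strategy is to execute the induction already set up in the preceding subsection: since the reduction of Theorem \ref{thm:main} to Conjecture \ref{prop:B-irred} is complete and the base case $m=0$ is trivial, it suffices to establish Conjecture \ref{prop:B-irred} under the additional hypothesis $x^2=0$. The condition $x^2=0$ forces every part of the partition $\mu+\nu$ to be at most $2$, and hence every row of the bipartition $(\mu,\nu)$ carries at most two boxes in total. This is the key geometric simplification underpinning the rest of the argument.

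I would begin by fixing a normal basis of $V$ adapted to $(v,x)$ as in Example \ref{example:boxes} and computing the ambient space $K := \ker(x) \cap (\C[x]v)^\perp$ explicitly. Self-adjointness of $x$ together with the identity $\C[x]v = \langle v, xv\rangle$ (valid because $x^2=0$) yields $K = \ker(x) \cap v^\perp$, and the radical of the restricted skew form on $K$ can be read off directly from the normal basis. By Lemma \ref{lem:dim-deg-grass}, $\Gr^\perp_{\alpha_1}(K)$ is an irreducible projective variety of known dimension, giving us an a priori upper bound on $\dim \cB_{((\mu',\nu'),\alpha')}^{((\mu,\nu),\alpha)}$. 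Next, I would stratify $\cB := \cB_{((\mu',\nu'),\alpha')}^{((\mu,\nu),\alpha)}$ by a combinatorial type recording, for each row of the bipartition $(\mu,\nu)$, how the subspace $F$ interacts with the two basis vectors attached to that row in the diagram of Example \ref{example:boxes} (in particular, whether $F$ uses a vector in $\im(x)$ or a complementary vector, and how it pairs with $v$). Because $x^2=0$ prevents any cascading of Jordan chains upon passage to the quotient $(v+F,\ x|_{F^\perp/F})$, these local invariants determine the exotic Jordan type $(\mu',\nu')$ and conversely. For each combinatorial type I would exhibit an explicit parametrization as an iterated affine bundle over a product of ordinary and isotropic Grassmannians, obtaining an irreducible stratum whose dimension is a transparent sum of contributions. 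Grouping strata by their $(\mu',\nu')$ image would then identify a single dense open stratum whose closure equals $\cB$, giving irreducibility.

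The dimension comparison, after substituting the formula \eqref{eq:d-alpha} for $d^{\alpha}_{(\mu,\nu)}$ and $d^{\alpha'}_{(\mu',\nu')}$ and using that $\mu+\nu$ has parts in $\{1,2\}$, reduces to a local contribution per row of $(\mu,\nu)$. Each row contributes at most a specific amount to $\dim \cB$, with equality precisely when no two boxes removed from $(\mu,\nu)$ to form $(\mu',\nu')$ lie in the same row of the same diagram, which is exactly the vertical-strip condition. The main obstacle will be the stratification step itself: correctly translating between the local choice of $F$ and the resulting exotic Jordan type of the quotient, while verifying that each combinatorial stratum is an iterated affine bundle of the claimed dimension. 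The pairing between a row of $\mu$ and the matching row of $\nu$ — which together form a single Jordan block of the doubled partition $(\mu+\nu)\cup(\mu+\nu)$ — generates several delicate sub-cases, but $x^2=0$ keeps the analysis to a single-step process rather than an iterative one, rendering the full classification tractable.
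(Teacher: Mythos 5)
Your proposal follows essentially the same route as the paper: reduce to the irreducibility and dimension bound for $\cB_{((\mu',\nu'),\alpha')}^{((\mu,\nu),\alpha)}$ via the fibre-bundle induction already set up, then (using that $x^2=0$ caps rows of $\mu+\nu$ at $2$) stratify this variety by how $F$ meets $\Im(x)$ and the radical $F\cap x(F^\perp)$ together with the position of $v$, realize each stratum as an iterated bundle over ordinary and isotropic Grassmannians, identify a dense top stratum, and match the dimension count against $d^\alpha_{(\mu,\nu)}-d^{\alpha'}_{(\mu',\nu')}$ to get the vertical-strip criterion. Two small cautions: the stratifying invariants should be the global dimensions $\dim(F\cap\Im(x))$, $\dim(F\cap x(F^\perp))$ and the conditions on $v$ (and $xv$), not row-by-row data relative to a fixed normal basis, which is not an invariant of $F$; and the asserted density of the maximal stratum is precisely the closure relation the paper establishes by an explicit one-parameter degeneration (its lemmas of the form $\cX(k_2+1,h+1)\subset\overline{\cX(k_2,h)}$), so it needs that argument rather than following from dimension counts alone.
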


\section{Computing Jordan types}\label{sec:jordan}
It will be important for us to compute the Jordan type of a linear transformation when restricting to (or quotienting by) invariant subspaces. Recall that if $V$ is any vector space (not necessarily symplectic) and $x\in \End(V)$ is a nilpotent linear transformation, then $\J(x)=\lambda$, where for all $j\geq 1$, $\sum_{k=1}^j\lambda_j^t=\dim(\ker(x^j))$. Here $\lambda^t$ denotes the transposed partition, with $\lambda_j^t=\#\{k~|~\lambda_k\geq j\}$. Equivalently, we have that the number of boxes in the $j$-th column of the Young diagram of $\J(x)$ is $\lambda_j^t=\dim(\ker(x^j))-\dim(\ker(x^{j-1}))$ for all $j\geq 1$.
\begin{lemma}\label{lem:general-res}
Let $0\subseteq U\subseteq W\subseteq V$ be subspaces, and let $x\in\End(V)$ be a nilpotent linear transformation with $x(U)\subseteq U$, $x(W)\subseteq W$. Then we can consider the restriction to the subquotient, and $\J(x|_{W/U})=\mu$ where
$$\mu_i^t=\dim(x^i(W)\cap U)+\dim(\ker(x^i)\cap W)-\dim(x^{i-1}(W)\cap U)-\dim(\ker(x^{i-1})\cap W)$$
\end{lemma}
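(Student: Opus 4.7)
The plan is to reduce the formula for $\mu_i^t$ to the standard characterization
$$\mu_i^t = \dim \ker\bigl((x|_{W/U})^i\bigr) - \dim \ker\bigl((x|_{W/U})^{i-1}\bigr)$$
recalled just before the lemma statement, and then compute $\dim \ker\bigl((x|_{W/U})^i\bigr)$ explicitly in terms of the original data on $V$. The subtraction will make the ``$-\dim U$'' terms cancel, leaving exactly the asserted formula.

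First, I would identify the kernel of the iterated map. An element $w + U \in W/U$ is annihilated by $(x|_{W/U})^i$ if and only if $x^i(w) \in U$ (using that $x(U) \subseteq U$ to make the restriction well-defined), and every such $w$ lies in $W$ since $x(W) \subseteq W$. Hence
$$\ker\bigl((x|_{W/U})^i\bigr) \;=\; \bigl(W \cap (x^i)^{-1}(U)\bigr)/U,$$
so that
$$\dim \ker\bigl((x|_{W/U})^i\bigr) \;=\; \dim\bigl(W \cap (x^i)^{-1}(U)\bigr) - \dim U.$$

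Next, I would apply the rank--nullity theorem to the linear map $x^i$ restricted to $W \cap (x^i)^{-1}(U)$, viewed as a map into $U$. Its kernel is precisely $\ker(x^i) \cap W$, and its image is exactly $x^i(W) \cap U$ (every element of $x^i(W) \cap U$ arises from some $w \in W$ with $x^i(w) \in U$, so $w$ lies in the source). This yields
$$\dim\bigl(W \cap (x^i)^{-1}(U)\bigr) \;=\; \dim\bigl(\ker(x^i) \cap W\bigr) + \dim\bigl(x^i(W) \cap U\bigr).$$

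Combining the two displays gives a closed form for $\dim \ker\bigl((x|_{W/U})^i\bigr)$, and subtracting the analogous expression for $i-1$ cancels the $\dim U$ contributions, producing the stated formula for $\mu_i^t$. There is no real obstacle here: the argument is elementary linear algebra, and the only point requiring mild care is verifying that $x|_{W/U}$ is well-defined (using $x(U) \subseteq U$ and $x(W) \subseteq W$) and that the image of $x^i$ on $W \cap (x^i)^{-1}(U)$ is all of $x^i(W) \cap U$ rather than a proper subspace.
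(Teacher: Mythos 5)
Your proposal is correct and follows essentially the same route as the paper's proof: identify $\ker\bigl((x|_{W/U})^i\bigr)$ with $\bigl(W \cap (x^i)^{-1}(U)\bigr)/U$, then apply rank--nullity (equivalently, the first isomorphism theorem, which is what the paper invokes) to the map $x^i : W \cap (x^i)^{-1}(U) \to U$ to express its dimension via $\ker(x^i)\cap W$ and $x^i(W)\cap U$, and finally subtract the $i$ and $i-1$ expressions so the $\dim U$ terms cancel.
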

\begin{proof}
We know that 
$\mu_i^t=\dim\left(\ker(x|_{W/U})^i\right)-\dim\left(\ker(x|_{W/U})^{i-1}\right)$.
Since $x^i(U)\subseteq U$, we have that $(x^i)^{-1}(U)\supseteq U$. It follows that  
$$ \ker(x|_{W/U})^i\simeq \left( (x^i)^{-1}(U)\cap W\right)/U$$
so
$$ \dim(\ker(x|_{W/U})^i)=\dim \left( (x^i)^{-1}(U)\cap W\right)-\dim (U).$$
But, considering the map $x^i:(x^i)^{-1}(U)\cap W\to U$ we get an isomorphism
$$ \left((x^i)^{-1}(U)\cap W\right)/(\ker(x^i)\cap W)\simeq x^i(W)\cap U$$
hence
$$ \dim \left((x^i)^{-1}(U)\cap W\right)=\dim(x^i(W)\cap U)+\dim(\ker(x^i)\cap W).$$
In conclusion, we have
\begin{align*}
\mu_i^t&=\dim\left(\ker(x|_{W/U})^i\right)-\dim\left(\ker(x|_{W/U})^{i-1}\right)\\
&= \left(\dim \left( (x^i)^{-1}(U)\cap W\right)-\dim (U)\right)-\left(\dim \left( (x^{i-1})^{-1}(U)\cap W\right)-\dim (U)\right)\\
&= \dim \left( (x^i)^{-1}(U)\cap W\right)-\dim \left( (x^{i-1})^{-1}(U)\cap W\right)\\
&= \dim(x^i(W)\cap U)+\dim(\ker(x^i)\cap W)-\dim(x^{i-1}(W)\cap U)-\dim(\ker(x^{i-1})\cap W).
\end{align*}
\end{proof}
\begin{lemma}\label{lem:jtype-quot}Let $x$ be a nilpotent transformation on a vector space $V$, and let $F\subseteq\ker(x)$. Define 
\begin{equation}\label{eq:adim}a_j=\dim (F\cap \Im(x^j))\quad j\geq 0.\end{equation}
Then $\J(x|_{V/F})$ is obtained from $\J(x)$ by removing $(a_{j-1}-a_j)$ boxes at the bottom of column $j$ for all $j\geq 1$. (Notice that these boxes form a vertical strip i.e. no two of them are in the same row)
\end{lemma}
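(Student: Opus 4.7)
The plan is to specialize Lemma \ref{lem:general-res} to the case $U=F$, $W=V$. Since $F\subseteq\ker(x)$, we have $x(F)=0\subseteq F$, so $F$ is $x$-invariant and the lemma applies. Setting $\lambda=\J(x)$, we have $\dim\ker(x^i)-\dim\ker(x^{i-1})=\lambda_i^t$. Also $x^i(V)\cap F=\Im(x^i)\cap F$ has dimension $a_i$ by definition, with the convention $a_0=\dim(F\cap V)=\dim F$. Substituting into Lemma \ref{lem:general-res} gives, for $\mu=\J(x|_{V/F})$,
$$\mu_i^t = a_i + \dim\ker(x^i) - a_{i-1} - \dim\ker(x^{i-1}) = \lambda_i^t - (a_{i-1}-a_i).$$
This is exactly the statement that column $i$ of $\mu$ has $(a_{i-1}-a_i)$ fewer boxes than column $i$ of $\lambda$, which translates combinatorially to removing $(a_{j-1}-a_j)$ boxes from the bottom of column $j$ of the diagram of $\lambda$.

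It remains to verify the parenthetical claim that the removed boxes form a vertical strip, i.e.\ no two removed boxes sit in the same row. Since the removed portion of column $j$ occupies rows $\mu_j^t+1,\ldots,\lambda_j^t$, the vertical strip condition is equivalent to $\mu_j^t\geq \lambda_{j+1}^t$ for each $j$, which rearranges to the inequality
$$a_{j-1}-a_j \;\leq\; \lambda_j^t - \lambda_{j+1}^t.$$
To prove this, I would use the containment $F\cap\Im(x^{j-1})\subseteq\ker(x)\cap\Im(x^{j-1})$ (valid because $F\subseteq\ker(x)$) and consider the natural map
$$F\cap\Im(x^{j-1})\;\hookrightarrow\;\ker(x)\cap\Im(x^{j-1})\;\twoheadrightarrow\;\bigl(\ker(x)\cap\Im(x^{j-1})\bigr)\big/\bigl(\ker(x)\cap\Im(x^j)\bigr).$$
Its kernel is $F\cap\Im(x^{j-1})\cap\Im(x^j)=F\cap\Im(x^j)$, so the induced map on the quotient is injective. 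The right-hand side has dimension $\lambda_j^t-\lambda_{j+1}^t$ (using the standard identification of $\dim(\ker(x)\cap\Im(x^{j-1}))$ with the number of Jordan blocks of length at least $j$), while the left-hand side has dimension $a_{j-1}-a_j$, yielding the required inequality.

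The main step is really just the bookkeeping in the first paragraph; the only thing requiring a genuine idea is the second, and even there the key observation — that the natural injection into $(\ker(x)\cap\Im(x^{j-1}))/(\ker(x)\cap\Im(x^j))$ factors through $F\cap\Im(x^j)$ — is essentially forced by the definitions. I therefore expect no serious obstacle; the statement will follow at once from Lemma \ref{lem:general-res} together with this one-paragraph linear-algebraic observation.
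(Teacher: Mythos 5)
Your proposal is correct and follows exactly the paper's route: the paper's proof is precisely the specialization of Lemma \ref{lem:general-res} to $U=F$, $W=V$ and the comparison with $\dim(\ker(x^j))-\dim(\ker(x^{j-1}))$, which is your first paragraph. Your second paragraph, proving the inequality $a_{j-1}-a_j\leq \lambda_j^t-\lambda_{j+1}^t$ via the injection of $(F\cap\Im(x^{j-1}))/(F\cap\Im(x^j))$ into $(\ker(x)\cap\Im(x^{j-1}))/(\ker(x)\cap\Im(x^j))$, is a correct verification of the parenthetical vertical-strip remark, which the paper states without proof.
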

\begin{proof}This follows directly from applying Lemma \ref{lem:general-res} to $U=F$, $W=V$ and comparing the result to $\dim(\ker(x^j))-\dim(\ker(x^{j-1}))$.
\end{proof}

\begin{lemma}\label{lem:jtype-res}
Let $x$ be a nilpotent transformation on a vector space $V$, and let $F\supseteq\Im(x)$. Define
\begin{equation}\label{eq:bdim}b_j=\dim\left(F+\ker(x^j)\right), \quad j\geq 0.\end{equation}
Then $\J(x|_F)$ is obtained from $\J(x)$ by removing $(b_j-b_{j-1})$ boxes at the bottom of column $j$ for all $j\geq 1$. (Notice that these boxes form a vertical strip i.e. no two of them are in the same row)
\end{lemma}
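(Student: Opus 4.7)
The plan mirrors the proof of Lemma \ref{lem:jtype-quot}, but applied to the ``complementary'' choice of subspaces inside $V$: I would invoke Lemma \ref{lem:general-res} with $U=0$ and $W=F$. This is legitimate because $F$ is $x$-invariant ($x(F)\subseteq\Im(x)\subseteq F$ by hypothesis), and $U=0$ is trivially invariant. Since $U=0$, the terms $\dim(x^i(W)\cap U)$ and $\dim(x^{i-1}(W)\cap U)$ vanish, and the formula of Lemma \ref{lem:general-res} specialises to
$$
\mu_i^t \;=\; \dim(\ker(x^i)\cap F)-\dim(\ker(x^{i-1})\cap F),
$$
where $\mu=\J(x|_F)$.

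Next I would compare column-by-column with $\J(x)$, whose $i$-th column has length $\lambda_i^t=\dim\ker(x^i)-\dim\ker(x^{i-1})$. The number of boxes removed from column $i$ is
$$
\lambda_i^t-\mu_i^t \;=\; \bigl[\dim\ker(x^i)-\dim(\ker(x^i)\cap F)\bigr]-\bigl[\dim\ker(x^{i-1})-\dim(\ker(x^{i-1})\cap F)\bigr].
$$
Applying the elementary identity $\dim A+\dim F=\dim(A+F)+\dim(A\cap F)$ to $A=\ker(x^i)$ and $A=\ker(x^{i-1})$ converts each bracket to $b_i-\dim F$ and $b_{i-1}-\dim F$ respectively; the $\dim F$ terms cancel and we obtain exactly $b_i-b_{i-1}$ boxes removed from the bottom of column $i$, as claimed.

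The only step requiring a little more thought is the vertical-strip assertion, which translates into the inequality $\mu_j^t\geq\lambda_{j+1}^t$ for every $j$. My plan is to observe that the usual injection induced by $x$, namely $\ker(x^{j+1})/\ker(x^j)\hookrightarrow\ker(x^j)/\ker(x^{j-1})$, actually lands in the smaller subspace $(\ker(x^j)\cap F + \ker(x^{j-1}))/\ker(x^{j-1})$, because $x$ has image inside $\Im(x)\subseteq F$. One easily checks that this subspace is naturally isomorphic to $(\ker(x^j)\cap F)/(\ker(x^{j-1})\cap F)$, which by the first step has dimension $\mu_j^t$. This forces $\lambda_{j+1}^t\leq\mu_j^t$, which is precisely the vertical-strip condition. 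I expect this last verification to be the main (minor) obstacle, but it is a direct consequence of the hypothesis $F\supseteq\Im(x)$.
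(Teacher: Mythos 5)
Your proof is correct and takes essentially the same approach as the paper: apply Lemma~\ref{lem:general-res} with $U=0$, $W=F$, then rewrite $\dim(\ker(x^j)\cap F)$ in terms of $b_j$ via $\dim(F+\ker(x^j))=\dim(F)+\dim(\ker(x^j))-\dim(F\cap\ker(x^j))$. Your closing paragraph, proving the vertical-strip assertion by showing that the injection $\ker(x^{j+1})/\ker(x^j)\hookrightarrow\ker(x^j)/\ker(x^{j-1})$ in fact lands in $(\ker(x^j)\cap F+\ker(x^{j-1}))/\ker(x^{j-1})\cong(\ker(x^j)\cap F)/(\ker(x^{j-1})\cap F)$ because $\Im(x)\subseteq F$, is a correct verification of something the paper only states parenthetically without proof.
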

\begin{proof}As in the previous proof, this follows directly from applying Lemma \ref{lem:general-res} to $U=0$, $W=F$ and using that $\dim(F+\ker(x^j))=\dim(F)+\dim(\ker(x^j))-\dim(F\cap \ker(x^j))$.
\end{proof}
\begin{lemma}\label{lem:ker-perp}
Let $V$ be a $2n$-dimensional symplectic space, and $x\in\End(V)$ such that $x\in \cS$. Then $\Im(x^j)=\ker(x^j)^\perp$ for all $j\geq 0$.
\end{lemma}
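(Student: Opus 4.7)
The plan is to establish the equality by proving one inclusion directly from the definition of $\cS$ and then matching dimensions via nondegeneracy of the symplectic form.

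First, I would observe that the condition $x\in\cS$, namely $\langle xv,w\rangle=\langle v,xw\rangle$ for all $v,w\in V$, is preserved under taking powers. A simple induction (or telescoping) gives
\[
\langle x^j v, w\rangle = \langle v, x^j w\rangle \qquad \text{for all } v,w\in V, \ j\geq 0.
\]
This is the only algebraic input needed from the hypothesis $x\in\cS$.

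Next, I would use this identity to prove the inclusion $\Im(x^j)\subseteq \ker(x^j)^\perp$. Given $u = x^j v \in \Im(x^j)$ and any $w\in \ker(x^j)$, we compute $\langle u,w\rangle = \langle x^j v, w\rangle = \langle v, x^j w\rangle = 0$, so $u\in\ker(x^j)^\perp$.

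Finally, equality follows by a dimension count. Since the symplectic form $\langle~,~\rangle$ on $V$ is nondegenerate, for any subspace $K\subseteq V$ we have $\dim K^\perp = 2n - \dim K$. Applying this to $K=\ker(x^j)$ and invoking rank–nullity, we get
\[
\dim \ker(x^j)^\perp = 2n - \dim\ker(x^j) = \dim \Im(x^j).
\]
Combined with the inclusion from the previous paragraph, this forces $\Im(x^j) = \ker(x^j)^\perp$. I do not anticipate any real obstacle here; the lemma is essentially a symplectic analogue of the familiar fact that for a self-adjoint operator on an inner product space the image is the orthogonal complement of the kernel, and the proof transfers verbatim once one notes that $\cS$ is exactly the condition of self-adjointness with respect to the symplectic pairing.
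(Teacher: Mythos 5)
Your proposal is correct and follows essentially the same route as the paper: prove $\Im(x^j)\subseteq\ker(x^j)^\perp$ using the self-adjointness of $x^j$ (inherited from $x\in\cS$), then conclude equality by a dimension count. You are merely more explicit about the induction giving $\langle x^j v, w\rangle = \langle v, x^j w\rangle$ and about the rank--nullity/nondegeneracy argument, both of which the paper leaves implicit.
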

\begin{proof}
Let $w\in \Im(x^j)$, so $w=x^ju$ for some $u\in V$. If $z\in \ker(x^j)$, then 
$\langle w,z\rangle=\langle x^ju,z\rangle=\langle u,x^jz\rangle=0$
so $\Im(x^j)\subseteq\ker(x^j)^\perp$ and equality follows from the fact that they have the same dimension.
\end{proof}
\begin{lemma}
Let $V$ be a $2n$-dimensional symplectic space, and $x\in\cS\cap\cN$. 
Suppose $F\subseteq\ker(x)$, which is equivalent to $F^\perp\supseteq\Im(x)$, then
$$ \J(x|_{V/F})=\J(x|_{F^\perp}).$$
\end{lemma}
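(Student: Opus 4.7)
My plan is to apply both of the previous box-counting lemmas (Lemma \ref{lem:jtype-quot} and Lemma \ref{lem:jtype-res}) to the two sides and then use the symplectic duality from Lemma \ref{lem:ker-perp} to match the vertical strips being removed column-by-column.

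First I would observe that the hypothesis $F \subseteq \ker(x)$, together with Lemma \ref{lem:ker-perp} (applied with $j=1$), gives $F^\perp \supseteq \ker(x)^\perp = \Im(x)$, so $F^\perp$ is a valid choice of ``$F$'' in Lemma \ref{lem:jtype-res}. Thus Lemma \ref{lem:jtype-quot} tells me that $\J(x|_{V/F})$ is obtained from $\J(x)$ by removing $a_{j-1} - a_j$ boxes from the bottom of column $j$, with $a_j = \dim(F \cap \Im(x^j))$, while Lemma \ref{lem:jtype-res} says that $\J(x|_{F^\perp})$ is obtained from $\J(x)$ by removing $b_j - b_{j-1}$ boxes from column $j$, with $b_j = \dim(F^\perp + \ker(x^j))$. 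It therefore suffices to prove the identity $a_{j-1} - a_j = b_j - b_{j-1}$ for every $j \geq 1$.

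The key computation, which I expect to be the main (only) nontrivial step, is to show that $b_j = 2n - a_j$. To do this, I would use the standard identity $\dim(F^\perp + \ker(x^j)) = \dim(F^\perp) + \dim(\ker(x^j)) - \dim(F^\perp \cap \ker(x^j))$, then rewrite $F^\perp \cap \ker(x^j) = F^\perp \cap \Im(x^j)^\perp = (F + \Im(x^j))^\perp$ using Lemma \ref{lem:ker-perp}, so that
\[
\dim(F^\perp \cap \ker(x^j)) = 2n - \dim F - \dim \Im(x^j) + \dim(F \cap \Im(x^j)) = 2n - \dim F - \dim \Im(x^j) + a_j.
\]
Substituting this back and using the rank-nullity identity $\dim \ker(x^j) + \dim \Im(x^j) = 2n$ collapses the expression to $b_j = 2n - a_j$.

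Taking differences then gives $b_j - b_{j-1} = a_{j-1} - a_j$, so the two Jordan types remove the same vertical strip from $\J(x)$ column by column and must coincide. The only subtlety to watch is the $j=0$ edge case (both $a_0 = \dim F$ and $b_0 = \dim F^\perp$ satisfy $b_0 = 2n - a_0$, so the recursion starts correctly), but otherwise the argument is a direct duality calculation.
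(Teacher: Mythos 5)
Your proposal is correct and follows essentially the same route as the paper: apply Lemma \ref{lem:jtype-quot} and Lemma \ref{lem:jtype-res}, then use Lemma \ref{lem:ker-perp} to deduce $a_{j-1}-a_j=b_j-b_{j-1}$. The paper gets this identity in one step from $(F\cap\Im(x^j))^\perp=F^\perp+\ker(x^j)$, i.e.\ $a_j=2n-b_j$, which is exactly the relation you derive by inclusion--exclusion and rank--nullity.
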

\begin{proof}Let $(a_j)_j$ be defined as in \eqref{eq:adim} for $F$, and $(b_j)_j$ be defined as in \eqref{eq:bdim} for the space $F^\perp$. Then $ (F\cap \im(x^j))^\perp=F^\perp+\ker(x^j)$ hence we have
\begin{align*}a_{j-1}-a_j&=\dim (F\cap \im(x^{j-1}))-\dim (F\cap \im(x^j)) \\
&=(2n-\dim(F^\perp+\ker(x^{j-1})))-(2n-\dim(F^\perp+\ker(x^j)))\\
&= b_j-b_{j-1}
\end{align*}
and the result then follows from Lemma \ref{lem:jtype-quot} and Lemma \ref{lem:jtype-res}.
\end{proof}
If $0\subseteq F\subseteq F^\perp\subseteq V$ is an isotropic subspace of $V$, and $(v,x)\in\fN$, in order to compute $\eJ\left(v+F,x|_{F^\perp/F}\right)$ we will need to use the following result due to Travkin \cite[Theorem 1 and Corollary 1]{Trav09} and Achar-Henderson \cite[Theorem 6.1]{AH}. 
\begin{theorem}\label{thm:t-ah}If $(v,x)\in\fN(W)$, then $\eJ(v,x)=(\mu,\nu)$ if and only if
$$\J\left(x|_W\right)=(\mu+\nu)\cup(\mu+\nu)=(\mu_1+\nu_1,\mu_1+\nu_1, \mu_2+\nu_2,\mu_2+\nu_2,...)$$
and
$$\J\left(x|_{W/\C[x]v}\right)=(\mu_1+\nu_1,\mu_2+\nu_1, \mu_2+\nu_2,\mu_3+\nu_2,...).$$
\end{theorem}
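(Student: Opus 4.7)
The plan is to establish both directions simultaneously by a direct computation on the normal basis described in the paragraph preceding Definition \ref{def:exjordan}, combined with the observation that the pair of Jordan types is a complete invariant of the bipartition $(\mu,\nu)$. Since the symplectic group orbits $\mO_{(\mu,\nu)}$ partition $\fN(W)$, the theorem will follow once I prove: (a) for every $(v,x)\in\mO_{(\mu,\nu)}$ the two Jordan types take the claimed form, and (b) the assignment $(\mu,\nu)\mapsto(\J(x|_W),\J(x|_{W/\C[x]v}))$ is injective on bipartitions.

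For (a), fix $(v,x)\in\mO_{(\mu,\nu)}$ and work in the normal basis $\{v_{ij},v_{ij}^*\}$. The explicit formulas for the $x$-action exhibit $W$ as a direct sum of Jordan chains: for each $i$, the chain spanned by $v_{i,1},\ldots,v_{i,\mu_i+\nu_i}$ and the chain spanned by $v_{i,1}^*,\ldots,v_{i,\mu_i+\nu_i}^*$, each of length $\mu_i+\nu_i$. Hence $\J(x|_W)=(\mu+\nu)\cup(\mu+\nu)$ by inspection. For the second Jordan type, $\C[x]v$ is the cyclic module generated by $v=\sum_{i\leq\ell(\mu)}v_{i,\mu_i}$, with basis $\{x^k v:0\leq k<\mu_1\}$, and is therefore a single Jordan block of $x$ of size $\mu_1$. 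Applying Lemma \ref{lem:general-res} to $0\subseteq\C[x]v\subseteq W$ yields
\[
\J(x|_{W/\C[x]v})_i^t = 2\beta_i - \bigl(\dim(x^{i-1}(W)\cap\C[x]v)-\dim(x^i(W)\cap\C[x]v)\bigr),
\]
where $\beta_i=(\mu+\nu)^t_i$. The combinatorial heart is then the computation of $\dim(x^i(W)\cap\C[x]v)$ in the normal basis: writing an element as $\sum_{k=0}^{\mu_1-1}c_k x^k v$ and expanding $x^k v=\sum_{j:\mu_j>k}v_{j,\mu_j-k}$, the condition that this element lie in $x^i(W)$ decomposes chain by chain and forces $c_k=0$ exactly when there exists $j\leq\ell(\mu)$ with $\mu_j\geq k+1$ and $\nu_j<i-k$, equivalently $\mu^t_{k+1}>\nu^t_{i-k}$. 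Substituting this count into the displayed formula and simplifying in terms of transpose partitions should yield $\J(x|_{W/\C[x]v})=(\mu_1+\nu_1,\mu_2+\nu_1,\mu_2+\nu_2,\mu_3+\nu_2,\ldots)$.

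For (b), the parts of $(\mu+\nu)\cup(\mu+\nu)$ recover $\mu+\nu$ by keeping every other part. The sequence $(\mu_1+\nu_1,\mu_2+\nu_1,\mu_2+\nu_2,\mu_3+\nu_2,\ldots)$ has odd-indexed parts $\mu_i+\nu_i$ and even-indexed parts $\mu_{i+1}+\nu_i$, and the differences
\[
(\mu_i+\nu_i)-(\mu_{i+1}+\nu_i)=\mu_i-\mu_{i+1},\qquad (\mu_{i+1}+\nu_i)-(\mu_{i+1}+\nu_{i+1})=\nu_i-\nu_{i+1}
\]
together with the asymptotic conditions $\mu_i,\nu_i\to 0$ recover $\mu$ and $\nu$ uniquely. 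Combining (a) and (b) closes both implications: if $(v,x)$ has the given Jordan types, then $(v,x)\in\mO_{(\mu',\nu')}$ for some bipartition $(\mu',\nu')$ whose predicted Jordan types from (a) must coincide with the given ones, which by injectivity forces $(\mu',\nu')=(\mu,\nu)$.

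The main obstacle is the combinatorial bookkeeping in step (a). The difficulty is that $\C[x]v$, although a single Jordan block for $x$, is not a direct summand of the ambient Jordan decomposition, so the intersection $\C[x]v\cap x^i(W)$ cannot simply be read off and must be extracted chain by chain. Translating the resulting vanishing locus on the coefficients $(c_0,\ldots,c_{\mu_1-1})$ into the claimed column lengths of the mixed partition $(\mu_1+\nu_1,\mu_2+\nu_1,\ldots)$ is the most delicate point; it should reduce to a careful rearrangement of the transpose inequalities $\mu^t_{k+1}>\nu^t_{i-k}$, but verifying this matches the column sequence in full generality is where the argument earns its keep.
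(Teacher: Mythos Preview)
The paper does not actually prove this theorem; it is quoted as a known result and attributed to Travkin \cite[Theorem~1 and Corollary~1]{Trav09} and Achar--Henderson \cite[Theorem~6.1]{AH}. So there is no in-paper argument to compare against, and your proposal is an attempt to supply a self-contained proof where the authors simply cite the literature.

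Your logical structure is sound: part (b) is correct and complete, and the first Jordan type in part (a) is immediate from the normal basis. The setup for the second Jordan type is also correct --- $\C[x]v$ really is a single Jordan block of length $\mu_1$, and applying Lemma~\ref{lem:general-res} with $U=\C[x]v$, $W=W$ gives exactly the displayed formula. Your description of which coefficients $c_k$ are forced to vanish, and the translation into the inequality $\mu^t_{k+1}>\nu^t_{i-k}$, are both right.

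The gap, which you honestly flag, is that you do not carry out the final identification. Concretely, writing $\gamma=(\mu_1+\nu_1,\mu_2+\nu_1,\mu_2+\nu_2,\ldots)$ and $\beta_i=(\mu+\nu)^t_i$, what remains is the identity
\[
2\beta_i-\gamma^t_i \;=\; \#\{j:\mu_{j+1}+\nu_j<i\leq\mu_j+\nu_j\}
\;=\; Z_i-Z_{i-1},
\]
where $Z_i=\#\{k\geq 0:\mu^t_{k+1}>\nu^t_{i-k}\}$ counts the forced vanishings at level $i$. The first equality is a direct count; the second is the substantive combinatorial step. It is true and can be proved by showing that $Z_i=\#\{j:\mu_{j+1}+\nu_j<i\}$ via the bijection $k\mapsto j=\nu^t_{i-k}+1$ (check that the constraints $\mu^t_{k+1}>\nu^t_{i-k}$ and $\mu_{j+1}+\nu_j<i$ correspond), but until you write this out the proof is incomplete. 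This is a bookkeeping verification rather than a conceptual obstacle, so your approach does go through once fleshed out; it is, however, a genuinely different route from what the cited references do, which rely on the broader orbit-closure and enhanced/exotic nilpotent cone machinery rather than a bare-hands column count.
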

We want to apply this theorem to $W=F^\perp/F$, so we will need the next two lemmas, that tell us how to find $\J\left(x|_W\right)=\J\left(x|_{F^\perp/F}\right)$ and $\J\left(x|_{W/\C[x]v}\right)=\J\left(x|_{F^\perp/(F+\C[x]v)}\right)$.

\begin{lemma}\label{lem:FcapFperp}Let $V$ be a symplectic vector space, $x\in \cS\cap\cN$, and $0\subseteq F\subseteq F^\perp\subseteq V$ with $x(F)=0$ and $x(F^\perp)\subseteq F^\perp$. 
Let
$$ a_j'=\dim(F\cap x^{j}(F^\perp)), \quad j\geq 0$$
then $\J(x|_{F^\perp/F})$ is obtained from $\J(x|_{V/F})=\J(x|_{F^\perp})$ by removing $(a'_{j-1}-a'_j)$ boxes from the bottom of column $j$ for all $j\geq 1$.
\end{lemma}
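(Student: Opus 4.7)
The plan is to recognize this lemma as a direct specialization of Lemma \ref{lem:jtype-quot} applied to the restricted transformation on the invariant subspace $F^{\perp}$. Specifically, I would consider $F^\perp$ as the ambient vector space (which is $x$-invariant by hypothesis) and take $x|_{F^\perp}$ as the nilpotent endomorphism. The subspace $F$ sits inside $\ker(x|_{F^\perp})$ because $x(F)=0$ by hypothesis, so the setup of Lemma \ref{lem:jtype-quot} applies verbatim.

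Under this specialization, the quantity $a_j$ appearing in Lemma \ref{lem:jtype-quot} becomes
\[
a_j = \dim\bigl(F \cap \Im((x|_{F^\perp})^j)\bigr) = \dim\bigl(F \cap x^j(F^\perp)\bigr) = a'_j,
\]
since the image of $(x|_{F^\perp})^j$ is exactly $x^j(F^\perp)$. Lemma \ref{lem:jtype-quot} then tells us that $\J\bigl((x|_{F^\perp})|_{F^\perp/F}\bigr) = \J(x|_{F^\perp/F})$ is obtained from $\J(x|_{F^\perp})$ by removing $(a'_{j-1}-a'_j)$ boxes at the bottom of column $j$ for each $j \geq 1$, which is precisely the claim. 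The identification $\J(x|_{V/F})=\J(x|_{F^\perp})$ in the statement is just the previous (unnumbered) lemma.

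There is no real obstacle here, since the work has already been done in Lemma \ref{lem:jtype-quot}; the only thing to verify is that the hypotheses are met, namely that $x|_{F^\perp}$ is a well-defined nilpotent endomorphism of $F^\perp$ (which follows from $x(F^\perp)\subseteq F^\perp$ and the fact that $x$ itself is nilpotent) and that $F \subseteq \ker(x|_{F^\perp})$ (which follows from $x(F)=0$ together with $F \subseteq F^\perp$). So the proof reduces to a single sentence citing Lemma \ref{lem:jtype-quot} after making these identifications explicit.
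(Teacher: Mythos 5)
Your proposal is correct and is essentially the paper's own argument: the authors likewise prove this by applying Lemma \ref{lem:jtype-quot} to $x|_{F^\perp}$ on the ambient space $F^\perp$, using the identification $\dim(F\cap\Im(x|_{F^\perp})^j)=\dim(F\cap x^j(F^\perp))=a'_j$ (they also note the alternative of invoking Lemma \ref{lem:general-res} with $U=F$, $W=F^\perp$, but that is not needed). Your verification of the hypotheses ($F\subseteq\ker(x|_{F^\perp})$ and invariance of $F^\perp$) is exactly the right bookkeeping.
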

\begin{proof}
We can either apply Lemma \ref{lem:jtype-quot} to the quotient $F^\perp\ronto F^\perp/F$ and look at the dimensions
$$ \dim(F\cap\im(x|_{F^\perp})^j)=\dim(F\cap x^j(F^\perp))$$
to get the result, or use Lemma \ref{lem:general-res} (with $U=F$, $W=F^\perp$).
\end{proof}
\begin{lemma}\label{lem:FcapFperpCv}
Let $V$ be a symplectic vector space, $(v,x)\in \fN$, and $0\subseteq F\subseteq F^\perp\subseteq V$ with $x(F)=0$, $x(F^\perp)\subseteq F^\perp$, $v\in F^\perp$.
Let
$$ b_j'=\dim((F+\C[x]v)\cap x^{j}(F^\perp)), \quad j\geq 0$$
then $\J(x|_{F^\perp/F+\C[x]v})$ is obtained from $\J(x|_{V/F})=\J(x|_{F^\perp})$ by removing $(b'_{j-1}-b'_j)$ boxes from the bottom of column $j$ for all $j\geq 1$.
\end{lemma}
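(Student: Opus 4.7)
The plan is to mimic the proof of Lemma \ref{lem:FcapFperp} essentially verbatim, replacing the $x$-stable subspace $F$ by the larger $x$-stable subspace $F+\C[x]v$. First I would verify the hypotheses needed to apply Lemma \ref{lem:general-res} with $W=F^\perp$ and $U=F+\C[x]v$: since $F\subseteq F^\perp$, and since $v\in F^\perp$ together with $x(F^\perp)\subseteq F^\perp$ force $\C[x]v\subseteq F^\perp$, we have $U\subseteq W$; moreover $x(F)=0$ and $x(\C[x]v)\subseteq \C[x]v$ give $x(U)\subseteq U$, so $U$ is $x$-stable.

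Lemma \ref{lem:general-res} then expresses the column lengths of $\mu:=\J(x|_{F^\perp/(F+\C[x]v)})$ as
$$\mu_j^t = \dim(x^j(F^\perp)\cap(F+\C[x]v))+\dim(\ker(x^j)\cap F^\perp)-\dim(x^{j-1}(F^\perp)\cap(F+\C[x]v))-\dim(\ker(x^{j-1})\cap F^\perp),$$
which by the definition of $b'_j$ simplifies to $\mu_j^t=b'_j-b'_{j-1}+\dim(\ker(x^j)\cap F^\perp)-\dim(\ker(x^{j-1})\cap F^\perp)$. Applying Lemma \ref{lem:general-res} a second time with $W=F^\perp$ and $U=0$ (or reading off directly from the Jordan-type formula recalled at the start of Section \ref{sec:jordan}), the column lengths of $\lambda:=\J(x|_{F^\perp})$ are $\lambda_j^t=\dim(\ker(x^j)\cap F^\perp)-\dim(\ker(x^{j-1})\cap F^\perp)$. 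Subtracting telescopes the kernel contributions and produces $\lambda_j^t-\mu_j^t=b'_{j-1}-b'_j$.

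Finally, since $x^j(F^\perp)\subseteq x^{j-1}(F^\perp)$ we have $b'_j\leq b'_{j-1}$, so this difference is nonnegative; because both $\lambda$ and $\mu$ are genuine partitions with $\mu_j^t\leq \lambda_j^t$ for every $j$, the Young diagram of $\mu$ is contained in that of $\lambda$ and the removed boxes necessarily lie at the bottom of column $j$, which is exactly the assertion of the lemma. This really is a one-line variation on Lemma \ref{lem:FcapFperp}: the only thing to check is that $F+\C[x]v$ remains $x$-stable and contained in $F^\perp$, which is automatic from the hypotheses, so I do not anticipate any serious obstacle.
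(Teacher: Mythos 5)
Your proposal is correct and is essentially the paper's own argument: the proof there is exactly a one-line application of Lemma \ref{lem:general-res} with $W=F^\perp$ and $U=F+\C[x]v$, with the $\ker$ terms cancelling against the column lengths of $\J(x|_{F^\perp})$ just as you describe. Your additional verifications (that $\C[x]v\subseteq F^\perp$, that $U$ is $x$-stable, and that $b'_j\leq b'_{j-1}$) are the routine checks the paper leaves implicit.
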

\begin{proof}
This follows by applying Lemma \ref{lem:general-res} to $U=\F+\C[x]v\subseteq F^\perp=W$.
\end{proof}

\section{Computing $\J\left(x|_{F^\perp/F}\right)$, $\J\left(x|_{F^\perp/F+\C[x]v}\right)$ and $\eJ\left(v+F,x|_{F^\perp/F}\right)$}\label{sec:jordan-types}
For this section, we fix $x\in\fN\cap\cS$ and we apply the results of Section \ref{sec:jordan}. In particular we will look at the special case where $x^2=0$. We start with some general lemmas that we will need later.

\begin{definition}
For each $j\geq 0$, we define a map $\Im(x^j)\times \Im(x^j)\to \C$ denoted by $\llangle~,~\rrangle_j$, as follows. If $u,w\in \Im(x^j)$, let $z\in V$ such that $u=x^j z$, then
$$\llangle u,w\rrangle_j:=\langle z,w\rangle.$$
\end{definition}
\begin{lemma}
For each $j\geq 0$, the pairing $\llangle~,~\rrangle_j$ is well defined, and it is a nondegenerate skew-symmetric bilinear form on $\Im(x^j)$ (which is an even dimensional space).
\end{lemma}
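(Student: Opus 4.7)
The plan is to verify the four required properties in sequence: well-definedness, bilinearity, skew-symmetry, and nondegeneracy, each of which reduces to a direct application of Lemma \ref{lem:ker-perp} together with the defining property of $\cS$.

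\textbf{Well-definedness.} First I would check that $\llangle u,w\rrangle_j$ does not depend on the choice of $z$ with $u=x^jz$. If $u=x^jz=x^jz'$, then $z-z'\in\ker(x^j)$, so I need $\langle z-z',w\rangle=0$ for every $w\in\Im(x^j)$. By Lemma \ref{lem:ker-perp}, $\Im(x^j)=\ker(x^j)^\perp$, and since the symplectic form is nondegenerate we have $(W^\perp)^\perp=W$, so $\ker(x^j)=\Im(x^j)^\perp$. Hence the pairing is independent of the representative $z$. Bilinearity in each variable is immediate from the bilinearity of $\langle~,~\rangle$.

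\textbf{Skew-symmetry.} Writing $u=x^jz$ and $w=x^jy$, and using the defining relation $\langle xv,w\rangle=\langle v,xw\rangle$ for $x\in\cS$ iteratively $j$ times, I get
\[
\llangle u,w\rrangle_j=\langle z,x^jy\rangle=\langle x^jz,y\rangle=\langle u,y\rangle=-\langle y,u\rangle=-\llangle w,u\rrangle_j.
\]

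\textbf{Nondegeneracy.} Suppose $\llangle u,w\rrangle_j=0$ for every $w\in\Im(x^j)$, and write $u=x^jz$. Then $\langle z,w\rangle=0$ for all $w\in\Im(x^j)=\ker(x^j)^\perp$, so $z\in\bigl(\ker(x^j)^\perp\bigr)^\perp=\ker(x^j)$, which gives $u=x^jz=0$. Finally, since $\Im(x^j)$ carries a nondegenerate skew-symmetric bilinear form, its dimension must be even.

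None of these steps is an obstacle; the whole argument hinges on Lemma \ref{lem:ker-perp} together with $(W^\perp)^\perp=W$, and on the adjoint-type identity defining $\cS$ applied $j$ times. The only conceptual point worth emphasizing in the writeup is that well-definedness and nondegeneracy are two sides of the same fact: the equality $\Im(x^j)=\ker(x^j)^\perp$, which is precisely what makes the pairing descend from $\langle~,~\rangle$ on $V$ to a symplectic form on $\Im(x^j)$.
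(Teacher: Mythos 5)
Your proposal is correct and follows essentially the same route as the paper: the adjointness identity $\langle x^jz,y\rangle=\langle z,x^jy\rangle$ for $x\in\cS$ gives well-definedness and skew-symmetry, and nondegeneracy comes from $\Im(x^j)=\ker(x^j)^\perp$ (Lemma \ref{lem:ker-perp}). The only cosmetic difference is that you invoke Lemma \ref{lem:ker-perp} (plus $(W^\perp)^\perp=W$) for well-definedness where the paper redoes the adjointness computation directly, but since that lemma is proved by the same identity there is no circularity and no substantive divergence.
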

\begin{proof}
To see that this pairing is well defined, let $u,w\in\Im(x^j)$ and suppose $z_1, z_2\in V$ are such that $u=x^jz_1=x^jz_2$. We also let $y\in V$ be such that $x^jy=w$. Then
$$\langle z_1,w\rangle=\langle z_1,x^jy\rangle=\langle x^j z_1,y\rangle= \langle x^j z_2,y\rangle=\langle z_2,x^jy\rangle=\langle z_2,w\rangle.$$
The fact that $\llangle~,~\rrangle_j$ is a skew-symmetric bilinear follows directly from the definition because if $u,w,z,y\in V$, with $x^jz=u$ and $x^jy=w$, then
$$\llangle u,w\rrangle_j = \langle z,w\rangle=-\langle w,z\rangle=-\langle x^jy,z\rangle=-\langle y,x^jz\rangle=-\langle y,u\rangle=-\llangle w,u\rrangle_j;$$
also, for $\alpha,\beta\in\C$ we have $x^j(\alpha z)=\alpha u$ and $x^j(\beta y)=\beta w$, so
$$\llangle \alpha u,\beta w\rrangle_j = \langle\alpha z,\beta w\rangle=\alpha\beta\langle z,w\rangle=\alpha\beta\llangle  u, w\rrangle_j.$$

Finally, suppose that there is $u\in \Im(x^j)$ such that $\llangle u,w\rrangle_j=0$ for all $w\in \Im(x^j)$, and let $z\in V$ with $x^jz=u$. Then $0=\llangle u,w\rrangle_j=\langle z,w\rangle$ for all $w\in\Im(x^j)$
so $z\in \left(\Im(x^j)\right)^\perp=\ker(x^j)$ (by Lemma \ref{lem:ker-perp}), hence $u=x^jz=0$. This proves that the form is indeed nondegenerate.
\end{proof}
In particular, $\llangle~,~\rrangle_0=\langle~,~\rangle$, and when $j=1$ we might omit the index and just denote $\llangle~,~\rrangle_1=\llangle~,~\rrangle$.
We denote the perpendicular with respect to the form $\llangle~,~\rrangle_j$ by $\pperp_j$. So, if $W\subset \Im(x^j)$, then 
$$W^{\pperp_j}=\{u\in\Im(x^j)~|~\llangle u,w\rrangle_j=0,\text{ for all }w\in W \}.$$
In particular $\pperp_0=\perp$ is the perpendicular with respect to $\langle~,~\rangle$, and when $j=1$ we write $\pperp=\pperp_1$.

\begin{lemma}\label{lem:F-rad-j}Let $F\subset V$ be a subspace, then for all $j\geq 0$, $x^j(F^\perp)=(F\cap \Im(x^j))^{\pperp_j}$.
\end{lemma}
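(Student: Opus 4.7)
The plan is to prove the equality by first establishing one inclusion directly from the definitions of $\llangle~,~\rrangle_j$ and $\pperp_j$, and then showing that the two subspaces have the same dimension.

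For the inclusion $x^j(F^\perp)\subseteq (F\cap \Im(x^j))^{\pperp_j}$, I would pick $u\in x^j(F^\perp)$, write $u=x^j z$ with $z\in F^\perp$, and take an arbitrary $w\in F\cap \Im(x^j)$. By definition $\llangle u,w\rrangle_j=\langle z,w\rangle$, and this vanishes because $z\in F^\perp$ and $w\in F$. Hence $u\in(F\cap \Im(x^j))^{\pperp_j}$. This uses only the definition of the new form, together with the fact that the definition of $\llangle~,~\rrangle_j$ does not depend on the choice of preimage (already established).

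For the dimension count, the key input is Lemma \ref{lem:ker-perp}, which gives $\ker(x^j)=\Im(x^j)^\perp$. On one side I would compute
$$\dim x^j(F^\perp)=\dim F^\perp-\dim(F^\perp\cap \ker(x^j))=(2n-\dim F)-\dim\bigl((F+\Im(x^j))^\perp\bigr),$$
using that $F^\perp\cap \Im(x^j)^\perp=(F+\Im(x^j))^\perp$. Simplifying this using $\dim(F+\Im(x^j))=\dim F+\dim \Im(x^j)-\dim(F\cap \Im(x^j))$ yields
$$\dim x^j(F^\perp)=\dim \Im(x^j)-\dim(F\cap \Im(x^j)).$$
On the other side, since $\llangle~,~\rrangle_j$ is a nondegenerate bilinear form on $\Im(x^j)$, for any subspace $W\subseteq \Im(x^j)$ we have $\dim W+\dim W^{\pperp_j}=\dim \Im(x^j)$; applying this to $W=F\cap \Im(x^j)$ gives the same dimension. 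Combined with the inclusion, this forces equality.

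The argument is essentially a dimension count, and there is no real obstacle beyond being careful with the nondegenerate duality on $\Im(x^j)$ under the new pairing (which has already been verified in the preceding lemma). The only step that requires genuine input beyond definitions is the identification $\ker(x^j)=\Im(x^j)^\perp$, which is exactly Lemma \ref{lem:ker-perp}; everything else is linear algebra manipulation.
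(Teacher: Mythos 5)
Your proof is correct. The first inclusion $x^j(F^\perp)\subseteq (F\cap \Im(x^j))^{\pperp_j}$ is handled exactly as in the paper. For the converse, however, you take a genuinely different route: the paper shows the reverse inclusion directly (picking $u\in(F\cap\Im(x^j))^{\pperp_j}$, writing $u=x^jz$, deducing $z\in (F\cap\Im(x^j))^\perp=F^\perp+\ker(x^j)$, hence $u\in x^j(F^\perp+\ker(x^j))=x^j(F^\perp)$), whereas you replace this with a dimension count. Your computation $\dim x^j(F^\perp)=\dim\Im(x^j)-\dim(F\cap\Im(x^j))$ is correct, as is the appeal to nondegeneracy of $\llangle~,~\rrangle_j$ on $\Im(x^j)$ to get $\dim W+\dim W^{\pperp_j}=\dim\Im(x^j)$. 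Both arguments hinge on the same key fact $\ker(x^j)=\Im(x^j)^\perp$ from Lemma \ref{lem:ker-perp}; the paper uses it to unravel $(F\cap\Im(x^j))^\perp$, you use it to identify $F^\perp\cap\ker(x^j)$ with $(F+\Im(x^j))^\perp$. The trade-off is minor: your version leans more explicitly on the nondegeneracy established in the preceding lemma, while the paper's double-inclusion argument is slightly more self-contained and incidentally exhibits the isomorphism between the two sides rather than just their equality of dimension.
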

\begin{proof}
Let $u\in x^j(F^\perp)$, then there is $z\in F^\perp$ such that $x^jz=u$, hence for all $w\in F\cap \Im(x^j)$ we have $\llangle u,w\rrangle_j=\langle z,w\rangle=0$, so $u\in (F\cap \Im(x^j))^{\pperp_j}$. Viceversa, if $u\in(F\cap \Im(x^j))^{\pperp_j}$, then $u\in\Im(x^j)$, so there is $z\in V$ with $u=x^jz$, and we have that for all $w\in F\cap \Im(x^j)$, $0=\llangle u,w\rrangle_j=\langle z,w\rangle$, so $z\in (F\cap \Im(x^j))^\perp=F^\perp +\Im(x^j)^\perp=F^\perp+\ker(x^j)$. This shows that 
$(F\cap \Im(x^j))^{\pperp_j}\subseteq x^j(F^\perp+\ker(x^j))=x^j(F^\perp)$ which concludes the proof.
\end{proof}
The next two lemmas will specifically be relevant in the case where $x^2=0$.
\begin{lemma}\label{lem:vImvnotinFk}
If $v\in\Im(x)$, $v\not\in F$, then $F\cap x(F^\perp)=(F+\C v)\cap x(F^\perp)$
if and only if
$$F\cap x(F^\perp)\not\subset (\C v)^{\pperp}.$$
\end{lemma}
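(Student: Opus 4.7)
The plan is to prove the equivalence by first establishing an intermediate reformulation: the equality $F \cap x(F^\perp) = (F + \C v) \cap x(F^\perp)$ holds if and only if $v \notin F + x(F^\perp)$. Indeed, the inclusion $\subseteq$ is automatic, and any element of $(F+\C v) \cap x(F^\perp)$ has the form $u = f + \alpha v$ with $f \in F$ and $u \in x(F^\perp)$. Since $v \notin F$, such a $u$ lies in $F$ precisely when $\alpha = 0$, so the two sides coincide iff no such $u$ with $\alpha \neq 0$ exists, which after rearranging is exactly the condition $v \notin F + x(F^\perp)$.

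Next, I would apply Lemma \ref{lem:F-rad-j} with $j=1$ to rewrite $x(F^\perp) = (F \cap \Im(x))^{\pperp}$, working inside the nondegenerate symplectic space $(\Im(x), \llangle~,~\rrangle)$. Set $U := F \cap \Im(x)$. Since $v \in \Im(x)$, any decomposition $v = f + u$ with $u \in x(F^\perp) \subseteq \Im(x)$ forces $f = v - u \in F \cap \Im(x) = U$, so the condition $v \in F + x(F^\perp)$ collapses to $v \in U + U^{\pperp}$, with perpendicular now taken inside $\Im(x)$.

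Inside the nondegenerate space $\Im(x)$ one has the standard identity $U + U^{\pperp} = (U \cap U^{\pperp})^{\pperp}$, so $v \in U + U^{\pperp}$ is equivalent to $U \cap U^{\pperp} \subseteq (\C v)^{\pperp}$. The final identification is
\[
U \cap U^{\pperp} = (F \cap \Im(x)) \cap (F \cap \Im(x))^{\pperp} = F \cap x(F^\perp),
\]
where the last step reuses Lemma \ref{lem:F-rad-j} (and the containment $x(F^\perp) \subseteq \Im(x)$). Stringing the three equivalences together gives: the original equality fails iff $F \cap x(F^\perp) \subseteq (\C v)^{\pperp}$, which is the contrapositive of the claim in the lemma.

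The whole argument is essentially bookkeeping in linear algebra once Lemma \ref{lem:F-rad-j} is available; the main obstacle is being careful about which ambient space each perpendicular is taken in (namely switching between $\perp$ on $V$ and $\pperp$ on $\Im(x)$) and confirming that $v \in \Im(x)$ forces the $F$-component of any decomposition to lie in $U$. Note that the proof does not actually require $x^2 = 0$, although the lemma will mainly be used in that case.
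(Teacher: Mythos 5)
Your proof is correct, and it takes a genuinely different (and cleaner) route than the paper's. The paper argues by dimension counting: it rewrites the equality $F\cap x(F^\perp)=(F+\C v)\cap x(F^\perp)$ as an equality of dimensions, takes $\pperp$ of everything inside $\Im(x)$, and after a chain of manipulations reduces to the identity $\dim(x(F^\perp))-\dim(x(F^\perp)\cap(\C v)^{\pperp})=\dim(F\cap x(F^\perp))-\dim(F\cap x(F^\perp)\cap(\C v)^{\pperp})$; since $(\C v)^{\pperp}$ has codimension one in $\Im(x)$, each side is $0$ or $1$, and the hypothesis $v\notin F$ forces the left side to equal $1$, which yields the stated criterion. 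You instead work with elements: since $v\notin F$ the sum $F+\C v$ is direct, which gives the reformulation that equality holds iff $v\notin F+x(F^\perp)$; the hypothesis $v\in\Im(x)$ collapses this to $v\notin U+U^{\pperp}$ with $U=F\cap\Im(x)$ and $U^{\pperp}=x(F^\perp)$ by Lemma \ref{lem:F-rad-j}; and the standard identity $U+U^{\pperp}=(U\cap U^{\pperp})^{\pperp}$ in the nondegenerate space $\left(\Im(x),\llangle~,~\rrangle\right)$ together with $U\cap U^{\pperp}=F\cap x(F^\perp)$ finishes the argument. All steps check out: nondegeneracy of $\llangle~,~\rrangle$ on $\Im(x)$ is supplied by the paper's earlier lemma, both hypotheses $v\notin F$ and $v\in\Im(x)$ are used exactly where you flag them, and (as you note, and as is also true of the paper's proof) nothing requires $x^2=0$. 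Your route avoids the dimension bookkeeping and the codimension-one case analysis entirely and makes the role of each hypothesis more transparent, while the paper's computation is closer in spirit to the dimension estimates that recur throughout Section \ref{sec:proof}.
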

\begin{proof}
Since $F\cap x(F^\perp)\subseteq (F+\C v)\cap x(F^\perp)$, we have equality exactly when the two spaces have the same dimension. Notice that $F\cap x(F^\perp)=F\cap \Im(x)\cap x(F^\perp)$ and, since $v\in\Im(x)$, 
$$(F+\C v)\cap x(F^\perp)=(F+\C v)\cap\Im(x)\cap x(F^\perp)=((F\cap \Im(x))+\C v)\cap x(F^\perp).$$
Hence $F\cap x(F^\perp)=(F+\C v)\cap x(F^\perp)$ if and only if
\begin{align}
&\dim\left(F\cap x(F^\perp)\right)=\dim\left((F+\C v)\cap x(F^\perp)\right)\notag \\
\iff & \dim\left(F\cap \Im(x)\cap x(F^\perp)\right)=\dim\left(((F\cap \Im(x))+\C v)\cap x(F^\perp)\right)\notag \\
\iff & \dim\left(\left(F\cap \Im(x)\cap x(F^\perp)\right)^{\pperp}\right)=\dim\left(\left(((F\cap \Im(x))+\C v)\cap x(F^\perp)\right)^{\pperp}\right) \notag\\
\iff & \dim\left(\left(F\cap \Im(x)\right)^{\pperp}+\left(x(F^\perp)\right)^{\pperp}\right)=\dim\left(\left(((F\cap \Im(x))+\C v)\right)^{\pperp}+ \left(x(F^\perp)\right)^{\pperp}\right)\notag \\
\iff & \dim\left(x(F^\perp)+\left(F\cap\Im(x)\right)\right)=\dim\left(\left((F\cap \Im(x))^{\pperp}\cap (\C v)^{\pperp}\right)+ \left(F\cap\Im(x)\right)\right)\notag \\
\iff & \dim\left(x(F^\perp)\right)+\dim\left(F\cap\Im(x)\right)-\dim\left(F\cap \Im(x)\cap x(F^\perp)\right)\notag \\
& =\dim\left(\left(x(F^\perp)\cap (\C v)^{\pperp}\right)+ \left(F\cap\Im(x)\right)\right)\notag \\
\iff & \dim\left(x(F^\perp)\right)+\dim\left(F\cap\Im(x)\right)-\dim\left(F\cap x(F^\perp)\right)\notag \\
& =\dim\left(x(F^\perp)\cap (\C v)^{\pperp}\right)+ \dim\left(F\cap\Im(x)\right)-\dim\left(x(F^\perp)\cap (\C v)^{\pperp}\cap F\cap\Im(x) \right)\notag \\
\iff & \dim\left(x(F^\perp)\right)+\dim\left(F\cap\Im(x)\right)-\dim\left(F\cap x(F^\perp)\right)\notag \\
& =\dim\left(x(F^\perp)\cap (\C v)^{\pperp}\right)+ \dim\left(F\cap\Im(x)\right)-\dim\left(x(F^\perp)\cap (\C v)^{\pperp}\cap F\cap\Im(x) \right)\notag \\
\iff & \dim\left(x(F^\perp)\right)-\dim\left(x(F^\perp)\cap (\C v)^{\pperp}\right)= \dim\left(F\cap x(F^\perp)\right)-\dim\left(F\cap x(F^\perp)\cap (\C v)^{\pperp}\right)\label{eq:diff-dim}
\end{align}
Now, since $(\C v)^{\pperp}$ is a codimension $1$ subspace of $\Im(x)$ we have that each of the two sides of the inequality are either equal to $0$ or $1$. Since $v\not\in F$, hence $v\not\in F\cap \Im(x)$, we have that
\begin{align*}
(F\cap\Im(x))+\C v & \neq F\cap\Im(x) \\
\left((F\cap\Im(x))+\C v\right)^{\pperp} & \neq \left(F\cap\Im(x)\right)^{\pperp} \\
\left((F\cap\Im(x)\right)^{\pperp}\cap \left(\C v\right)^{\pperp} & \neq \left(F\cap\Im(x)\right)^{\pperp}\\
x(F^\perp)\cap \left(\C v\right)^{\pperp}& \neq x(F^\perp),
\end{align*} hence the LHS of \eqref{eq:diff-dim} is $1$. Therefore, the RHS equals the LHS if and only if
$$F\cap x(F^\perp)\neq F\cap x(F^\perp)\cap (\C v)^{\pperp}$$
which is equivalent to $F\cap x(F^\perp)\not\subset (\C v)^{\pperp}$.
\end{proof}
\begin{lemma}\label{lem:vnotImvnotinFk}
If $v\not\in\Im(x)$, $v\not\in F$, then $F\cap x(F^\perp)=(F+\C v)\cap x(F^\perp)$
if and only if
$$(v+F)\cap x(F^\perp)=\emptyset.$$    
\end{lemma}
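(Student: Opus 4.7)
The key observation is that since $v \notin F$, the set $v+F$ is an affine coset rather than a linear subspace, and the lemma recasts a condition about this coset meeting $x(F^\perp)$ as an equality of genuine linear intersections. The inclusion $F \cap x(F^\perp) \subseteq (F + \C v) \cap x(F^\perp)$ is automatic, so the equality is equivalent to: every element of $(F + \C v) \cap x(F^\perp)$ already lies in $F$. The plan is to verify each direction directly from this reformulation.

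For the forward direction, I would assume $F \cap x(F^\perp) = (F + \C v) \cap x(F^\perp)$ and argue by contradiction. If $(v+F) \cap x(F^\perp) \neq \emptyset$, pick $f \in F$ with $v+f \in x(F^\perp)$. Then $v+f \in (F+\C v) \cap x(F^\perp) = F \cap x(F^\perp) \subseteq F$, which gives $v = (v+f) - f \in F$, contradicting the hypothesis $v \notin F$.

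For the reverse direction, assume $(v+F) \cap x(F^\perp) = \emptyset$ and take an arbitrary $w \in (F + \C v) \cap x(F^\perp)$. Write $w = f + \lambda v$ with $f \in F$ and $\lambda \in \C$. If $\lambda \neq 0$, then $\lambda^{-1} w = v + \lambda^{-1} f$ lies both in $v+F$ and (since $x(F^\perp)$ is a subspace) in $x(F^\perp)$, contradicting the emptiness assumption. Hence $\lambda = 0$, so $w = f \in F \cap x(F^\perp)$, which together with the automatic inclusion yields the required equality.

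I expect no significant obstacle here: the argument is essentially linear-algebraic bookkeeping about when a one-dimensional extension of a subspace contributes nothing new to an intersection, parallel in spirit to Lemma \ref{lem:vImvnotinFk} but simpler because the hypothesis $v \notin \Im(x)$ rules out the subtler dimension-counting phenomena that required the pairing $\llangle~,~\rrangle$ in the previous case. In fact the hypothesis $v \notin \Im(x)$ is not strictly needed in the proof itself; it is present to partition the analysis into two mutually exclusive cases, with Lemma \ref{lem:vImvnotinFk} handling the complementary regime where $v \in \Im(x)$.
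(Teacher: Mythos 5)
Your proof is correct and is essentially the paper's own argument written out in full: the paper's proof is the same one-line observation that, since $v\notin F$, the equality $(F+\C v)\cap x(F^\perp)=F\cap x(F^\perp)$ holds if and only if no element $u+v$ with $u\in F$ lies in $x(F^\perp)$, which your rescaling argument justifies. Your side remark is also accurate — the equivalence only needs $v\notin F$; the paper invokes $v\notin\Im(x)$ merely to note $v\notin x(F^\perp)$, and that hypothesis chiefly serves to delimit this case from the one treated in Lemma \ref{lem:vImvnotinFk}.
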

\begin{proof}
Notice that $v\not\in \Im(x)$, so in particular $v\not\in x(F^\perp)$. Since additionally $v\not\in F$, we have that $(F+\C v)\cap x(F^\perp)= F\cap x(F^\perp)$ if and only if for all $u\in F$, $u+v\not\in x(F^\perp)$.
\end{proof}
\subsection{Computing $\J\left(x|_{F_k^\perp/F_k}\right)$}We consider now any isotropic subspace $F_k\in\Gr^\perp_k(\ker(x)\cap(\C[x]v)^\perp)$, using the subscript to emphasize the dimension $k=\dim(F_k)$. We assume that $x^2=0$ and we separate two cases.
\subsubsection{$x=0$}
In this case, $\J(x)=(1^{2n})$, and $\Im(x^j)=0$ when $j>0$, hence, using Lemma \ref{lem:jtype-quot} we have $a_0=k$, $a_j=0$ for $j>0$ and so $\J(x|_{V/F_k})=\J(x|_{F_k^\perp})=(1^{2n-k})$. Further, using Lemma \ref{lem:FcapFperp} we have $a'_0=k$, $a'_j=0$ for $j>0$, and so $\J\left(x|_{F_k^\perp/F_k}\right)=(1^{2n-2k})$.

\subsubsection{$x\neq 0$, $x^2=0$} \label{section:xnonzero}
In this case, since $x\in\cS$, $\J(x)=\lambda\cup\lambda$, with $\lambda=(2^{n_2}1^{n_1})$, $2n_2+n_1=n$, we then have $\J(x)=(2^{2n_2}1^{2n_1})$. Let $k_2=\dim(F_k\cap \Im(x))$, $k_1=\dim\left(F_k/(F_k\cap \Im(x))\right)$, so $k=k_1+k_2$. Using Lemma \ref{lem:jtype-quot} we have $a_0=k=k_1+k_2$, $a_1=k_2$, $a_j=0$ for $j>1$. We remove $a_0-a_1=k_1$ boxes from the first column and $a_1-a_2=k_2$ boxes from the second column, which gives $\J(x|_{V/F_k})=\J(x|_{F_k^\perp})=(2^{2n_2-k_2}1^{2n_1-k_1+k_2})$. 

Then, to use Lemma \ref{lem:FcapFperp} we need to know $\dim(F_k\cap x(F^\perp_k))$ and, by Lemma \ref{lem:F-rad-j}, $F_k\cap x(F^\perp_k)$ is the radical of the bilinear form $\llangle~,~\rrangle$ restricted to $F_k\cap\Im(x)$. Therefore, $F_k\cap \Im(x)/F_k\cap x(F_k^\perp)$ is a symplectic space with form $\llangle~,~\rrangle$, hence even dimensional, say $\dim\left(F_k\cap \Im(x)/F_k\cap x(F_k^\perp)\right)=2h$. We then have from Lemma \ref{lem:FcapFperp} that $a'_0=k=k_1+k_2$, $a'_1=k_2-2h$, $a'_j=0$ for $j>1$, so we need to remove $k_1+2h$ boxes from the first column and $k_2-2h$ boxes from the second column which results in $\J\left(x|_{F_k^\perp/F_k}\right)=(2^{2n_2-2k_2+2h}1^{2n_1-2k_1+2k_2-4h})$. 

\subsection{Computing $\J\left(x|_{F_k^\perp/F_k+\C[x]v}\right)$} We consider $F_k$ as above, computing the Jordan type will then give us, using Theorem \ref{thm:t-ah}, $\eJ\left(v+F_k,x|_{F_k^\perp/F_k}\right)$. There are six cases to consider.

\subsubsection{$x=0$, $v=0$}\label{subsub:x0-v0}In this case, $\mu=\emptyset$, $\nu=\lambda=1^{n}$, so we have $\J\left(x|_{F_k^\perp/F_k+\C[x]v}\right)=\J\left(x|_{F_k^\perp/F_k}\right)=(1^{2n-2k})$ and hence $\eJ\left(v+F_k,x|_{F_k^\perp/F_k}\right)=(\emptyset,1^{n-k})$ by Theorem \ref{thm:t-ah}.
\subsubsection{$x=0$, $v\neq 0$}\label{subsub:x0-vnot0}In this case, $\mu=\lambda=1^n$, $\nu=\emptyset$.
\begin{enumerate}[(a)]
    \item If $v\in F_k$, then $b'_0=k$, $b'_j=0$ for $j>0$, so, by Lemma \ref{lem:FcapFperpCv}, $\J\left(x|_{F_k^\perp/F_k+\C[x]v}\right)=\J\left(x|_{F_k^\perp/F_k}\right)=(1^{2n-2k}),$ and $\eJ\left(v+F_k,x|_{F_k^\perp/F_k}\right)=(\emptyset,1^{n-k})$.
\item If $v\not\in F_k$, then $b'_0=k+1$, $b'_j=0$, so by Lemma \ref{lem:FcapFperpCv}, $\J\left(x|_{F_k^\perp/F_k+\C[x]v}\right)=(1^{2n-2k-1})$, and $\eJ\left(v+F_k,x|_{F_k^\perp/F_k}\right)=(1^{n-k},\emptyset)$.
\end{enumerate}
\subsubsection{$x\neq 0$, $x^2=0$, $v=0$}\label{subsub:xnot0-v0}In this case, $\mu=\emptyset$, $\nu=\lambda=2^{n_2}1^{n_1}$, so we have $\J\left(x|_{F_k^\perp/F_k+\C[x]v}\right)=\J\left(x|_{F_k^\perp/F_k}\right)=(2^{2n_2-2k_2+2h}1^{2n_1-2k_1+2k_2-4h})$ and hence $\eJ\left(v+F_k,x|_{F_k^\perp/F_k}\right)=(\emptyset, 2^{n_{2}-k_{2}+h}1^{n_{1}-k_{1}+k_{2}-2h})$ by Theorem \ref{thm:t-ah}.

\subsubsection{$x\neq 0$, $x^2=0$, $0\neq v\in \Im (x)$}\label{subsub:v-in-Im}In this case, $\mu=1^{n_2}$, $\nu=1^{n_2+n_1}$, with $n_2>0$.
\begin{enumerate}[(a)]
\item If $v\in F_k$, then $b'_j=a'_j$, for all $j\geq 0$, so, by Lemma \ref{lem:FcapFperpCv}, $\J\left(x|_{F_k^\perp/F_k+\C[x]v}\right)=\J\left(x|_{F_k^\perp/F_k}\right)=(2^{2n_2-2k_2+2h}1^{2n_1-2k_1+2k_2-4h})$, and $\eJ\left(v+F_k,x|_{F_k^\perp/F_k}\right)=(\emptyset,2^{n_{2}-k_{2}+h}1^{n_{1}-k_{1}+k_{2}-2h})$.
\item If $v\not\in F_k$, and $(F_k+\C v)\cap x(F_k^\perp)\supsetneq F_k\cap x(F_k^\perp)$ (by Lemma \ref{lem:vImvnotinFk} this means that $F_k\cap x(F_k^\perp)\subset (\C v)^{\pperp}$), then we have $b'_0=k+1=a'_0+1$, and $b'_1=k_2-2h+1=a'_1+1$. Hence $\J(x|_{F_{k}^{\perp}/F_{k} + \mathbb{C}[x]v})=(2^{2n_2-2k_2+2h-1}1^{2n_1-2k_1+2k_2-4h+1})$ and we now demonstrate how to obtain that $\eJ\left(v+F_k,x|_{F_k^\perp/F_k}\right)=(1^{n_2-k_2+h},1^{n_1+n_2-k_1-h})$, by using Theorem \ref{thm:t-ah}.

We want to find the exotic Jordan type $\eJ\left(v+F,x|_{F^\perp/F}\right)=(\mu',\nu')$. Since $\J\left(x|_{F_k^\perp/F_k}\right)=(2^{2n_2-2k_2+2h}1^{2n_1-2k_1+2k_2-4h})=(\mu'+\nu')\cup(\mu'+\nu')$, we have $\mu'_{n_1+n_2-k_1-h+1}+\nu'_{n_1+n_2-k_1-h+1}=0$, hence $\mu'_{n_1+n_2-k_1-h+1}=0$. Then, from $\J(x|_{F_{k}^{\perp}/F_{k} + \mathbb{C}[x]v})=(2^{2n_2-2k_2+2h-1}1^{2n_1-2k_1+2k_2-4h+1})=(\mu'_1+\nu'_1,\mu'_2+\nu'_1,\ldots)$ we get that $\mu'_{n_1+n_2-k_1-h+1}+\nu'_{n_1+n_2-k_1-h}=1$, hence $\nu'_{n_1+n_2-k_1-h}=1$. Inductively we have $\mu'_j=0$ for all $j>n_2-k_2+h$, and $\nu'_j=1$ for $n_2-k_2+h
\leq j\leq n_1+n_2-k_1-h+1$. Then, since $\mu'_{n_2-k_2+h}+\nu'_{n_2-k_2+h}=2$, we get $\mu'_{n_2-k_2+h}=1$ and then inductively $\mu'_j=\nu'_j=1$ for all $1\leq j\leq n_2-k_2+h$.
\item\label{item:3} If $v\not\in F_k$, and $(F_k+\C v)\cap x(F_k^\perp)= F_k\cap x(F_k^\perp)$ (by Lemma \ref{lem:vImvnotinFk} this means that $F_k\cap x(F_k^\perp)\not\subset (\C v)^{\pperp}$), then we have $b'_0=k+1=a'_0+1$, and $b'_1=k_2-2h=a'_1$. Hence $\J(x|_{F_{k}^{\perp}/F_{k} + \mathbb{C}[x]v})=(2^{2n_2-2k_2+2h}1^{2n_1-2k_1+2k_2-4h-1})$ and with Theorem \ref{thm:t-ah} and a similar inductive argument as above, which we will omit, we get $\eJ\left(v+F,x|_{F^\perp/F}\right)=(1^{n_1+n_2-k_1-h},1^{n_2-k_2+h})$.
\end{enumerate}

\subsubsection{$x\neq 0$, $x^2=0$, $v\in\ker(x)\setminus \Im (x)$} \label{subsub:v-in-ker} In this case, $\mu=1^{n_2+n_1}$, $\nu=1^{n_2}$, with $n_1,n_2>0$.
The various possibilities for $\eJ\left(v+F,x|_{F^\perp/F}\right)$ in this case are the same as in Section \ref{subsub:v-in-Im} and depend in the exact same way on whether $v\in F_k$ and whether $(F_k+\C v)\cap x(F_k^\perp)= F_k\cap x(F_k^\perp)$ (with the difference that we will use Lemma \ref{lem:vnotImvnotinFk} instead of Lemma \ref{lem:vImvnotinFk} to verify the condition).
\subsubsection{$x\neq 0$, $x^2=0$, $v\not\in\ker(x)$}\label{subsub:v-not-ker}In this case, $\mu=2^{n_2}1^{n_1}$, $\nu=\emptyset$, with $n_2>0$. Since $F_k\subset \ker(x)$, we always have that $v\not\in F_k$ and since $v\in F_n\subset F_k^\perp$, $xv\in x(F_k^\perp)$. Also, since $v=v_{1,2}+\cdots+v_{n_2,2}+v_{n_2+1,1}\cdots+ v_{n_1+n_2,1}$, and $xv=v_{1,1}+\cdots+v_{n_2,1}$, then
$$\dim (F_k+\C[x]v)=\dim(F_k+\C v+\C xv)=\begin{cases}k+1 & \text{ if }xv\in F_k \\ k+2 & \text{ if }xv\not\in F_k\end{cases}.$$
\begin{enumerate}[(a)]
\item If $xv\in F_k$, then $\dim((F_k+\C xv+\C v)\cap x(F_k^\perp))=\dim((F_k+\C v)\cap x(F_k^\perp))=\dim(F_k\cap x(F_k^\perp))=k_2-2h$ because for any $c\neq 0$, $(cv+\ker(x))\cap \Im(x)=\emptyset$.
It follows that in this case $b_0'=k+1$, $b_1'=k_2-2h$, so, like in \ref{subsub:v-in-Im}\eqref{item:3}, $\J(x|_{F_{k}^{\perp}/F_{k} + \mathbb{C}[x]v})=(2^{2n_2-2k_2+2h}1^{2n_1-2k_1+2k_2-4h-1})$ and $\eJ\left(v+F,x|_{F_k^\perp/F_k}\right)=(1^{n_1+n_2-k_1-h},1^{n_2-k_2+h})$.
    \item If $xv\not\in F_k$, then by the modular property of lattices of vector spaces we have $(F_k+\C xv+\C v)\cap x(F_k^\perp)=((F_k+\C v)\cap x(F_k^\perp))+\C xv$, hence
    \begin{align*}
        \dim (F_k+\C[x]v)&=\dim((F_k+\C xv+\C v)\cap x(F_k^\perp))\\
        &= \dim(((F_k+\C v)\cap x(F_k^\perp))+\C xv)\\
        &= \dim(F_k\cap x(F_k^\perp)+\C xv)\\
        &= k_2-2h+1.
    \end{align*}
It follows that in this case $b_0'=k+2$, $b_1'=k_2-2h+1$, so, $\J(x|_{F_{k}^{\perp}/F_{k} + \mathbb{C}[x]v})=(2^{2n_2-2k_2+2h-1}1^{2n_1-2k_1+2k_2-4h})$ and by Theorem \ref{thm:t-ah} and a similar inductive argument as above, we get $\eJ\left(v+F_k,x|_{F_k^\perp/F_k}\right)=(2^{n_2-k_2+h}1^{n_1-k_1+k_2-2h},\emptyset)$.

\end{enumerate}

\section{Proof of Conjecture \ref{prop:B-irred} when $x^2=0$}\label{sec:proof}

For notational simplicity, we let $\alpha_1=k$, so to prove the conjecture we want to describe the variety $$\cB_{((\mu',\nu'),\alpha')}^{((\mu,\nu),\alpha)}=\{F_{k}\in\Gr^\perp_k\left(\ker(x) \cap (\mathbb{C}[x]v)^{\perp}\right)\, | \, \eJ\left(v+F_{k},x|_{F_{k}^\perp/F_{k}}\right)=(\mu',\nu')\, \} $$
for all the possible cases of $(\mu,\nu)$ and $(\mu',\nu')$ that we found in Section \ref{sec:jordan-types}. More precisely, we want to show its irreducibility and compute its dimension, and to do that we will need some more fibre bundles. 

\subsection{More Fibre Bundles}\label{sec:iter-bundle}Given any $F_k\in \cB_{((\mu',\nu'),\alpha')}^{((\mu,\nu),\alpha)}$, we have a filtration $F_k\cap x(F_k^\perp)\subseteq F_k\cap\Im(x)\subseteq F_k$. 

\begin{definition}
For $\ell\geq 0$, we define two maps
$$\pi^\ell_1:\Gr^\perp_\ell(\ker(x)\cap(\C[x]v)^\perp)\to\coprod_{j\geq 0} \Gr_{j}(\ker(x)\cap(\C[x]v)^\perp\cap\Im(x))$$
 $$\pi^\ell_1(F)=F\cap\Im(x);$$
$$\pi^\ell_2:\Gr_{\ell}(\ker(x)\cap(\C[x]v)^\perp\cap\Im(x))\to\coprod_{j\geq 0} \Gr^{\pperp}_{j}(\ker(x)\cap(\C[x]v)^\perp\cap\Im(x)) $$
$$\pi^\ell_2(U)=U\cap U^{\pperp}.$$ 
\end{definition}
\begin{remark}
If $F_k\in\cB_{((\mu',\nu'),\alpha')}^{((\mu,\nu),\alpha)}$, with $\dim(F_k\cap\Im(x))=k_2$ as in Section \ref{section:xnonzero}, then by Lemma \ref{lem:F-rad-j} we have
$$\pi^{k_2}_2(\pi^k_1(F_k))=(F_k\cap\Im(x))\cap (F_k\cap \Im(x))^{\pperp}=F_k\cap x(F_k^\perp).$$
\end{remark}
If we have a subspace $U\subset \ker(x)\cap (\C[x]v)^\perp\cap \Im(x)$, $\dim(U)=u$, and $F\cap\Im(x)=U$, we can consider the following subspaces in the quotient space $\ker(x)\cap (\C[x]v)^\perp/U$: $\overline{F}=F/U$, and $\overline{\Im(x)}=((\ker(x)\cap(\C[x]v)^\perp\cap\Im(x))/U$. We then have 
\begin{align}\label{eq:pi1-inv}(\pi_1^\ell)^{-1}(U) &= \{F\in\Gr^\perp_\ell(\ker(x)\cap(\C[x]v)^\perp)~|~F\cap\Im(x)=U\}\\
&\cong\{\overline{F}\in\Gr^\perp_{\ell-u}((\ker(x)\cap(\C[x]v)^\perp)/U)~|~\overline{F}\cap\overline{\Im(x)}=0\} \notag
\end{align}
which is empty when $\ell-u+\dim((\ker(x)\cap(\C[x]v)^\perp\cap\Im(x))-u>\dim(\ker(x)\cap(\C[x]v)^\perp)-u$ and otherwise is an open dense subvariety of the isotropic Grassmannian $\Gr^\perp_{\ell-u}((\ker(x)\cap(\C[x]v)^\perp)/U)$. This shows that the map $\pi_1^\ell$ is a fibre bundle over any given $\Gr_{j}(\ker(x)\cap(\C[x]v)^\perp\cap\Im(x))$.

Similarly, if we have an isotropic subspace $S\subset S^{\pperp}$, with respect to $\llangle~,~\rrangle$ restricted to $\ker(x)\cap (\C[x]v)^\perp\cap \Im(x)$, with $\dim(S)=s$, and if $U\subseteq \ker(x)\cap(\C[x]v)^\perp\cap \Im(x)$, with $U\cap U^{\pperp}=S$, then $U^{\pperp}+U=S^{\pperp}$. We have then 
\begin{align}\label{eq:pi2-inv}(\pi_2^\ell)^{-1}(S) &= \{U\in\Gr_\ell(\ker(x)\cap(\C[x]v)^\perp\cap \Im(x))~|~U\cap U^{\pperp}=S\} \notag \\
&= \{U\in\Gr_\ell(\ker(x)\cap(\C[x]v)^\perp\cap \Im(x))~|~S\subseteq U\subseteq S^{\pperp}\}\\
&\cong\Gr_{\ell-s}(S^{\pperp}/S) \notag
\end{align}
This fibre is empty when $\ell-s>\dim((\ker(x)\cap(\C[x]v)^\perp\cap\Im(x)))-2s$ and otherwise is isomorphic to a Grassmannian. This shows that the map $\pi_2^\ell$ is a fibre bundle over any given $\Gr^{\pperp}_{j}(\ker(x)\cap(\C[x]v)^\perp\cap\Im(x))$.
\begin{remark}
In the computations of $d^{\alpha}_{(\mu,\nu)}$ it will be helpful to use the fact that $N(1^n)=n(n-1)/2$ and $N(2^{n_2}1^{n_1})=(n_2+n_1)(n_2+n_1-1)/2+n_2(n_2-1)/2$, also since $k=\alpha_1$, we have $\frac{1}{2} \sum_{i\geq 1} (\alpha_{i}^{2} - \alpha_{i})=\frac{1}{2}k(k-1)+\frac{1}{2} \sum_{i\geq 2} (\alpha_{i}^{2} - \alpha_{i})$.
\end{remark}

We now examine all the different possible cases for the proof of Conjecture \ref{prop:B-irred} when $x^2=0$.
\subsection{\underline{$(\mu,\nu) = (\emptyset, 1^{n})$}}In this case we have $x=0$ and $v=0$, so $\ker(x)\cap (\C[x]v)^\perp=V$. By Section \ref{subsub:x0-v0}, no matter the choice of $F_k\in\Gr^\perp_k(V)$, we have $(\mu',\nu')=(\emptyset,1^{n-k})$.

Therefore
$$ \cB_{((\emptyset,1^{n-k}),\alpha')}^{((\emptyset,1^n),\alpha)}=\Gr^\perp_k(\ker(x)\cap(\C[x]v)^{\perp})=\Gr^\perp_k(V)\simeq \Gr^\perp_{k,2n} $$
is an irreducible variety of dimension $k(2n-k)-\frac{1}{2}k(k-1)$ by Remark \ref{rem:grass-dim}. We also have
\begin{align*}
   d_{(\mu,\nu)}^{\alpha} - d_{(\mu',\nu')}^{\alpha'} &= 2N(1^{n}) + |1^n| - \frac{1}{2} \sum_{i\geq 1} (\alpha_{i}^{2} - \alpha_{i}) - \left(   2N(1^{n-k}) +|1^{n-k}| - \frac{1}{2} \sum_{i\geq 2}(\alpha_{i}^{2} - \alpha_{i}) \right)\\
   &= n(n-1)+n-(n-k)(n-k-1)-(n-k)-\frac{1}{2}k(k-1)\\
   &= n^2 -(n-k)^2-\frac{1}{2}k(k-1) \\
   &= k(2n - k)-\frac{1}{2}k(k-1)
\end{align*}
and the shape $(\emptyset,1^{n-k})$ is obtained from $(\emptyset,1^n)$ by removing $k$-boxes that are all in different rows.
\begin{example}We show the two diagrams with $n=5$ and $k=3$, as well as the boxes removed marked with an $\xx$.
$$(\emptyset,1^{n-k})=\left(\emptyset,\young(~,~)\right)\subset (\emptyset,1^{n})=\left(\emptyset,\young(~,~,~,~,~)\right),\qquad \left(\emptyset,\young(~,~,\xx,\xx,\xx)\right).$$
\end{example}

\subsection{\underline{$(\mu,\nu) = ( 1^{n},\emptyset)$}}In this case we have $x=0$ and $v\neq 0$, so $\ker(x)=V$ and $(\C[x]v)^\perp=(\C v)^\perp$ is a $2n-1$ dimensional subspace. By Section \ref{subsub:x0-vnot0}, we have $(\mu',\nu')=(\emptyset,1^{n-k})$ when $v\in F_k$, and $(\mu',\nu')=(1^{n-k},\emptyset)$ when $v\not\in F_k$. 

\subsubsection{}Suppose that $(\mu',\nu')=(\emptyset,1^{n-k})$, so that $v\in F_k$ (which implies $k\geq 1$), then $F_k/\C v$ is a $k-1$-dimensional isotropic subspace of $\ker(x)\cap(\C[x]v)^\perp/(\C v)=(\C v)^\perp/(\C v)$, which the restriction of $\langle,\rangle$ makes into a symplectic space of dimension $2n-2$. Hence

\begin{align*}
\cB_{((\emptyset,1^{n-k}),\alpha')}^{((1^n,\emptyset),\alpha)}&=\{F_{k} \subset \ker(x) \cap (\mathbb{C}[x]v)^{\perp}\, |~F_k\subset F_k^\perp,~v\in F_k\} \\
&\simeq \{F_{k}/(\C v) \subset (\C v)^\perp/(\C v)\, |~F_k/(\C v)\subset (F_k/(\C v))^\perp\}\\
&\simeq \Gr^\perp_{k-1}((\C v)^\perp/(\C v))\simeq \Gr^\perp_{k-1,2n-2}
\end{align*}
is an irreducible variety of dimension $(k-1)(2n-2-(k-1))-\frac{1}{2}(k-1)(k-2)=(k-1)(2n-k-1)-\frac{1}{2}(k-1)(k-2)$ by Remark \ref{rem:grass-dim}.

We then have
\begin{align*}
   & d_{(1^n,\emptyset)}^{\alpha} - d_{(\emptyset,1^{n-k})}^{\alpha'}-\dim \cB_{((\emptyset,1^{n-k}),\alpha')}^{((1^n,\emptyset),\alpha)}\\
   &= 2N(1^{n}) +0- \frac{1}{2} \sum_{i\geq 1} (\alpha_{i}^{2} - \alpha_{i}) -\left( 2N(1^{n-k}) +|1^{n-k}| - \frac{1}{2} \sum_{i\geq 2}(\alpha_{i}^{2} - \alpha_{i}) \right)\\
   &~ -\left((k-1)(2n-k-1)-\frac{1}{2}(k-1)(k-2)\right)\\
   &= n(n-1)-(n-k)(n-k-1)-(n-k)-\frac{1}{2}k(k-1)-(k-1)(2n-k-1)+\frac{1}{2}(k-1)(k-2)\\
   &= n-k.
\end{align*}
This shows that if $1\leq k<n$, $\dim \cB_{((\emptyset,1^{n-k}),\alpha')}^{((1^n,\emptyset),\alpha)}<d_{(1^n,\emptyset)}^{\alpha} - d_{(\emptyset,1^{n-k})}^{\alpha'}$ and in fact the shape $(\emptyset,1^{n-k})$ is not contained in $(1^n,\emptyset)$.
In the case when $k=n$ (which means that $m=1$ and there is a single space in the flag) then 
$$\dim\cB_{((\emptyset,1^{n-k}),\alpha')}^{((1^n,\emptyset),\alpha)}=\dim\cB_{((\emptyset,\emptyset),\alpha')}^{((1^n,\emptyset),\alpha)}= d_{(1^n,\emptyset)}^{\alpha} - d_{(\emptyset,\emptyset)}^{\alpha'}$$ 
and $(\emptyset,1^{n-k})=(\emptyset,\emptyset)$ is indeed obtained from $(1^n,\emptyset)$ by removing $k$ boxes in different rows.
\begin{example}We show the two diagrams with $n=5$ and $k=3$, where the diagrams are not nested, and the case with $n=k=5$, with the boxes removed marked with an $\xx$.
$$(\emptyset,1^{n-k})=\left(\emptyset,\young(~,~)\right)\not\subset (1^{n},\emptyset)=\left(\young(~,~,~,~,~),\emptyset\right);\qquad (\emptyset,\emptyset)\subset(1^{n},\emptyset)=\left(\young(~,~,~,~,~),\emptyset\right),\quad\left(\young(\xx,\xx,\xx,\xx,\xx),\emptyset\right).$$
\end{example}
\subsubsection{}Suppose that $(\mu',\nu')=(1^{n-k},\emptyset)$, so that $v\not\in F_k$, which implies $k<n$. Then $F_k$ is a $k$-dimensional isotropic subspace of $\ker(x)\cap (\C[x]v)^\perp=(\C v)^\perp$ which is a $2n-1$-dimensional space with the restriction of the form $\langle,\rangle$ having rank $2n-2$, so $r=n-1$. We then have 
\begin{align*}
\cB_{((1^{n-k},\emptyset),\alpha')}^{((1^n,\emptyset),\alpha)}&=\{F_{k} \subset (\mathbb{C}v)^{\perp}\, |~F_k\subset F_k^\perp,~v\not\in F_k\}    
\end{align*}
which is an open dense subvariety of
$$\Gr_{k}^{\perp}\left((\C v)^\perp\right)\simeq \Gr_{k,2n-1}^{n-1}$$
hence it is irreducible of dimension $k(2n-1-k)-\frac{1}{2}k(k-1)$ by Lemma \ref{lem:dim-deg-grass} (there is no third term because $k\leq n-1$).

In this case, we have:
\begin{align*}
d_{(1^n,\emptyset)}^{\alpha} - d_{(1^{n-k},\emptyset)}^{\alpha'} &= 2N(1^n) - \frac{1}{2} \sum_{i\geq 1} (\alpha_{i}^{2} - \alpha_{i})  - (2N(1^{n-k}) - \frac{1}{2} \sum_{i\geq 2} (\alpha_{i}^{2} - \alpha_{i})) \\
&= n(n-1) - \frac{1}{2}(k^{2} - k)  - (n-k)(n - k-1) \\
&= k(2n - k - 1) -  \frac{1}{2}(k^{2} - k) 
\end{align*}
which agrees with the dimension of the variety and the shape $(1^{n-k},\emptyset)$ is obtained from $(1^n,\emptyset)$ by removing $k$-boxes that are all in different rows.
\begin{example}We show the two diagrams with $n=5$ and $k=3$, as well as the boxes removed marked with an $\xx$.
$$(1^{n-k},\emptyset)=\left(\young(~,~),\emptyset\right)\subset (1^{n},\emptyset)=\left(\young(~,~,~,~,~),\emptyset\right),\qquad \left(\young(~,~,\xx,\xx,\xx),\emptyset\right).$$
\end{example}
\subsection{\underline{$\mu=\emptyset$, $\nu=2^{n_{2}}1^{n_{1}}$}}\label{sec:mu-empty} In this case we have $x\neq 0$, $x^2=0$ and $v= 0$, so $(\C[x]v)^\perp=V$ and $\ker(x)$ is a $2n_1+2n_2$-dimensional space where the restriction of $\langle,\rangle$ has rank $2n_1$ ($r=n_1$). We define $k_1$, $k_2$, and $h$ as in Section \ref{section:xnonzero}, by Section \ref{subsub:xnot0-v0} we have that $(\mu',\nu')=(\emptyset, 2^{n_{2}-k_{2}+h}1^{n_{1}-k_{1}+k_{2}-2h})$. To describe the variety $\cB_{((\mu',\nu'),\alpha')}^{((\mu,\nu),\alpha)}$ we use the ideas from Section \ref{sec:iter-bundle}. We start by describing the variety 
$$ \mathcal{X}(k_2,h):=\{F_k\in\Gr_k^\perp(\ker(x))~|~\dim(F_k\cap \Im(x))=k_2,\dim(F_k\cap x(F_k^\perp))=k_2-2h\}$$
\begin{lemma}\label{lem:ineq}
For $0\leq k\leq n_1+2n_2$, the variety $\mathcal{X}(k_2,h)$ is nonempty if and only if the following inequalities are satisfied:
\begin{equation}\label{eq:ineq}
\max\{0,k-n_1\}\leq k_2\leq\min\{k,2n_2\};\qquad \max\{0,k_2-n_2\}\leq h\leq \frac{k_2}{2}.    
\end{equation}
\end{lemma}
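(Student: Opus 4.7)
The proof splits into necessity and sufficiency of the stated inequalities.

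For necessity, suppose $F_k\in\mathcal{X}(k_2,h)$ and set $U:=F_k\cap\Im(x)$, so $\dim U=k_2$. The bounds $0\leq k_2\leq \min\{k,2n_2\}$ are immediate from $U\subseteq F_k$ and $U\subseteq\Im(x)$. Since $x^2=0$ we have $\Im(x)\subseteq\ker(x)$, and Lemma~\ref{lem:ker-perp} gives $\Im(x)=\ker(x)^\perp$; hence $\Im(x)$ is precisely the radical of $\langle~,~\rangle|_{\ker(x)}$, and $\ker(x)/\Im(x)$ is symplectic of dimension $2n_1$. The image of $F_k$ there is isotropic of dimension $k-k_2$, forcing $k-k_2\leq n_1$. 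For the $h$-bounds, Lemma~\ref{lem:F-rad-j} with $j=1$ identifies $F_k\cap x(F_k^\perp)$ with $U\cap U^{\pperp}$, the radical of $\llangle~,~\rrangle|_U$; thus $2h$ is the rank of a skew form on a $k_2$-dimensional space, yielding $0\leq h\leq k_2/2$. Finally, $U\cap U^{\pperp}\subseteq U^{\pperp}$ and $\dim U^{\pperp}=2n_2-k_2$ in the nondegenerate symplectic space $\Im(x)$, so $k_2-2h\leq 2n_2-k_2$, i.e.\ $h\geq k_2-n_2$.

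For sufficiency I would construct an $F_k\in\mathcal{X}(k_2,h)$ explicitly in a normal basis $\{v_{ij},v_{ij}^*\}$ adapted to the exotic Jordan type $(\emptyset,2^{n_2}1^{n_1})$. A direct calculation shows $\Im(x)$ has basis $\{v_{i,1},v_{i,2}^*\mid 1\leq i\leq n_2\}$ with $\llangle v_{i,1},v_{j,2}^*\rrangle=\delta_{ij}$ and all other pairings between these basis vectors vanishing. The assumed inequalities guarantee $h\leq k_2-h\leq n_2$ and $k-k_2\leq n_1$, so one may define
$$F_k:=\Span\{v_{i,1}\mid 1\leq i\leq k_2-h\}+\Span\{v_{i,2}^*\mid 1\leq i\leq h\}+\Span\{v_{i,1}\mid n_2<i\leq n_2+k-k_2\}.$$
Each generator lies in $\ker(x)$, and every pairing under $\langle~,~\rangle$ between two generators vanishes by inspection (the unstarred $v$'s pair only with starred $v^*$'s of exactly matching indices, none of which occur in $F_k$), so $F_k$ is an isotropic subspace of dimension $k$. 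The third summand is disjoint from $\Im(x)$, so $F_k\cap\Im(x)$ is spanned by the first two summands and has dimension $k_2$.

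The only delicate step is verifying that $\llangle~,~\rrangle|_{F_k\cap\Im(x)}$ has rank exactly $2h$: the matrix of pairings between the $k_2-h$ basis vectors $v_{i,1}$ and the $h$ basis vectors $v_{j,2}^*$ in $F_k\cap\Im(x)$ is the $h\times h$ identity stacked above a $(k_2-2h)\times h$ zero block, so has rank $h$, whence the full skew Gram matrix of the intersection has rank $2h$ as required. Granting this, $\dim(F_k\cap x(F_k^\perp))=k_2-2h$, so $F_k\in\mathcal{X}(k_2,h)$ and the lemma follows. The bookkeeping of which basis vectors contribute to the radical is the only nontrivial check; the remaining verifications are mechanical.
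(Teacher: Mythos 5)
Your proposal is correct and takes essentially the same route as the paper: the necessity direction uses the same dimension and rank counts (isotropy of the image of $F_k$ in the rank-$2n_1$ quotient of $\ker(x)$, and the identification of $F_k\cap x(F_k^\perp)$ with the radical of $\llangle~,~\rrangle$ on $F_k\cap\Im(x)$ via Lemma \ref{lem:F-rad-j}), and sufficiency is proved by exhibiting an explicit isotropic witness. Your normal-basis subspace is simply a concrete coordinate version of the paper's abstract choice of spanning vectors $u_1,\ldots,u_k$ with prescribed pairings, and your rank-$2h$ verification is carried out carefully.
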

\begin{proof}If the variety is nonempty, then for all $F_k\in\mathcal{X}(k_2,h)$ we have
$$0\leq\dim(F_k\cap \Im(x))=k_2\leq \dim(\Im(x))=2n_2,$$ 
$$k_1=\dim(F_k/F_k\cap\Im(x))\geq 0, \text{ so }k_2=k-k_1\leq k,$$ $$F_k/(F_k\cap\Im(x))\in\Gr_{k_1}^{\perp n_1}(\ker(x)/F_k\cap\Im(x)), \text{ so }k_1\leq n_1, \text{ and }k_2=k-k_1\geq k-n_1,$$ 
$$0\leq \dim(F_k\cap\Im(x)/F_k\cap x(F_k^\perp))=2h\leq\dim(F_k\cap\Im(x))=k_2,$$ 
Since $F_k\cap\Im(x)\subset (F_k\cap x(F_k^\perp))^{\pperp}$, we have $k_2\leq 2n_2-(k_2-2h)$, which implies 
$$0\leq n_2-k_2+h,\text{ so } h\geq k_2-n_2.$$ 
Viceversa, if all the inequalities are satisfied, we can define $F_k=\Span\{u_1,\ldots,u_k\}$. Here we choose $u_1,\ldots,u_{k_2-2h}$ to be linearly independent vectors in $\Im(x)$ such that $\llangle u_i,u_j\rrangle=0$ for all $1\leq i,j,\leq k_2-2h$, then we choose $u_{k_2-2h},\ldots,u_{k_2}$ to be linearly independent vectors in $(\Span\{u_1,\ldots,u_{k_2-2h}\}^{\pperp})\setminus\Span\{u_1,\ldots,u_{k_2-2h}\}$, and finally we choose $u_{k_2+1},\ldots,u_k$ to be linearly independent vectors that are not in $\Im(x)$, such that $\langle u_i,u_j\rangle=0$ for all $k_2+1\leq i,j,\leq k$. Then $F_k\in\mathcal{X}(k_2,h)\neq\emptyset.$
\end{proof}
Observe that
\begin{align*}\mathcal{X}(k_2,h)&:=\{F_k\in\Gr_k^\perp(\ker(x))~|~\dim(F_k\cap \Im(x))=k_2,\dim(F_k\cap x(F_k^\perp))=k_2-2h\}\\
&= (\pi_1^k)^{-1}(\pi_2^{k_2})^{-1}\left(\Gr_{k_2-2h}^{\pperp}(\ker(x)\cap(\C[x]v)^\perp\cap\Im(x))\right)\\
&= (\pi_1^k)^{-1}(\pi_2^{k_2})^{-1}\left(\Gr_{k_2-2h}^{\pperp}(\Im(x))\right).
\end{align*}

The base $\Gr_{k_2-2h}^{\pperp}(\Im(x))$ is an isotropic Grassmannian, which (since $\dim(\Im(x))=2n_2$), is irreducible of dimension 
$$(k_{2} - 2h)(2n_{2} - k_{2} +2h) - \tfrac{1}{2} (k_{2}-2h) (k_{2}-2h-1).$$
Over every point $S\in\Gr_{k_2-2h}^{\pperp}(\Im(x))$, we have $(\pi_2^{k_2})^{-1}(S)\simeq \Gr_{2h}(S^{\pperp}/S)$ by \eqref{eq:pi2-inv}, so each fibre is irreducible of dimension
$$ 2h(2n_{2} - 2k_{2}+2h)$$
hence $(\pi_2^{k_2})^{-1}\left(\Gr_{k_2-2h}^{\pperp}(\Im(x))\right)$ is irreducible, being a fibre bundle with irreducible fibres over an irreducible base.
Again, for each $U\in(\pi_2^{k_2})^{-1}\left(\Gr_{k_2-2h}^{\pperp}(\Im(x))\right) $, we consider the fibre $(\pi_1^k)^{-1}(U)$ which by \eqref{eq:pi1-inv} is isomorphic to an open dense subvariety of $\Gr^\perp_{k_1}(\ker(x)/U)$. Notice that $\ker(x)/U$ is a vector space of dimension $2n_1+2n_2-k_2$ and the restriction of $\langle,\rangle$ has rank $2n_1$, hence each fibre $(\pi_1^k)^{-1}(U)$ is an irreducible subvariety of dimension 
$$\dim\Gr_{k_1,2n_1+2n_2-k_2}^{\perp n_1}=k_{1}(2n_{2}+2n_{1} - k_{2} - k_{1}) - \tfrac{1}{2}k_{1}(k_{1}-1).$$
In conclusion $\mathcal{X}(k_2,h)$ is an irreducible variety of dimension
\begin{align}\label{eq:dim-var-xnot0v0}
    &(k_{2} - 2h)(2n_{2} - k_{2} +2h) - \tfrac{1}{2} (k_{2}-2h) (k_{2}-2h-1)+2h(2n_{2} - 2k_{2}+2h)+\\
    &+k_{1}(2n_{2}+2n_{1} - k_{2} - k_{1}) - \tfrac{1}{2}k_{1}(k_{1}-1). \notag
\end{align}

For $(\mu,\nu)=(\emptyset,2^{n_2}1^{n_1})$, $(\mu',\nu')=(\emptyset,2^{n_{2}-k_{2}+h}1^{n_{1}-k_{1}+k_{2}-2h})$ we compute the difference : 

\begin{equation}\label{eq:diff-dd'-dim}
	\begin{split}
    d_{(\mu,\nu)}^{\alpha} - d_{(\mu',\nu')}^{\alpha'} -\dim\mathcal{X}(k_2,h)&=2N(2^{n_2}1^{n_1})+2n_2+n_1- \frac{1}{2} \sum_{i\geq 1} (\alpha_{i}^{2} - \alpha_{i})-2N(2^{n_{2}-k_{2}+h}1^{n_{1}-k_{1}+k_{2}-2h})\\
    &\quad -(2(n_{2}-k_{2}+h)+n_{1}-k_{1}+k_{2}-2h)+ \frac{1}{2} \sum_{i\geq 2} (\alpha_{i}^{2} - \alpha_{i})-\eqref{eq:dim-var-xnot0v0}\\
    & = (n_{1}+ n_{2})(n_{1} + n_{2} - 1) + n_{2}(n_{2}-1) + 2n_{2} + n_{1}- \tfrac{1}{2}k(k-1) \\
    &\quad -(n_{1}+ n_{2}-k_{1}-h) (n_{1}+ n_{2}-k_{1}-h - 1) \\
    &\quad - (n_{2}-k_{2}+h)(n_{2}-k_{2}+h-1) \\
    & \quad - (2(n_{2}-k_{2}+h) + n_{1}-k_{1}+k_{2} - 2h)-\eqref{eq:dim-var-xnot0v0}\\
    &= h(2n_{1} - 2k_{1} + 1).
    \end{split}
\end{equation}
Since $h\geq 0$ and $n_1\geq k_1$, this difference is always $\geq 0$ and is equal to zero if and only if $h=0$.

Notice that for fixed $0\leq n_1'\leq n_1+n_2$, $0\leq n_2'\leq n_2$ such that $n_1'+2n_2'+k=n_1+2n_2$, we have 
$$\cB_{((\emptyset,2^{n_{2}'}1^{n_{1}'}),\alpha')}^{((\emptyset,2^{n_2}1^{n_1}),\alpha)}=\coprod_{\substack{\max\{0,k-n_1\}\leq k_2\leq\min\{k,2n_2\} \\ \max\{0,k_2-n_2\}\leq h\leq \frac{k_2}{2} \\ k_2-h=n_2-n_2'}}\mathcal{X}(k_2,h)=\coprod_{\substack{\max\{0,n_2-n_1'-n_2'\}\leq h \\
h\leq\min\{k-(n_2-n_2'), n_2-n_2'\}}}\mathcal{X}(h+n_2-n_2',h).
$$
Where the inequalities follow from \eqref{eq:ineq}, together with the fact that $n'_2=n_2-k_2+h$ and $n'_1=n_1-k_1+k_2-2h$ by Section \ref{subsub:xnot0-v0}. 
\begin{lemma}\label{lem:Xinclosure}Suppose that $\cX(k_2,h)\neq\emptyset\neq\cX(k_2+1,h+1)$, then $\cX(k_2+1,h+1)\subset\overline{\cX(k_2,h)}$.
\end{lemma}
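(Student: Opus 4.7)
The plan is to establish the inclusion by exhibiting, for every $F_k \in \cX(k_2+1, h+1)$, an explicit one-parameter family $F_k(t)$ of isotropic subspaces in $\ker(x) \cap (\C[x]v)^{\perp}$ with $F_k(0) = F_k$ and $F_k(t) \in \cX(k_2, h)$ for all $t \ne 0$. The deformation will change the filtration $F_k \cap x(F_k^\perp) \subset F_k \cap \Im(x) \subset F_k$ by pushing one symplectic-pair direction out of $\Im(x)$ into $\ker(x) \setminus \Im(x)$, which simultaneously drops $\dim(F_k \cap \Im(x))$ by $1$ and collapses two dimensions out of the symplectic quotient, leaving a new $1$-dimensional contribution to the radical.

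First, I would choose a basis of $F_k$ adapted to the filtration: vectors $e_1,\dots,e_{k_2-2h-1}$ spanning $F_k \cap x(F_k^\perp)$; vectors $f_1,\dots,f_{2h+2}$ completing to a basis of $F_k \cap \Im(x)$ and chosen so that their classes in $F_k\cap\Im(x)/F_k\cap x(F_k^\perp)$ form a symplectic basis for $\llangle\,,\,\rrangle$ with $\llangle f_1, f_2\rrangle = 1$ and all other pairings among the $f_j$ zero; and vectors $g_1,\dots,g_{k_1'}$ (with $k_1'=k-k_2-1$) completing to a basis of $F_k$. Next I would select $w \in \ker(x) \cap \Span\{g_1,\dots,g_{k_1'}\}^{\perp}$ whose image $\bar w$ in the symplectic quotient $\ker(x)/\Im(x)$ lies in $\Span\{\bar g_1,\dots,\bar g_{k_1'}\}^{\perp} \setminus \Span\{\bar g_1,\dots,\bar g_{k_1'}\}$. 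The existence of such a $w$ requires $k_1' < n_1$, which follows from $\cX(k_2,h) \neq \emptyset$: by Lemma \ref{lem:ineq}, nonemptiness forces $k_1 = k-k_2 \leq n_1$, hence $k_1' = k_1 - 1 < n_1$.

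I would then set
\[
F_k(t) := \Span\{e_1,\dots,e_{k_2-2h-1},\, f_1 + tw,\, f_2,\dots,f_{2h+2},\, g_1,\dots,g_{k_1'}\},
\]
and verify: (i) $F_k(t)$ is $k$-dimensional and isotropic for every $t$ — the isotropy reduces to checking $\langle f_1+tw, \cdot\rangle$ against each basis vector, using $\Im(x) \subset \ker(x) = \Im(x)^{\perp}$ to kill the pairings with the $e_i$'s and $f_j$'s, and using $w \in \Span\{g_l\}^{\perp}$ for the $g_l$'s; (ii) $F_k(t)$ still lies in $\ker(x) \cap (\C[x]v)^{\perp}$, which is automatic since $v = 0$ in this case and all new vectors are in $\ker(x)$; (iii) for $t \ne 0$, $F_k(t) \cap \Im(x) = \Span\{e_i,\, f_2,\dots,f_{2h+2}\}$, of dimension $k_2$, which uses that $\bar w$ and $\bar g_l$ are linearly independent in $\ker(x)/\Im(x)$; and (iv) the radical of $\llangle\,,\,\rrangle$ on $F_k(t) \cap \Im(x)$ equals $\Span\{e_i, f_2\}$, of dimension $k_2 - 2h$, because removing $f_1$ from a symplectic basis $\{f_1,\dots,f_{2h+2}\}$ makes $f_2$ become a new radical direction.

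The main obstacle is the careful bookkeeping in step (iv): one must confirm that the restriction of the symplectic form $\llangle\,,\,\rrangle$ from the $(2h+2)$-dimensional space $\Span\{f_1,\dots,f_{2h+2}\}$ to the hyperplane $\Span\{f_2,\dots,f_{2h+2}\}$ has exactly a one-dimensional radical (which is the case precisely because the original symplectic pairing couples $f_1$ with a single complementary direction $f_2$); and that this fact combines correctly with the original $(k_2-2h-1)$-dimensional radical $\Span\{e_i\}$ to yield the radical of dimension $k_2-2h$ required by $\cX(k_2,h)$. Once these verifications are in place, the family $F_k(t)$ witnesses $F_k \in \overline{\cX(k_2,h)}$, and the inclusion $\cX(k_2+1,h+1) \subset \overline{\cX(k_2,h)}$ follows.
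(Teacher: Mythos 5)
Your proposal is correct and is essentially the paper's own argument: both deform a single vector of $F_k\cap\Im(x)$ lying outside the radical $F_k\cap x(F_k^\perp)$ by adding $t\,w$ for a vector $w\in\ker(x)\cap F_k^\perp$ outside $\Im(x)$ (whose existence follows from $k-(k_2+1)<n_1$, via Lemma \ref{lem:ineq}), and observe that the generic member of the resulting one-parameter family lies in $\cX(k_2,h)$, so the special fibre lies in $\overline{\cX(k_2,h)}$. You merely make the bookkeeping more explicit (symplectic basis of the quotient, $f_2$ becoming the new radical direction); just note that your phrase ``all other pairings among the $f_j$ zero'' should read ``a standard symplectic basis in which $f_1$ pairs only with $f_2$,'' which is all your step (iv) actually uses.
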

\begin{proof}
Let $F_k\in\cX(k_2+1,h+1)\neq\emptyset$, such that $F_k=\Span\{u_1,\ldots,u_k\}$, with $F_k\cap x(F_k^\perp)=\Span\{u_1,\ldots,u_{k_2-2h-1}\}$, $F_k\cap\Im(x)=\Span\{u_1,\ldots,u_{k_2+1}\}$. Since $\cX(k_2,h)\neq\emptyset$, we have $k-n_1\leq k_2$, so $k-(k_2+1)<n_1$, it follows that there exists a vector $w\not\in\Im(x)$, $w\in F_k^\perp\not\in F_k$. Also, $\cX(k_2,h)\neq\emptyset$ implies that $h\geq 0$, hence $2h+2>0$ and $u_{k_2+1}\in F_k\cap\Im(x)\setminus F_k\cap x(F_k^\perp)$. For all $\alpha\in\C$, define a new space $F^\alpha_k$ to be spanned by the same vectors as $F_k$ except that $u_{k_2+1}$ is replaced by $u_{k_2+1}+\alpha w$. Then for all $\alpha\neq 0$ we have $F^\alpha_k\in\cX(k_2,h)$, which shows that $F_k=F^0_k\in\overline{\cX(k_2,h)}$.
\end{proof}
By \eqref{eq:diff-dd'-dim}, since $h(2n_{1} - 2k_{1} + 1)=h(2n_1+2n_2-2n_2'-2k+2h+1)$, and by Lemma \ref{lem:Xinclosure}, we have that for $h'> h$, $\mathcal{X}(h'+n_2-n_2',h')$ is contained in the closure of $\mathcal{X}(h+n_2-n_2',h)$ as a positive codimension subvariety. Therefore $\cB_{((\mu',\nu'),\alpha')}^{((\mu,\nu),\alpha)}=\overline{\mathcal{X}(h+n_2-n_2',h)}$, for the minimal possible value of $h$, i.e. $h=\max\{0,n_2-n_1'-n_2'\}$. This proves that $\cB_{((\mu',\nu'),\alpha')}^{((\mu,\nu),\alpha)}$ is an irreducible variety of dimension $\dim\mathcal{X}(h+n_2-n_2',h)$ with $h=\max\{0,n_2-n_1'-n_2'\}$.

Now, the minimal $h$ is $h=0$ if and only if $n_2\leq n_1'+n_2'$ if and only if the shape $(\emptyset,2^{n_2'}1^{n_1'})$ is obtained from $(\emptyset,2^{n_2}1^{n_1})$ by removing boxes in a vertical strip. In conclusion, 
$\dim\cB_{((\mu',\nu'),\alpha')}^{((\mu,\nu),\alpha)}=d_{(\mu,\nu)}^{\alpha} - d_{(\mu',\nu')}^{\alpha'}$ exactly when the shape is obtained by removing boxes from different rows and is strictly lower otherwise.
\begin{example}Here $n_1=2$, $n_2=3$, $k=4$. In the first example $n'_1+n_2'=0+2<3=n_2$ so some boxes are removed from the same row, while in the second example $n'_1+n'_2=2+1=3\geq n_2$ so the boxes are removed from different rows. 
$$(\emptyset,2^{n'_2}1^{n'_1})=\left(\emptyset,\young(~~,~~)\right)\subset (\emptyset,2^{n_2}1^{n_1})=\left(\emptyset,\young(~~,~~,~~,~,~)\right),\quad \left(\emptyset,\young(~~,~~,\xx\xx,\xx,\xx)\right);$$
$$(\emptyset,2^{n'_2}1^{n'_1})=\left(\emptyset,\young(~~,~,~)\right)\subset (\emptyset,2^{n_2}1^{n_1})=\left(\emptyset,\young(~~,~~,~~,~,~)\right),\quad \left(\emptyset,\young(~~,~\xx,~\xx,\xx,\xx)\right).$$
\end{example}

\subsection{\underline{$\mu=1^{n_{2}}$, $\nu=1^{n_{1} + n_{2}}$}} 
In this case we have $x\neq 0$, $x^2=0$ and $0\neq v\in\Im(x)$, so $\ker(x)\subset (\C[x]v)^\perp$ and $\ker(x)\cap (\C[x]v)^\perp=\ker(x)$ is a $2n_1+2n_2$-dimensional space where the restriction of $\langle,\rangle$ has rank $2n_1$ ($r=n_1$). We define $k_1$, $k_2$ and $h$ as in Section \ref{section:xnonzero}. According to Section \ref{subsub:v-in-Im}, there are three possibilities for $(\mu',\nu')$, which we now examine separately.

\subsubsection{}Suppose that $(\mu',\nu')=(\emptyset,2^{n_2'}1^{n_1'})=(\emptyset,2^{n_2-k_2+h}1^{n_1-k_1+k_2-2h})$. Then by Section \ref{subsub:v-in-Im}, we have $v\in F_k\cap\Im(x)$, which implies that $k_2\geq 1$. Notice that there are two possibilities, either $v\in F_k\cap x(F_k^\perp)$, or $v\in (F_k\cap\Im(x))\setminus (F_k\cap x(F_k^\perp))$. We define the following varieties
$$\mathcal{X}^1(k_2,h)=\{F_k\in\Gr_k^\perp(\ker(x))~|~\dim(F_k\cap \Im(x))=k_2,\dim(F_k\cap x(F_k^\perp))=k_2-2h,v\in F_k\cap x(F_k^\perp) \} $$
\begin{multline*}\mathcal{Y}(k_2,h)=\{F_k\in\Gr_k^\perp(\ker(x))~|~\dim(F_k\cap \Im(x))=k_2, \\ \dim(F_k\cap x(F_k^\perp))=k_2-2h,v\in (F_k\cap\Im(x))\setminus (F_k\cap x(F_k^\perp)) \}.\end{multline*}
If $v\in F_k\cap x(F_k^\perp)$, then $k_2-2h\geq 1$, and $(F_k\cap x(F_k^\perp))/\C v\in \Gr_{k_{2} - 2h -1}^{\pperp}(\mathbb{C}(v)^{\pperp}/\mathbb{C}v).$
It follows that we have a description as an iterated vector bundle
$$\mathcal{X}^1(k_2,h)=(\pi_1^k)^{-1}(\pi_2^{k_2})^{-1}\left(\Gr_{k_2-2h-1}^{\pperp}(\mathbb{C}(v)^{\pperp}/\mathbb{C}v)\right).$$
The base is an isotropic Grassmannian, hence an irreducible variety of dimension
\begin{equation} \label{eq:nulonger1}
(k_{2} - 2h-1)(2n_{2} - k_{2}  + 2h -1) - \tfrac12(k_{2}-2h-1)(k_{2}-2h-2).
\end{equation}
Over every point $S\in\Gr_{k_2-2h-1}^{\pperp}(\mathbb{C}(v)^{\pperp}/\mathbb{C}v)$, we have $(\pi_2^{k_2})^{-1}(S)\simeq \Gr_{2h}(S^{\pperp}/S)$ by \eqref{eq:pi2-inv}, so each fibre is irreducible of dimension
$$ 2h(2n_{2} - 2k_{2}+2h).$$
Finally over a point $U\in(\pi_2^{k_2})^{-1}\left(\Gr_{k_2-2h-1}^{\pperp}(\mathbb{C}(v)^{\pperp}/\mathbb{C}v)\right) $, we have that
$(\pi_1^k)^{-1}(U)$ is isomorphic to an open subvariety of $\Gr^\perp_{k_1}(\ker(x)/U)$. Exactly as in the previous section, then, each fibre $(\pi_1^k)^{-1}(U)$ is an irreducible subvariety of dimension 
$$\dim\Gr_{k_1,2n_1+2n_2-k_2}^{\perp n_1}=k_{1}(2n_{2}+2n_{1} - k_{2} - k_{1}) - \tfrac{1}{2}k_{1}(k_{1}-1).$$
In conclusion $\mathcal{X}^1(k_2,h)$ is an irreducible variety of dimension
\begin{align}\label{eq:dim-var-xnot0-vRad}
    &(k_{2} - 2h-1)(2n_{2} - k_{2}  + 2h -1) - \tfrac12(k_{2}-2h-1)(k_{2}-2h-2)+2h(2n_{2} - 2k_{2}+2h)+\\
    &+k_{1}(2n_{2}+2n_{1} - k_{2} - k_{1}) - \tfrac{1}{2}k_{1}(k_{1}-1). \notag
\end{align}
For $(\mu,\nu)=(1^{n_2},1^{n_1+n_2})$, $(\mu',\nu')=(\emptyset,2^{n_{2}-k_{2}+h}1^{n_{1}-k_{1}+k_{2}-2h})$ we compute the difference : 

\begin{equation}\label{eq:diff-dd'-dimX}
	\begin{split}
    d_{(\mu,\nu)}^{\alpha} - d_{(\mu',\nu')}^{\alpha'} -\dim\mathcal{X}^1(k_2,h)&=2N(2^{n_2}1^{n_1})+n_2+n_1- \frac{1}{2} \sum_{i\geq 1} (\alpha_{i}^{2} - \alpha_{i})-2N(2^{n_{2}-k_{2}+h}1^{n_{1}-k_{1}+k_{2}-2h})\\
    &\quad -(2(n_{2}-k_{2}+h)+n_{1}-k_{1}+k_{2}-2h)+ \frac{1}{2} \sum_{i\geq 2} (\alpha_{i}^{2} - \alpha_{i})-\eqref{eq:dim-var-xnot0-vRad}\\
    & = (n_{1}+ n_{2})(n_{1} + n_{2} - 1) + n_{2}(n_{2}-1) + n_{2} + n_{1}- \tfrac{1}{2}k(k-1) \\
    &\quad -(n_{1}+ n_{2}-k_{1}-h) (n_{1}+ n_{2}-k_{1}-h - 1) \\
    &\quad - (n_{2}-k_{2}+h)(n_{2}-k_{2}+h-1) \\
    & \quad - (2(n_{2}-k_{2}+h) + n_{1}-k_{1}+k_{2} - 2h)-\eqref{eq:dim-var-xnot0-vRad}\\
    &= 2h(n_{1} - k_{1} +1 )+n_2-k_2+h.
    \end{split}
\end{equation}

Since $h\geq 0$, $n_1\geq k_1$, and $n_2-k_2+h\geq 0$, this difference is always $\geq 0$ and is equal to zero if and only if $h=0$ and $n_2-k_2+h=0$ (which implies that $n_2=k_2$).

Now we want to describe $\mathcal{Y}(k_2,h)$, notice that if $v\in (F_k\cap\Im(x))\setminus (F_k\cap x(F_k^\perp))$, that means $h\geq 1$. Since $F_k\cap x(F_k^{\perp})=(F_k\cap \Im(x))^{\pperp}$, if $v\in (F_{k}\cap \Im(x))\setminus (F_k\cap x(F_k^\perp))$, then $F_k\cap x(F_k^{\perp})\subseteq (\C v)^{\pperp}$, with $\C v\not\subset F_k\cap x(F_k^\perp) $. Also the bilinear form $\llangle~,~\rrangle$ restricted to the $(2n_2 - 1)$-dimensional space $(\C v)^{\pperp}\subset \Im(x)$ has rank $2n_2-2$ ($r=n_2-1$). It follows that $F_k\cap x(F_k^{\perp})\in Z:= \{S\in\Gr^{\pperp}_{k_2-2h}((\C v)^{\pperp})~|~S\cap \C v=0\}$ which is an open dense subvariety of  $\Gr_{k_2-2h}^{\pperp}((\C v)^{\pperp})$, hence irreducible of dimension
\begin{equation*} 
\dim\Gr_{k_2-2h,2n_2-1}^{\perp n_2-1}=(k_{2}-2h)(2n_{2}-1-k_2+2h)-\tfrac{1}{2}(k_{2}-2h)(k_{2}-2h-1).
\end{equation*}
(There is no third term because $k-r=k_2-2h-(n_2-1)=-(n_2-k_2+h)-h+1\leq 0$).
  It follows that
$$\mathcal{Y}(k_2,h)=\{F_k\in (\pi_1^k)^{-1}(\pi_2^{k_2})^{-1}(Z)~|~v\in \pi_1^k(F_k)\}.$$
Over every point $S\in Z$, we consider only the points $U\in(\pi_2^{k_2})^{-1}(S)$ such that $v\in U$, i.e. we are choosing $U/(S+\C v)\in \Gr_{2h-1}(S^{\pperp}/(S+\C v))$, so each fibre is irreducible of dimension
$$\dim\Gr_{2h-1,2n_2-2(k_2-2h)-1}= (2h-1)(2n_{2}-2(k_{2}-2h)-1-(2h-1))=(2h-1)(2n_2-2k_2+2h).$$
Finally, over a point $U\in(\pi_2^{k_2})^{-1}\left(Z\right)$, (with $v\in U$) we have that
$(\pi_1^k)^{-1}(U)$ is isomorphic to an open dense subvariety of $\Gr^\perp_{k_1}(\ker(x)/U)$ so, as in previous sections, it is irreducible of dimension 
$$\dim\Gr_{k_1,2n_1+2n_2-k_2}^{\perp n_1}=k_{1}(2n_{2}+2n_{1} - k_{2} - k_{1}) - \tfrac{1}{2}k_{1}(k_{1}-1).$$
In conclusion $\mathcal{Y}(k_2,h)$ is an irreducible variety of dimension
\begin{align}\label{eq:dim-var-xnot0-vnotRad}
    &(k_{2}-2h)(2n_{2}-1-k_2+2h)-\tfrac{1}{2}(k_{2}-2h)(k_{2}-2h-1)+(2h-1)(2n_{2} - 2k_{2}+2h)+\\
    &+k_{1}(2n_{2}+2n_{1} - k_{2} - k_{1}) - \tfrac{1}{2}k_{1}(k_{1}-1). \notag
\end{align}
For $(\mu,\nu)=(1^{n_2},1^{n_1+n_2})$, $(\mu',\nu')=(\emptyset,2^{n_{2}-k_{2}+h}1^{n_{1}-k_{1}+k_{2}-2h})$ we compute the difference : 

\begin{equation}\label{eq:diff-dd'-dimY}
	\begin{split}
    d_{(\mu,\nu)}^{\alpha} - d_{(\mu',\nu')}^{\alpha'} -\dim\mathcal{Y}(k_2,h)&=2N(2^{n_2}1^{n_1})+n_2+n_1- \frac{1}{2} \sum_{i\geq 1} (\alpha_{i}^{2} - \alpha_{i})-2N(2^{n_{2}-k_{2}+h}1^{n_{1}-k_{1}+k_{2}-2h})\\
    &\quad -(2(n_{2}-k_{2}+h)+n_{1}-k_{1}+k_{2}-2h)+ \frac{1}{2} \sum_{i\geq 2} (\alpha_{i}^{2} - \alpha_{i})-\eqref{eq:dim-var-xnot0-vnotRad}\\
    & = (n_{1}+ n_{2})(n_{1} + n_{2} - 1) + n_{2}(n_{2}-1) + n_{2} + n_{1}- \tfrac{1}{2}k(k-1) \\
    &\quad -(n_{1}+ n_{2}-k_{1}-h) (n_{1}+ n_{2}-k_{1}-h - 1) \\
    &\quad - (n_{2}-k_{2}+h)(n_{2}-k_{2}+h-1) \\
    & \quad - (2(n_{2}-k_{2}+h) + n_{1}-k_{1}+k_{2} - 2h)-\eqref{eq:dim-var-xnot0-vnotRad}\\
    &= 2h(n_{1} - k_{1})+n_2-k_2+h.
    \end{split}
\end{equation}
Since $h\geq 1$, $n_1\geq k_1$, and $n_2-k_2+h\geq 0$, this difference is always $\geq 0$ and is equal to zero if and only if $n_1=k_1$ and $n_2-k_2+h=0$.
For fixed $0\leq n_1'\leq n_1+n_2$, $0\leq n_2'\leq n_2$ such that $n_1'+2n_2'+k=n_1+2n_2$, we have
\begin{align*}&\cB_{((\emptyset,2^{n_{2}'}1^{n_{1}'}),\alpha')}^{((1^{n_2},1^{n_1+n_2}),\alpha)}= \\ 
&=\left(\coprod_{\substack{\max\{1,k-n_1\}\leq k_2\leq\min\{k,2n_2\} \\ \max\{0,k_2-n_2\}\leq h< \frac{k_2}{2} \\ k_2-h=n_2-n_2'}}\mathcal{X}^1(k_2,h)\right)\coprod \left( \coprod_{\substack{\max\{1,k-n_1\}\leq k_2\leq\min\{k,2n_2\} \\ \max\{1,k_2-n_2\}\leq h\leq \frac{k_2}{2} \\ k_2-h=n_2-n_2'}}\mathcal{Y}(k_2,h)\right); \\
&=\left(\coprod_{\substack{ \max\{0,n_2-n_1'-n_2'\}\leq h \\ 1-n_2+n_2'\leq h \\ h\leq\min\{k-(n_2-n_2'),n_2-n_2'-1\}}}\mathcal{X}^1(h+n_2-n_2',h)\right)\coprod \left( \coprod_{\substack{\max\{1,n_2-n_1'-n_2'\}\leq h \\ h\leq\min\{k-(n_2-n_2'),n_2-n_2'\}}}\mathcal{Y}(h+n_2-n_2',h)\right);
\end{align*}
where the inequalities follow from the same reasoning as Lemma \ref{lem:ineq}, plus the observations that $k_2>0$, in $\mathcal{X}^1(k_2,h)$ we have $k_2-2h>0$, and in $\mathcal{Y}(k_2,h)$ we have $h>0$.
\begin{lemma}\label{lem:XYclosure}
If $\cX^1(k_2,h)\neq\emptyset\neq\cX^1(k_2+1,h+1)$, then $\cX^1(k_2+1,h+1)\subset\overline{\cX^1(k_2,h)}$; if $\cY(k_2,h)\neq\emptyset\neq\cY(k_2+1,h+1)$, then $\cY(k_2+1,h+1)\subset\overline{\cY(k_2,h)}$; if $\cX^1(k_2,h)\neq\emptyset\neq\cY(k_2+1,h+1)$, then $\cY(k_2+1,h+1)\subset\overline{\cX^1(k_2,h)}$; if $\cX^1(k_2,h)\neq\emptyset\neq\cY(k_2,h)$, then $\cX^1(k_2,h)\subset\overline{\cY(k_2,h)}$.
\end{lemma}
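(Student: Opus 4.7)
The strategy is to prove all four containments by constructing explicit one-parameter deformations, in direct analogy with Lemma \ref{lem:Xinclosure}. For any $F_k$ in the smaller locus I will produce a family $F_k^\alpha$ inside the larger locus for $\alpha\neq 0$, with $F_k^\alpha\to F_k$ as $\alpha\to 0$. The common setup is to pick a basis $u_1,\ldots,u_k$ of $F_k$ adapted to the filtration $F_k\cap x(F_k^\perp)\subseteq F_k\cap\Im(x)\subseteq F_k$: the first vectors span the radical $F_k\cap x(F_k^\perp)$, which by Lemma \ref{lem:F-rad-j} is exactly the radical of $\llangle\,,\,\rrangle$ on $F_k\cap\Im(x)$; the middle vectors form symplectic pairs in the quotient $(F_k\cap\Im(x))/(F_k\cap x(F_k^\perp))$; and the remaining vectors lie outside $\Im(x)$. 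Whether $v$ lies in the radical distinguishes $\cX^1$ from $\cY$, so the key choice is how the deformation interacts with the symplectic pair containing $v$.

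For the first three containments the deformation has the same form as in Lemma \ref{lem:Xinclosure}, namely $u_i\mapsto u_i+\alpha w$ with $w\in F_k^\perp\cap\ker(x)\setminus\Im(x)$; this shrinks $\dim(F_k^\alpha\cap\Im(x))$ by one and enlarges the radical by one. For Case 1 (where $v$ already lies in the radical) perturbing any symplectic-pair vector leaves $v$ in the new radical. For Case 2 the hypothesis $\cY(k_2,h)\neq\emptyset$ forces $h\geq 1$, so there exists a symplectic pair not containing $v$; perturbing a vector in that \emph{other} pair keeps $v$'s own partner intact and therefore keeps $v$ outside the new radical. For Case 3 I instead perturb $v$'s own partner, which strips $v$ of its partner and forces $v$ into the new radical, landing us in $\cX^1(k_2,h)$. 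In all three cases $F_k^\alpha$ is isotropic because $w\in F_k^\perp$, lies in $\ker(x)$ because $w\in\ker(x)$, and lies in $(\C[x]v)^\perp=(\C v)^\perp$ because $v\in F_k\subseteq F_k^\perp$, so $w\in F_k^\perp\subseteq(\C v)^\perp$; the required intersection dimensions and the radical position of $v$ are then read off from the pairing matrix in the chosen basis.

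Case 4 needs a different kind of deformation because the target parameters $(k_2,h)$ coincide with the source, so the perturbation must preserve both $\dim(F_k^\alpha\cap\Im(x))$ and the radical dimension while pushing $v$ out of the radical. The plan is to replace one symplectic-pair vector $u_{k_2-2h+1}$ by $u_{k_2-2h+1}+\alpha u^*$, where $u^*\in\Im(x)$ is chosen so that $\llangle u^*,v\rrangle\neq 0$ but $\llangle u^*,u_i\rrangle=0$ for every \emph{other} radical vector $u_i$, and $u^*\notin F_k$. Existence of such a $u^*$ is a dimension count: if $R'$ denotes the span of the radical vectors other than $v$ and $R$ denotes the full radical, then $(R')^{\pperp}$ inside $\Im(x)$ has dimension one more than $R^{\pperp}$, so $(R')^{\pperp}\setminus(\C v)^{\pperp}$ is open nonempty; moreover $F_k\cap\Im(x)\subseteq (R')^{\pperp}$ is a proper subspace precisely when $n_2\geq k_2-h$, which is guaranteed by Lemma \ref{lem:ineq} since $\cX^1(k_2,h)\neq\emptyset$. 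A direct computation of the resulting $(2h+2)\times(2h+2)$ skew block of $\llangle\,,\,\rrangle$ on $F_k^\alpha\cap\Im(x)$ shows that the new radical is spanned by $u_1,\ldots,u_{k_2-2h-1}$ together with a vector of the form $v+O(\alpha)\,u_{k_2-2h+2}$; the dimension is still $k_2-2h$, but this new radical vector no longer equals $v$ for $\alpha\neq 0$, so $v$ itself is ejected from the radical and $F_k^\alpha\in\cY(k_2,h)$.

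The main obstacle will be Case 4: the perturbation lives entirely inside $\Im(x)$ and one must track the explicit change of the radical through a skew block computation, rather than simply observing that a removed vector forces a partner into the radical as in the first three cases. The existence of the vector $u^*$ with all the required properties rests on the nonemptiness inequality $n_2\geq k_2-h$ from Lemma \ref{lem:ineq}, so this must be invoked carefully, and one uses that $\C$ is infinite to guarantee that $(R')^{\pperp}$ is not the union of the two proper subspaces $F_k\cap\Im(x)$ and $(\C v)^{\pperp}\cap(R')^{\pperp}$.
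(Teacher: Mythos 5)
Your proposal is correct and is exactly the paper's intended argument: the paper's proof just says the four containments follow by deformations ``entirely analogous'' to Lemma \ref{lem:Xinclosure} with an appropriate basis and deformation vector in each case, which is what you carry out, and your observation that the fourth containment cannot use the Lemma \ref{lem:Xinclosure}-type perturbation but instead needs a direction $u^*\in\Im(x)$ with $\llangle u^*,v\rrangle\neq0$ and $\llangle u^*,R'\rrangle=0$, followed by tracking the deformed radical $R'\oplus\C\bigl(v+O(\alpha)f_1\bigr)$, is the right completion of the omitted details. Two cosmetic points only: the skew block you analyze has size $2h+1$ (not $2h+2$), and in the first three cases the vector $w$ should be taken outside $F_k+\Im(x)$ rather than merely outside $\Im(x)$ --- its existence following from the target's nonemptiness inequality $k-k_2\leq n_1$ exactly as in Lemma \ref{lem:Xinclosure}.
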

\begin{proof}
This is entirely analogous to the proof of Lemma \ref{lem:Xinclosure}, by choosing for each case an appropriate basis and a vector to be deformed, details are omitted.
\end{proof}
Lemma \ref{lem:XYclosure} then implies, like it happened in Section \ref{sec:mu-empty}, that there is one irreducible piece (either $\cX^1(k_2,h)$ or $\cY(k_2,h)$) of maximal dimension that contains all the other ones in its closure as positive codimension subvarieties, hence $\cB_{((\emptyset,2^{n_2'}1^{n_{1}'}),\alpha')}^{((1^{n_2},1^{n_1+n_2}),\alpha)}$ is irreducible. Notice that by \eqref{eq:diff-dd'-dimX} and \eqref{eq:diff-dd'-dimY} we have that if $n_2'=n_2-k_2+h>0$, then even the piece of maximal dimension will be of dimension strictly less than $d_{(\mu,\nu)}^{\alpha} - d_{(\mu',\nu')}^{\alpha'}$, and the shapes are not nested $(\emptyset,2^{n_{2}'}1^{n_{1}'})\not\subset (1^{n_2},1^{n_1+n_2})$. When $n_2'=0$, we have that the piece of maximal dimension is $\cX^1(n_2,0)$ if and only if $h=0$ is the minimal $h$, if and only if 
$n_1'\geq n_2$, and otherwise it is $\cY(2n_2-n_1',n_2-n_1')$ if and only if $n_2>n_1'$. If $n_1'\geq n_2$, then $\dim\cB_{((\emptyset,1^{n_{1}'}),\alpha')}^{((1^{n_2},1^{n_1+n_2}),\alpha)}=\dim\cX(n_2,0)=d_{(\mu,\nu)}^{\alpha} - d_{(\mu',\nu')}^{\alpha'}$. Otherwise, with $n_2'=0$ and $n_2>n_1'$ we have that the minimal $h$, is $h=n_2-n'_1$, which gives 
\begin{align*}
h&= n_2-n_1' \\
h&= n_2-(n_1-k_1)-(k_2-2h) \\
h&= n_2-n_1+k_1-k_2+2h \\
0&= n_2-k_2+h-n_1+k_1\\
0&=-n_1+k_1\\
n_1&=k_1,
\end{align*}
so that indeed $\dim\cB_{((\emptyset,1^{n_{1}'}),\alpha')}^{((1^{n_2},1^{n_1+n_2}),\alpha)}=\dim\cY(2n_2-n_1',n_2-n_1')=d_{(\mu,\nu)}^{\alpha} - d_{(\mu',\nu')}^{\alpha'}$.
\begin{example}Here $n_1=2$, $n_2=3$, $k=4$. In the first example $n'_1=2$, $n_2'=1>0$, so the diagrams are not nested, in the second example $n'_1=4$, $n'_2=0$ so the diagrams are nested and  the boxes are removed from different rows of the two tableaux. 
$$(\emptyset,2^{n'_2}1^{n'_1})=\left(\emptyset,\young(~~,~,~)\right)\not\subset (1^{n_2},1^{n_1+n_2})=\left(\young(~,~,~),\young(~,~,~,~,~)\right);$$
$$(\emptyset,2^{n'_2}1^{n'_1})=\left(\emptyset,\young(~,~,~,~)\right)\subset (1^{n_2},1^{n_1+n_2})=\left(\young(~,~,~),\young(~,~,~,~,~)\right),\quad \left(\young(\xx,\xx,\xx),\young(~,~,~,~,\xx)\right).$$
\end{example}

\subsubsection{}Suppose that $(\mu',\nu')=(1^{n_2'},1^{n_2'+n_1'})=(1^{n_2-k_2+h},1^{n_1+n_2-k_1-h})$, with $n_2'=n_2-k_2+h>0$. Then by Section \ref{subsub:v-in-Im}, we have $v\not\in F_k$, and $(F_k+\C v)\cap x(F_k^\perp)\supsetneq F_k\cap x(F_k^\perp)$, or equivalently that $F_k\cap x(F_k^\perp)\subset (\C v)^{\pperp}$, by Lemma \ref{lem:vImvnotinFk}. In this case we have that $k_2<2n_2$.
We define the variety
\begin{multline*}\mathcal{X}^2(k_2,h)=\{F_k\in\Gr_k^\perp(\ker(x))~|~\dim(F_k\cap \Im(x))=k_2,\dim(F_k\cap x(F_k^\perp))=k_2-2h, \\ ~v\not\in F_k,~F_k\cap x(F_k^\perp)\subset (\C v)^{\pperp} \}.\end{multline*}
Notice that $(\mathbb{C}v)^{\pperp}$ is a $2n_2-1$-dimensional space, and the restriction of $\llangle, \rrangle$ to it has rank $2n_2-2$ ($r=n_2-1$). It follows that $F_k\cap x(F_k^{\perp})\in Z:= \{S\in\Gr_{k_2-2h}^{\pperp}((\C v)^{\pperp})~|~S\cap \C v=0\}$, which is nonempty because $n_2-k_2+h>0$ implies that $k_2-2h<2n_2-1$, so it is an open dense subvariety of  $\Gr_{k_2-2h}^{\pperp}((\C v)^{\pperp})$, hence irreducible of dimension
\begin{equation*} 
\dim\Gr_{k_2-2h,2n_2-1}^{\perp n_2-1}=(k_{2}-2h)(2n_{2}-1-k_2+2h)-\tfrac{1}{2}(k_{2}-2h)(k_{2}-2h-1).
\end{equation*}
We then have
$$\mathcal{X}^2(k_2,h)=\{F_k\in (\pi_1^k)^{-1}(\pi_2^{k_2})^{-1}(Z)~|~v\not\in\pi_1^k (F_k)\}$$
Over every point $S\in Z$, we consider only the points $U\in(\pi_2^{k_2})^{-1}(S)$ such that $v\not\in U$, i.e. we are choosing $U/S\in \Gr_{2h}(S^{\pperp}/S)$, such that $U/S\cap\C v/S=0$ (where $\C v/S$ is the image of $\C v$ under the projection onto $S^{\pperp}/S$) so each fibre, if nonempty, is an open subvariety of a Grassmannian, hence irreducible of dimension
$$\dim\Gr_{2h,2n_2-2(k_2-2h)}= 2h(2n_{2} - 2k_{2}+2h).$$
This fibre is nonempty if and only if it is possible to choose $U/S$ that does not contain $\C v/S$, if and only if 
\begin{align*}
 2h&<2n_2-2(k_2-2h)\\
 2h&<2n_2-2k_2+4h\\
 0&<2n_2-2k_2+2h\\
 0&<n_2-k_2+h.
\end{align*}
Finally, over a point $U\in(\pi_2^{k_2})^{-1}\left(Z\right)$, (with $v\not\in U$) we have that
$(\pi_1^k)^{-1}(U)$ is isomorphic to an open subvariety of $\Gr^\perp_{k_1}(\ker(x)/U)$ so, as in previous sections, it is irreducible of dimension 
$$\dim\Gr_{k_1,2n_1+2n_2-k_2}^{\perp n_1}=k_{1}(2n_{2}+2n_{1} - k_{2} - k_{1}) - \tfrac{1}{2}k_{1}(k_{1}-1).$$
In conclusion $\mathcal{X}^2(k_2,h)$ is an irreducible variety of dimension
\begin{align}\label{eq:dim-var-xnot0-vnotF-1}
    &(k_{2}-2h)(2n_{2}-1-k_2+2h)-\tfrac{1}{2}(k_{2}-2h)(k_{2}-2h-1)+2h(2n_{2} - 2k_{2}+2h)+\\
    &+k_{1}(2n_{2}+2n_{1} - k_{2} - k_{1}) - \tfrac{1}{2}k_{1}(k_{1}-1). \notag
\end{align}

For $(\mu,\nu)=(1^{n_2},1^{n_1+n_2})$, $(\mu',\nu')=(1^{n_{2}-k_{2}+h},1^{n_{1}+n_2-k_{1}-h})$ we compute the difference : 

\begin{equation}\label{eq:diff-dd'-dimX-1}
	\begin{split}
    d_{(\mu,\nu)}^{\alpha} - d_{(\mu',\nu')}^{\alpha'} -\dim\mathcal{X}^2(k_2,h)&=2N(2^{n_2}1^{n_1})+n_2+n_1- \frac{1}{2} \sum_{i\geq 1} (\alpha_{i}^{2} - \alpha_{i})-2N(2^{n_{2}-k_{2}+h}1^{n_{1}-k_{1}+k_{2}-2h})\\
    &\quad -(n_{1}+n_2-k_{1}-h)+ \frac{1}{2} \sum_{i\geq 2} (\alpha_{i}^{2} - \alpha_{i})-\eqref{eq:dim-var-xnot0-vnotF-1}\\
    & = (n_{1}+ n_{2})(n_{1} + n_{2} - 1) + n_{2}(n_{2}-1) + n_{2} + n_{1}- \tfrac{1}{2}k(k-1) \\
    &\quad -(n_{1}+ n_{2}-k_{1}-h) (n_{1}+ n_{2}-k_{1}-h - 1) \\
    &\quad - (n_{2}-k_{2}+h)(n_{2}-k_{2}+h-1) \\
    & \quad - (n_{1}+n_2-k_{1}-h)-\eqref{eq:dim-var-xnot0-vnotF-1}\\
    &= 2h(n_{1} - k_{1}).
    \end{split}
\end{equation}

Since $h\geq 0$ and $n_1\geq k_1$, this difference is always greater or equal to zero and it equals zero if and only if $h=0$ or $n_1=k_1$.

For fixed $0\leq n_1'\leq n_1+n_2$, $0< n_2'\leq n_2$ such that $n_1'+2n_2'+k=n_1+2n_2$, we have
\begin{align*}\cB_{((1^{n_{2}'},1^{n_{1}'+n_2'}),\alpha')}^{((1^{n_2},1^{n_1+n_2}),\alpha)}=\coprod_{\substack{\max\{0,k-n_1\}\leq k_2\leq\min\{k,2n_2-1\} \\ \max\{0,k_2-n_2+1\}\leq h\leq \frac{k_2}{2} \\ k_2-h=n_2-n_2'}}\mathcal{X}^2(k_2,h)=\coprod_{\substack{\max\{0,n_2-n_1'-n_2'\}\leq h \\
h\leq\min\{k-(n_2-n_2'), n_2-n_2'\}}}\mathcal{X}^2(h+n_2-n_2',h).
\end{align*}
Again, the inequalities follow from Lemma \ref{lem:ineq} and from the fact that $k_2<2n_2$ and $n_2-k_2+h>0$. As in previous sections, we have that if $\cX^2(k_2,h)\neq\emptyset\neq\cX^2(k_2+1,h+1)$, then $\cX^2(k_2+1,h+1)\subset\overline{\cX^2(k_2,h)}$, so there is one irreducible piece of maximal dimension that contains all the other ones in its closure as positive codimension subvarieties. By \eqref{eq:diff-dd'-dimX-1}, the piece of maximal dimension is $\mathcal{X}^2(h+n_2-n_2',h)$ for minimal $h$, which equals zero if and only if $n_1'+n_2'\geq n_2$. Otherwise, if $n_1'+n_2'<n_2$, then the minimal $h$ is $h=n_2-n_1'-n_2'$ which gives us 
\begin{align*}
h&= n_2-n_1'-n_2' \\
h&= n_2-(n_1-k_1)-(k_2-2h)-(n_2-k_2+h) \\
h&= n_2-n_1+k_1-k_2+2h-n_2+k_2-h \\
0&=-n_1+k_1\\
n_1&=k_1.
\end{align*}
So in either case $h(n_1-k_1)=0$ when $h$ is minimal and we have indeed that $\cB_{((1^{n_{2}'},1^{n_{1}'+n_2'}),\alpha')}^{((1^{n_2},1^{n_1+n_2}),\alpha)}$ is an irreducible variety of dimension $d_{(\mu,\nu)}^{\alpha} - d_{(\mu',\nu')}^{\alpha'}$ and the diagram of $(1^{n_{2}'},1^{n_{1}'+n_2'})$ is always obtained by removing a vertical strip from $(1^{n_{2}},1^{n_{1}+n_2})$.

\subsubsection{}Suppose that $(\mu',\nu')=(1^{n_2'+n_1'},1^{n_2'})=(1^{n_1+n_2-k_1-h},1^{n_2-k_2+h})$, with $n_1'=n_1-k_1+k_2-2h>0$. Then by Section \ref{subsub:v-in-Im}, we have $v\not\in F_k$, and $(F_k+\C v)\cap x(F_k^\perp)= F_k\cap x(F_k^\perp)$, or equivalently that $F_k\cap x(F_k^\perp)\not\subset (\C v)^{\pperp}$, by Lemma \ref{lem:vImvnotinFk}. In this case we have that $k_2<2n_2$.

We define the variety
\begin{multline*}\mathcal{X}^3(k_2,h)=\{F_k\in\Gr_k^\perp(\ker(x))~|~\dim(F_k\cap \Im(x))=k_2,\dim(F_k\cap x(F_k^\perp))=k_2-2h, \\ ~v\not\in F_k,~F_k\cap x(F_k^\perp)\not\subset (\C v)^{\pperp} \}.\end{multline*}
Notice that $F_k\cap x(F_k^{\perp})\in Z:= \{S\in\Gr^{\pperp}_{k_2-2h}(\Im(x))~|~S\not\subset (\C v)^{\pperp},~S\cap \C v=0\}$ which is impossible if $k_2-2h=0$, and otherwise, when $k_2-2h>0$, is an open dense subvariety of  $\Gr_{k_2-2h}^{\pperp}(\Im(x))$, hence irreducible of dimension
$$(k_{2} - 2h)(2n_{2} - k_{2} +2h) - \tfrac{1}{2} (k_{2}-2h) (k_{2}-2h-1).$$
We then have
$$\mathcal{X}^3(k_2,h)=\{F_k\in (\pi_1^k)^{-1}(\pi_2^{k_2})^{-1}(Z)~|~v\not\in\pi_1^k (F_k)\}=(\pi_1^k)^{-1}(\pi_2^{k_2})^{-1}(Z)$$
because, if $S\in Z$,  since $S\not\subset (\C v)^{\pperp}$, then $\C v\not\in S^{\pperp}$, hence for all $U\in(\pi_2^{k_2})^{-1}(S)=\Gr_{2h}(S^{\pperp}/S)$ automatically $\C v\not\subset U$. So we get, for each $S\in Z$, that the fibre is irreducible and
$$\dim(\pi_2^{k_2})^{-1}(S)=\dim\Gr_{2h,2n_2-2(k_2-2h)}= 2h(2n_{2} - 2k_{2}+2h).$$
Finally, over a point $U\in(\pi_2^{k_2})^{-1}\left(Z\right)$, we have that
$(\pi_1^k)^{-1}(U)$ is isomorphic to an open dense subvariety of $\Gr^\perp_{k_1}(\ker(x)/U)$ so, as in previous sections, it is irreducible of dimension 
$$\dim\Gr_{k_1,2n_1+2n_2-k_2}^{\perp n_1}=k_{1}(2n_{2}+2n_{1} - k_{2} - k_{1}) - \tfrac{1}{2}k_{1}(k_{1}-1).$$
In conclusion $\mathcal{X}^3(k_2,h)$ is an irreducible variety of dimension
\begin{align}\label{eq:dim-var-xnot0-vnotF-2}
    &(k_{2}-2h)(2n_{2}-k_2+2h)-\tfrac{1}{2}(k_{2}-2h)(k_{2}-2h-1)+2h(2n_{2} - 2k_{2}+2h)+\\
    &+k_{1}(2n_{2}+2n_{1} - k_{2} - k_{1}) - \tfrac{1}{2}k_{1}(k_{1}-1). \notag
\end{align}
For $(\mu,\nu)=(1^{n_2},1^{n_1+n_2})$, $(\mu',\nu')=(1^{n_{1}+n_2-k_{1}-h}, 1^{n_{2}-k_{2}+h})$ we compute the difference : 

\begin{equation}\label{eq:diff-dd'-dimX-2}
	\begin{split}
    d_{(\mu,\nu)}^{\alpha} - d_{(\mu',\nu')}^{\alpha'} -\dim\mathcal{X}^3(k_2,h)&=2N(2^{n_2}1^{n_1})+n_2+n_1- \frac{1}{2} \sum_{i\geq 1} (\alpha_{i}^{2} - \alpha_{i})-2N(2^{n_{2}-k_{2}+h}1^{n_{1}-k_{1}+k_{2}-2h})\\
    &\quad -(n_2-k_{2}+h)+ \frac{1}{2} \sum_{i\geq 2} (\alpha_{i}^{2} - \alpha_{i})-\eqref{eq:dim-var-xnot0-vnotF-2}\\
    & = (n_{1}+ n_{2})(n_{1} + n_{2} - 1) + n_{2}(n_{2}-1) + n_{2} + n_{1}- \tfrac{1}{2}k(k-1) \\
    &\quad -(n_{1}+ n_{2}-k_{1}-h) (n_{1}+ n_{2}-k_{1}-h - 1) \\
    &\quad - (n_{2}-k_{2}+h)(n_{2}-k_{2}+h-1) \\
    & \quad - (n_{2}-k_{2}+h)-\eqref{eq:dim-var-xnot0-vnotF-2}\\
    &= (2h+1)(n_{1} - k_{1}).
    \end{split}
\end{equation}
Since $h\geq 0$ and $n_1\geq k_1$, this difference is always greater or equal to zero and it equals zero if and only if $n_1=k_1$.

For fixed $0< n_1'\leq n_1+n_2$, $0\leq n_2'\leq n_2$ such that $n_1'+2n_2'+k=n_1+2n_2$, we have
\begin{align*}\cB_{((1^{n_{1}'+n'_2}, 1^{n_{2}'}),\alpha')}^{((1^{n_2},1^{n_1+n_2}),\alpha)}=\coprod_{\substack{\max\{0,k-n_1\}\leq k_2\leq\min\{k,2n_2-1\} \\ \max\{0,k_2-n_2\}\leq h< \frac{k_2}{2} \\ k_2-h=n_2-n_2'}}\mathcal{X}^3(k_2,h)=\coprod_{\substack{\max\{k-n_1,n_2-n_2'\}\leq k_2 \\
k_2\leq\min\{k, 2n_2-2n_2',2n_2-1\}}}\mathcal{X}^3(k_2,k_2-n_2+n_2').
\end{align*}
As in previous sections, the inequalities follow from Lemma \ref{lem:ineq} plus the stricter inequalities that we found in this section, also if $\cX^3(k_2,h)\neq\emptyset\neq\cX^3(k_2+1,h+1)$, then $\cX^3(k_2+1,h+1)\subset\overline{\cX^3(k_2,h)}$. It follows that there is one irreducible piece of maximal dimension that contains all the other ones in its closure as positive codimension subvarieties and hence $\cB_{((1^{n_{1}'+n'_2}, 1^{n_{2}'}),\alpha')}^{((1^{n_2},1^{n_1+n_2}),\alpha)}$ is an irreducible variety. By \eqref{eq:diff-dd'-dimX-2} we have that the piece of maximal dimension is $\mathcal{X}^3(k_2,k_2-n_2+n_2')$ for minimal $k_2$. 
Notice that the shape $(1^{n_{1}'+n'_2}, 1^{n_{2}'})$ is contained in the shape $(1^{n_2},1^{n_1+n_2})$ (and hence obtained by removing boxes appropriately) if and only if $n_2\geq n_1'+n_2'$,
\begin{align*}
n_2 & \geq n_1'+n_2' \\
 n_2-n_1'-n_2'&\geq 0 \\
 2n_2-n_2+n_1-n_1-n_1'-2n_2'+n_2'-k+k&\geq 0\\
 n-n_2-n_1-n+n_2'+k&\geq 0\\
 k-n_1&\geq n_2-n_2'
\end{align*}
if and only if $\max\{k-n_1,n_2-n_2'\}=k-n_1$, if and only if the minimal $k_2$ is $k_2=k-n_1$, if and only if 
\begin{align*}
k_2& =k-n_1 \\
    k_2&=k_1+k_2-n_1 \\
    0 &= k_1-n_1 \\
    n_1&=k_1
\end{align*}
and as a conclusion $\dim\cB_{((1^{n_{1}'+n'_2}, 1^{n_{2}'}),\alpha')}^{((1^{n_2},1^{n_1+n_2}),\alpha)}=d_{(\mu,\nu)}^{\alpha} - d_{(\mu',\nu')}^{\alpha'}$ if and only if the shape is obtained by removing boxes as desired, and is strictly lower otherwise.
\begin{example}Here $n_1=2$, $n_2=3$. In the first example $k=2$, $n'_1+n'_2=2+2>3=n_2$, so the diagrams are not nested, in the second example $k=5$, $n'_1+n'_2=1+1=2\leq n_2$, so the diagrams are nested and  the boxes are removed from different rows of the two tableaux. 
$$(1^{n'_1+n'_2},1^{n'_1})=\left(\young(~,~,~,~),\young(~,~)\right)\not\subset (1^{n_2},1^{n_1+n_2})=\left(\young(~,~,~),\young(~,~,~,~,~)\right);$$
$$(1^{n'_1+n'_2},1^{n'_1})=\left(\young(~,~),\young(~)\right)\subset (1^{n_2},1^{n_1+n_2})=\left(\young(~,~,~),\young(~,~,~,~,~)\right),\quad \left(\young(~,~,\xx),\young(~,\xx,\xx,\xx,\xx)\right).$$
\end{example}

\subsection{\underline{$\mu= 1^{n_1+n_2}, \nu=1^{n_2}$}}
Here $n_1,n_2>0$. In this case we have $x\neq 0$, $x^2=0$ and $v\in\ker(x)\setminus \Im(x)$, so $\ker(x)\cap(\C[x]v)^\perp=\ker(x)\cap(\C v)^\perp$ is a $2n_1+2n_2-1$-dimensional space where the restriction of $\langle,\rangle$ has rank $2n_1-2$ ($r=n_1-1$). We define $k_1$, $k_2$ and $h$ as in Section \ref{section:xnonzero}. According to Section \ref{subsub:v-in-ker}, there are three possibilities for $(\mu',\nu')$, which we now examine separately.
\subsubsection{}Suppose that $(\mu',\nu')=(\emptyset,2^{n_2'}1^{n_1'})=(\emptyset,2^{n_2-k_2+h}1^{n_1-k_1+k_2-2h})$. Then by Section \ref{subsub:v-in-Im}, we have $v\in F_k\setminus F_k\cap\Im(x)$, which implies that $k_1\geq 1$. We define the following variety
\begin{multline*}
\mathcal{X}^4(k_2,h)=\{F_k\in\Gr_k^\perp(\ker(x)\cap(\C[x]v)^\perp)~|~\dim(F_k\cap \Im(x))=k_2,\\ \dim(F_k\cap x(F_k^\perp))=k_2-2h,v\in F_k\setminus F_k\cap\Im(x) \} 
\end{multline*}
Then
$$\mathcal{X}^4(k_2,h)=\{F_k\in(\pi_1^k)^{-1}(\pi_2^{k_2})^{-1}\left(\Gr_{k_2-2h}^{\pperp}(\Im(x))\right)~|~v\in F_k\}.$$
The base of the iterated fibre bundle is the isotropic Grassmannian $\Gr_{k_2-2h}^{\pperp}(\Im(x))$ which is irreducible of dimension
$$(k_2-2h)(2n_2-k_2+2h) - \tfrac{1}{2}(k_2-2h)(k_2-2h-1).$$
Over each point $S\in \Gr_{k_2-2h}^{\pperp}(\Im(x))$, we have $(\pi_2^{k_2})^{-1}(S)\simeq \Gr_{2h}(S^{\pperp}/S)$, so each fibre is irreducible of dimension
$$ 2h(2n_{2} - 2k_{2}+2h).$$
Finally, for each $U\in(\pi_2^{k_2})^{-1}\left(\Gr_{k_2-2h}^{\pperp}(\Im(x))\right)$, we only consider the points $F\in (\pi_1^k)^{-1}(U)$ such that $v\in F$, so we are choosing 

$$F/(U+\C v)\in\Gr_{k_1-1}^\perp(\ker(x)\cap(\C[x]v)^\perp/(U+\C v))\simeq \Gr_{k_1-1,2n_1  + 2n_2 - k_2 - 2}^{\perp,n_1-1}$$
which is irreducible of dimension $(k_1-1)(2n_2+2n_1-k_2-k_1-1)-\tfrac{1}{2}(k_{1}-1)(k_{1}-2)$.

In conclusion, $\mathcal{X}^4(k_2,h)$ is an irreducible variety of dimension 
\begin{align}\label{eq:dim-var-vnotIm-vinF}
    & (k_2-2h)(2n_2-k_2+2h) - \tfrac{1}{2}(k_2-2h)(k_2-2h-1)+2h(2n_{2} - 2k_{2}+2h)+\\
    &~+ (k_1-1)(2n_2+2n_1-k_2-k_1-1)-\tfrac{1}{2}(k_{1}-1)(k_{1}-2). \notag
\end{align}
For $(\mu,\nu)=(1^{n_1+n_2},1^{n_2})$, $(\mu',\nu')=(\emptyset,2^{n_2-k_2+h}1^{n_1-k_1+k_{2}-2h})$ we compute the difference : 

\begin{equation}\label{eq:diff-dd'-dimX-3}
	\begin{split}
    d_{(\mu,\nu)}^{\alpha} - d_{(\mu',\nu')}^{\alpha'} -\dim\mathcal{X}^4(k_2,h)&=2N(2^{n_2}1^{n_1})+n_2- \frac{1}{2} \sum_{i\geq 1} (\alpha_{i}^{2} - \alpha_{i})-2N(2^{n_{2}-k_{2}+h}1^{n_{1}-k_{1}+k_{2}-2h})\\
    &\quad -2(n_2-k_{2}+h)-(n_1-k_1+k_{2}-2h)+ \frac{1}{2} \sum_{i\geq 2} (\alpha_{i}^{2} - \alpha_{i})-\eqref{eq:dim-var-vnotIm-vinF}\\
    & = (n_{1}+ n_{2})(n_{1} + n_{2} - 1) + n_{2}(n_{2}-1) + n_{2}- \tfrac{1}{2}k(k-1) \\
    &\quad -(n_{1}+ n_{2}-k_{1}-h) (n_{1}+ n_{2}-k_{1}-h - 1) \\
    &\quad - (n_{2}-k_{2}+h)(n_{2}-k_{2}+h-1) \\
    & \quad -2(n_2-k_{2}+h)-(n_1-k_1+k_{2}-2h)-\eqref{eq:dim-var-vnotIm-vinF}\\
    &= (2h+1)(n_{1} - k_{1})+(n_2-k_2+h).
    \end{split}
\end{equation}
which is always greater or equal to zero and equals zero if and only if $n_1=k_1$ and $n_2-k_2+h=0$.

For fixed $0\leq n_1'\leq n_1+n_2$, $0\leq n_2'\leq n_2$ such that $n_1'+2n_2'+k=n_1+2n_2$, we have
\begin{align*}\cB_{((\emptyset,2^{n_{2}'}1^{n_1'}),\alpha')}^{((1^{n_1+n_2},1^{n_2}),\alpha)}=\coprod_{\substack{\max\{0,k-n_1\}\leq k_2\leq\min\{k-1,2n_2\} \\ \max\{0,k_2-n_2\}\leq h\leq \frac{k_2}{2} \\ k_2-h=n_2-n_2'}}\mathcal{X}^4(k_2,h)=\coprod_{\substack{\max\{k-n_1,n_2-n_2'\}\leq k_2 \\
k_2\leq\min\{k-1, 2n_2-2n_2'\}}}\mathcal{X}^4(k_2,k_2-n_2+n_2')
\end{align*}
As before, if $\cX^4(k_2,h)\neq\emptyset\neq\cX^4(k_2+1,h+1)$, then $\cX^4(k_2+1,h+1)\subset\overline{\cX^4(k_2,h)}$. It follows that there is one maximal dimensional piece that contains the others in its closure as positive codimension subvarieties, hence $\cB_{((\emptyset,2^{n_{2}'}1^{n_1'}),\alpha')}^{((1^{n_1+n_2},1^{n_2}),\alpha)}$ is irreducible, and its dimension is the dimension of $\mathcal{X}^4(k_2,k_2-n_2+n_2')$ for minimal $k_2$. If $n_2'=n_2-k_2+h>0$, then all pieces are strictly lower dimensional than $d_{(\mu,\nu)}^{\alpha} - d_{(\mu',\nu')}^{\alpha'}$ and the shape of $(\mu',\nu')$ is not contained in the shape of $(\mu,\nu)$. Otherwise, when $n_2'=0$, then $n=n_1'+k$ so we have that the minimal $k_2$ is $k-n_1$ if and only if $k-n_1\geq n_2$,
\begin{align*}
 k-n_1\geq n_2 \\
 k\geq n_2+n_1 \\
 n-n_1'\geq 2n_2+n_1-n_2 \\
 n-n_1'\geq n-n_2 \\
 n_2\geq n_1'
\end{align*}
if and only if the shape $(\mu',\nu')$ is contained in the shape of $(\mu,\nu)$. Also notice that $k_2=k-n_1$, is equivalent to $n_1=k_1$ so that indeed when $n_1'\leq n_2$ we have $\dim\cB_{((\emptyset,2^{n_{2}'}1^{n_1'}),\alpha')}^{((1^{n_1+n_2},1^{n_2}),\alpha)}=\dim\mathcal{X}^4(k-n_1,k-n_1-n_2)=d_{(\mu,\nu)}^{\alpha} - d_{(\mu',\nu')}^{\alpha'}$, and is strictly lower otherwise.
\begin{example}Here $n_1=2$, $n_2=3$. In the first example $k=4$, $n'_1=2$, $n_2'=1>0$, in the second example $k=4$ and, while $n'_2=0$, $n'_1=4>3=n_2$, so in both cases the diagrams are not nested. In the third example $k=6$, $n'_2=0$ and $n'_1=2\leq 3=n_2$, so the diagrams are nested and  the boxes are removed from different rows of the two tableaux. 
$$(\emptyset,2^{n'_2}1^{n'_1})=\left(\emptyset,\young(~~,~,~)\right)\not\subset (1^{n_1+n_2},1^{n_2})=\left(\young(~,~,~,~,~),\young(~,~,~)\right);$$ $$(\emptyset,2^{n'_2}1^{n'_1})=\left(\emptyset,\young(~,~,~,~)\right)\not\subset (1^{n_1+n_2},1^{n_2})=\left(\young(~,~,~,~,~),\young(~,~,~)\right);$$
$$(\emptyset,2^{n'_2}1^{n'_1})=\left(\emptyset,\young(~,~)\right)\subset (1^{n_1+n_2},1^{n_2})=\left(\young(~,~,~,~,~),\young(~,~,~)\right),\quad \left(\young(\xx,\xx,\xx,\xx,\xx),\young(~,~,\xx)\right).$$
\end{example}

\subsubsection{}Suppose that $(\mu',\nu')=(1^{n_2'},1^{n_2'+n_1'})=(1^{n_2-k_2+h},1^{n_1+n_2-k_1-h})$, with $n_2'=n_2-k_2+h>0$. Then by Section \ref{subsub:v-in-Im}, we have $v\not\in F_k$, and $(F_k+\C v)\cap x(F_k^\perp)\supsetneq F_k\cap x(F_k^\perp)$, or equivalently that $(v+F_k)\cap x(F_k^\perp)\neq \emptyset$, by Lemma \ref{lem:vnotImvnotinFk}. In this case we have that $k_1\geq 1$ (otherwise if $F_k\subset \Im(x)$, then $(v+F_k)\cap \Im(x)=\emptyset$). Notice that since $v\not\in\Im(x)$, the condition that $v\not\in F_k,~(v+F_k)\cap x(F_k^\perp)\neq\emptyset$ is only possible if $x(F_k^\perp)\not\subset F_k$, equivalently,
\begin{align*}x(F_k^\perp)\setminus F_k\neq \emptyset \iff
x(F_k^\perp)\setminus F_k\cap x(F_k^\perp) \neq \emptyset \iff \dim(x(F_k^\perp))>\dim(F_k\cap x(F_k^\perp)).
\end{align*} 
Since $x(F_k^\perp)=(F_k\cap\Im(x))^{\pperp}$ by Lemma \ref{lem:F-rad-j}, the dimension condition can be expressed as
$$2n_2-k_2>k_2-2h \iff 2n_2-2k_2+2h>0 \iff n_2-k_2+h>0$$
which has to be satisfied.
We define the variety
\begin{multline*}\mathcal{X}^5(k_2,h)=\{F_k\in\Gr_k^\perp(\ker(x)\cap(\C[x]v)^\perp)~|~\dim(F_k\cap \Im(x))=k_2, \\ \dim(F_k\cap x(F_k^\perp))=k_2-2h,~v\not\in F_k,~(v+F_k)\cap x(F_k^\perp)\neq \emptyset \}\end{multline*}
which we can write as
$$\mathcal{X}^5(k_2,h)=\{F_k\in(\pi_1^k)^{-1}(\pi_2^{k_2})^{-1}\left(\Gr_{k_2-2h}^{\pperp}(\Im(x))\right)~|~v\not\in F_k,(v+F_k)\cap (\pi_1^k(F_k))^{\pperp}\neq \emptyset\}.$$
The base of the iterated fibre bundle is the isotropic Grassmannian $\Gr_{k_2-2h}^{\pperp}(\Im(x))$ which is irreducible of dimension
$$(k_2-2h)(2n_2-k_2+2h) - \tfrac{1}{2}(k_2-2h)(k_2-2h-1).$$
Over each point $S\in \Gr_{k_2-2h}^{\pperp}(\Im(x))$, we have $(\pi_2^{k_2})^{-1}(S)\simeq \Gr_{2h}(S^{\pperp}/S)$, so each fibre is irreducible of dimension
$$ 2h(2n_{2} - 2k_{2}+2h).$$
Finally, for each $U\in(\pi_2^{k_2})^{-1}\left(\Gr_{k_2-2h}^{\pperp}(\Im(x))\right)$, we only consider the points $F\in (\pi_1^k)^{-1}(U)$ such that $v\not\in F$, $(v+F)\cap (U)^{\pperp}\neq \emptyset$. 

Suppose $L=(v+F)\cap (U)^{\pperp}\neq\emptyset$ and $w\in L$, then $w\in(U)^{\pperp}\setminus F\cap U^{\pperp}=(U)^{\pperp}\setminus U\cap U^{\pperp}$, viceversa given $w\in(U)^{\pperp}\setminus U\cap U^{\pperp}$, we have that $L\neq\emptyset$ if and only if $w=v+f$ with $f\in F$, if and only if $w-v\in F$. $L$ (if nonempty) is an affine space of dimension equal to $\dim F\cap U^{\pperp}=\dim U\cap U^{\pperp}$ and all points in $L$ are of the form $w+u$, with $w\in L$ and $u\in U\cap U^{\pperp}$. It follows that if $w,w'\in(U)^{\pperp}\setminus U\cap U^{\pperp}$, then $(U+\C(w-v))/U=(U+\C(w'-v))/U$ if and only if $w-w'\in U\cap U^{\pperp}$. 

So, we have an isomorphism of varieties
\begin{multline*}
\mathcal{D}:=\{(w,D)~|~w\in((U)^{\pperp}\setminus U\cap U^{\pperp})/(U\cap U^{\pperp}),D\in\Gr_{k_1-1}^\perp(\ker(x)\cap(\C[x]v)^\perp/(U+\C (w-v)),v\not\in D\}\\
\simeq \{F\in (\pi_1^k)^{-1}(U)~|~v\not\in F,~(v+F)\cap (U)^{\pperp}\neq \emptyset\}\end{multline*}
given by $(w,D)\mapsto \tilde{D}$, where $\tilde{D}$ is any pre-image of $D$ under the projection $\ker(x)\cap(\C[x]v)^\perp/U\to\ker(x)\cap(\C[x]v)^\perp/(U+\C (w-v)).$

The variety $\mathcal{D}$ is by definition a fibre bundle with the map $(w,D)\mapsto w$, with base $((U)^{\pperp}\setminus U\cap U^{\pperp})/U\cap U^{\pperp}=((U)^{\pperp}\setminus \pi_2^{k_2}(U))/\pi_2^{k_2}(U)$, which is an irreducible variety of dimension $\dim((U)^{\pperp})-\dim(U\cap U^{\pperp})=2n_2-k_2-(k_2-2h)=2n_2-2k_2+2h$ (isomorphic to a vector space minus the origin), and fibre isomorphic to an open dense subvariety (to guarantee that $v\not\in D$) of 
$$\Gr_{k_1-1}^\perp(\ker(x)\cap(\C[x]v)^\perp/(U+\C (w-v))\simeq \Gr_{k_1-1,2n_1  + 2n_2 - k_2 - 2}^{\perp,n_1-1}$$
which is irreducible of dimension $(k_1-1)(2n_2+2n_1-k_2-k_1-1)-\tfrac{1}{2}(k_{1}-1)(k_{1}-2)$.
In conclusion, $\mathcal{X}^5(k_2,h)$ is an irreducible variety of dimension 
\begin{align}\label{eq:dim-var-vnotIm-vnotF-1}
    & (k_2-2h)(2n_2-k_2+2h) - \tfrac{1}{2}(k_2-2h)(k_2-2h-1)+2h(2n_{2} - 2k_{2}+2h)+\\
    &~+2n_2-2k_2+2h+ (k_1-1)(2n_2+2n_1-k_2-k_1-1)-\tfrac{1}{2}(k_{1}-1)(k_{1}-2). \notag
\end{align}
For $(\mu,\nu)=(1^{n_1+n_2},1^{n_2})$, $(\mu',\nu')=(1^{n_2-k_2+h},1^{n_1+n_2-k_1-h})$ we compute the difference:
\begin{equation}\label{eq:diff-dd'-dimX-4}
	\begin{split}
    d_{(\mu,\nu)}^{\alpha} - d_{(\mu',\nu')}^{\alpha'} -\dim\mathcal{X}^5(k_2,h)&=2N(2^{n_2}1^{n_1})+n_2- \frac{1}{2} \sum_{i\geq 1} (\alpha_{i}^{2} - \alpha_{i})-2N(2^{n_{2}-k_{2}+h}1^{n_{1}-k_{1}+k_{2}-2h})\\
    &\quad -(n_1+n_2-k_1-h)+ \frac{1}{2} \sum_{i\geq 2} (\alpha_{i}^{2} - \alpha_{i})-\eqref{eq:dim-var-vnotIm-vnotF-1}\\
    & = (n_{1}+ n_{2})(n_{1} + n_{2} - 1) + n_{2}(n_{2}-1) + n_{2}- \tfrac{1}{2}k(k-1) \\
    &\quad -(n_{1}+ n_{2}-k_{1}-h) (n_{1}+ n_{2}-k_{1}-h - 1) \\
    &\quad - (n_{2}-k_{2}+h)(n_{2}-k_{2}+h-1) \\
    & \quad -(n_1+n_2-k_1-h)-\eqref{eq:dim-var-vnotIm-vnotF-1}\\
    &= (2h+1)(n_{1} - k_{1}).
    \end{split}
\end{equation}
which is always greater or equal to zero and equals zero if and only if $n_1=k_1$.
For fixed $0\leq n_1'\leq n_1+n_2$, $0\leq n_2'\leq n_2$ such that $n_1'+2n_2'+k=n_1+2n_2$, we have
\begin{align*}\cB_{((1^{n_{2}'},1^{n_1'+n_2'}),\alpha')}^{((1^{n_1+n_2},1^{n_2}),\alpha)}=\coprod_{\substack{\max\{0,k-n_1\}\leq k_2\leq\min\{k-1,2n_2\} \\ \max\{0,k_2-n_2\}\leq h\leq \frac{k_2}{2} \\ k_2-h=n_2-n_2'}}\mathcal{X}^5(k_2,h)=\coprod_{\substack{\max\{k-n_1,n_2-n_2'\}\leq k_2 \\
k_2\leq\min\{k-1, 2n_2-2n_2'\}}}\mathcal{X}^5(k_2,k_2-n_2+n_2')
\end{align*}
As before, if $\cX^5(k_2,h)\neq\emptyset\neq\cX^5(k_2+1,h+1)$, then $\cX^5(k_2+1,h+1)\subset\overline{\cX^5(k_2,h)}$. It follows that there is one irreducible piece of maximal dimension that contains all the other ones in its closure as positive codimension subvarieties, hence $\cB_{((1^{n_{2}'},1^{n_{1}'+n'_2}),\alpha')}^{((1^{n_1+n_2},1^{n_2}),\alpha)}$ is an irreducible variety. By \eqref{eq:diff-dd'-dimX-4} we have that the piece of maximal dimension is $\mathcal{X}^5(k_2,k_2-n_2+n_2')$ for minimal $k_2$. 
Notice that the shape $(1^{n_{2}'},1^{n_{1}'+n'_2})$ is contained in the shape $(1^{n_1+n_2},1^{n_2})$ (and hence obtained by removing boxes appropriately) if and only if $n_2-n_1'-n_2'\geq 0$
\begin{align*}
 n_2-n_1'-n_2'&\geq 0 \\
 2n_2-n_2+n_1-n_1-n_1'-2n_2'+n_2'-k+k&\geq 0\\
 n-n_2-n_1-n+n_2'+k&\geq 0\\
 k-n_1&\geq n_2-n_2'
\end{align*}
if and only if $\max\{k-n_1,n_2-n_2'\}=k-n_1$, if and only if the minimum possible $k_2$ is $k_2=k-n_1$, if and only if 
\begin{align*}
k_2& =k-n_1 \\
    k_2&=k_1+k_2-n_1 \\
    0 &= k_1-n_1 \\
    n_1&=k_1
\end{align*}
and as a conclusion $\dim\cB_{((1^{n_{1}'+n'_2}, 1^{n_{2}'}),\alpha')}^{((1^{n_2},1^{n_1+n_2}),\alpha)}=d_{(\mu,\nu)}^{\alpha} - d_{(\mu',\nu')}^{\alpha'}$ if and only if the shape is obtained by removing boxes as desired and is strictly lower otherwise.
\begin{example}Here $n_1=2$, $n_2=3$. In the first example $k=2$, $n'_1+n'_2=2+2>3=n_2$, so the diagrams are not nested, in the second example $k=5$, $n'_1+n'_2=1+1=2\leq n_2$, so the diagrams are nested and  the boxes are removed from different rows of the two tableaux. 
$$(1^{n'_1},1^{n'_1+n'_2})=\left(\young(~,~),\young(~,~,~,~)\right)\not\subset (1^{n_1+n_2},1^{n_2})=\left(\young(~,~,~,~,~),\young(~,~,~)\right);$$
$$(1^{n'_1},1^{n'_1+n'_2})=\left(\young(~),\young(~,~)\right)\subset (1^{n_1+n_2},1^{n_2})=\left(\young(~,~,~,~,~),\young(~,~,~)\right),\quad \left(\young(~,\xx,\xx,\xx,\xx),\young(~,~,\xx)\right).$$
\end{example}
\subsubsection{}Suppose that $(\mu',\nu')=(1^{n_2'+n_1'},1^{n_2'})=(1^{n_1+n_2-k_1-h},1^{n_2-k_2+h})$, with $n_1'=n_1-k_1+k_2-2h>0$. Then by Section \ref{subsub:v-in-Im}, we have $v\not\in F_k$, and $(F_k+\C v)\cap x(F_k^\perp)= F_k\cap x(F_k^\perp)$, or equivalently that $(v+F_k)\cap x(F_k^\perp)= \emptyset$, by Lemma \ref{lem:vnotImvnotinFk}.
We define the variety
\begin{multline*}\mathcal{X}^6(k_2,h)=\{F_k\in\Gr_k^\perp(\ker(x)\cap(\C[x]v)^\perp)~|~\dim(F_k\cap \Im(x))=k_2, \\ \dim(F_k\cap x(F_k^\perp))=k_2-2h,~(v+F_k)\cap x(F_k^\perp)=\emptyset \}\end{multline*}
which we can write as
$$\mathcal{X}^6(k_2,h)=\{F_k\in(\pi_1^k)^{-1}(\pi_2^{k_2})^{-1}\left(\Gr_{k_2-2h}^{\pperp}(\Im(x))\right)~|~(v+F_k)\cap (\pi_1^k(F_k))^{\pperp}= \emptyset\}.$$
The base of the iterated fibre bundle is the isotropic Grassmannian $\Gr_{k_2-2h}^{\pperp}(\Im(x))$ which is irreducible of dimension
$$(k_2-2h)(2n_2-k_2+2h) - \tfrac{1}{2}(k_2-2h)(k_2-2h-1).$$
Over each point $S\in \Gr_{k_2-2h}^{\pperp}(\Im(x))$, we have $(\pi_2^{k_2})^{-1}(S)\simeq \Gr_{2h}(S^{\pperp}/S)$, so each fibre is irreducible of dimension
$$ 2h(2n_{2} - 2k_{2}+2h).$$
Finally, for each $U\in(\pi_2^{k_2})^{-1}\left(\Gr_{k_2-2h}^{\pperp}(\Im(x))\right)$, we only consider the points $F\in (\pi_1^k)^{-1}(U)$ such that $(v+F)\cap (U)^{\pperp}= \emptyset$, which is an open dense subvariety of 
$$\Gr_{k_1}^\perp(\ker(x)\cap(\C[x]v)^\perp/U)\simeq \Gr_{k_1,2n_1+2n_2-1-k_2}^{\perp,n_1-1}$$
which is irreducible of dimension $k_1(2n_2+2n_1-k_2-k_1-1)-\tfrac{1}{2}k_{1}(k_{1}-1)$.

In conclusion, $\mathcal{X}^6(k_2,h)$ is an irreducible variety of dimension 
\begin{align}\label{eq:dim-var-vnotIm-vnotF-2}
    & (k_2-2h)(2n_2-k_2+2h) - \tfrac{1}{2}(k_2-2h)(k_2-2h-1)+2h(2n_{2} - 2k_{2}+2h)+\\
    &~k_1(2n_2+2n_1-k_2-k_1-1)-\tfrac{1}{2}k_{1}(k_{1}-1). \notag
\end{align}
For $(\mu,\nu)=(1^{n_1+n_2},1^{n_2})$, $(\mu',\nu')=(1^{n_1+n_2-k_1-h},1^{n_2-k_2+h})$ we compute the difference:
\begin{equation}\label{eq:diff-dd'-dimX-5}
	\begin{split}
    d_{(\mu,\nu)}^{\alpha} - d_{(\mu',\nu')}^{\alpha'} -\dim\mathcal{X}^6(k_2,h)&=2N(2^{n_2}1^{n_1})+n_2- \frac{1}{2} \sum_{i\geq 1} (\alpha_{i}^{2} - \alpha_{i})-2N(2^{n_{2}-k_{2}+h}1^{n_{1}-k_{1}+k_{2}-2h})\\
    &\quad -(n_2-k_2+h)+ \frac{1}{2} \sum_{i\geq 2} (\alpha_{i}^{2} - \alpha_{i})-\eqref{eq:dim-var-vnotIm-vnotF-2}\\
    & = (n_{1}+ n_{2})(n_{1} + n_{2} - 1) + n_{2}(n_{2}-1) + n_{2}- \tfrac{1}{2}k(k-1) \\
    &\quad -(n_{1}+ n_{2}-k_{1}-h) (n_{1}+ n_{2}-k_{1}-h - 1) \\
    &\quad - (n_{2}-k_{2}+h)(n_{2}-k_{2}+h-1) \\
    & \quad -(n_2-k_2+h)-\eqref{eq:dim-var-vnotIm-vnotF-2}\\
    &= 2h(n_{1} - k_{1}).
    \end{split}
\end{equation}
Since $h\geq 0$ and $n_1\geq k_1$, this difference is always greater or equal to zero and it equals zero if and only if $h=0$ or $n_1=k_1$.

For fixed $0< n_1'\leq n_1+n_2$, $0\leq n_2'\leq n_2$ such that $n_1'+2n_2'+k=n_1+2n_2$, we have
\begin{align*}\cB_{((1^{n_{1}'+n_2'},1^{n_{2}'}),\alpha')}^{((1^{n_1+n_2},1^{n_2}),\alpha)}=\coprod_{\substack{\max\{0,k-n_1\}\leq k_2\leq\min\{k,2n_2\} \\ \max\{0,k_2-n_2\}\leq h\leq \frac{k_2}{2} \\ k_2-h=n_2-n_2'}}\mathcal{X}^6(k_2,h)=\coprod_{\substack{\max\{0,n_2-n_1'-n_2'\}\leq h \\
h\leq\min\{k-(n_2-n_2'), n_2-n_2'\}}}\mathcal{X}^6(h+n_2-n_2',h)
\end{align*}
As in previous cases, if $\cX^6(k_2,h)\neq\emptyset\neq\cX^6(k_2+1,h+1)$, then $\cX^6(k_2+1,h+1)\subset\overline{\cX^6(k_2,h)}$. It follows that there is one irreducible piece of maximal dimension that contains all the other ones in its closure as positive codimension subvarieties, hence $\cB_{((1^{n_{2}'},1^{n_{1}'+n'_2}),\alpha')}^{((1^{n_1+n_2},1^{n_2}),\alpha)}$ is an irreducible variety. By \eqref{eq:diff-dd'-dimX-5}, the piece of maximal dimension is $\mathcal{X}^6(h+n_2-n_2',h)$ for minimal $h$. This minimal $h$ equals zero if and only if $n_1'+n_2'\geq n_2$. Otherwise, if $n_1'+n_2'<n_2$, then the minimal $h$ is $h=n_2-n_1'-n_2'$ which gives us 
\begin{align*}
h&= n_2-n_1'-n_2' \\
h&= n_2-(n_1-k_1)-(k_2-2h)-(n_2-k_2+h) \\
h&= n_2-n_1+k_1-k_2+2h-n_2+k_2-h \\
0&=-n_1+k_1\\
n_1&=k_1.
\end{align*}
So we always have that $\cB_{((1^{n_{1}'+n_2'},1^{n_{2}'}),\alpha')}^{((1^{n_1+n_2},1^{n_2}),\alpha)}$ is of dimension $d_{(\mu,\nu)}^{\alpha} - d_{(\mu',\nu')}^{\alpha'}$.

\subsection{\underline{$\mu=2^{n_2}1^{n_1}, \nu = \emptyset$}} Here $n_2>0$. In this case we have $x\neq 0$, $x^2=0$ and $v\not\in\ker(x)$, so $\ker(x)\cap(\C[x]v)^\perp=\ker(x)\cap(\C v)^\perp$ is a $2n_1+2n_2-1$-dimensional space where the restriction of $\langle,\rangle$ has rank $\max\{2n_1-2,0\}$ ($r=\max\{n_1-1,0\}$). We define $k_1$, $k_2$ and $h$ as in Section \ref{section:xnonzero}. According to Section \ref{subsub:v-not-ker}, there are two possibilities for $(\mu',\nu')$, which we now examine separately.
\subsubsection{}Suppose that $(\mu',\nu')=(1^{n_2'+n_1'},1^{n_2'})=(1^{n_1+n_2-k_1-h},1^{n_2-k_2+h})$
. Then by Section \ref{subsub:v-not-ker}, we have $0\neq xv\in F_k$, and since we know that $xv\in x(F_k^\perp)$ we necessarily have that $xv\in F_k\cap x(F_k^\perp)$, hence we have $k_2-2h>0$ and $n_1'=n_1-k_1+k_2-2h>0$.

We define the variety
\begin{multline*}\mathcal{X}^7(k_2,h)=\{F_k\in\Gr_k^\perp(\ker(x)\cap(\C[x]v)^\perp)~|~\dim(F_k\cap \Im(x))=k_2, \\ \dim(F_k\cap x(F_k^\perp))=k_2-2h,xv\in F_k\cap x(F_k^\perp) \}\end{multline*}
If $xv\in F_k\cap x(F_k^\perp)$, then  $(F_k\cap x(F_k^\perp))/\C xv\in \Gr_{k_{2} - 2h -1}^{\pperp}(\mathbb{C}(xv)^{\pperp}/\mathbb{C}xv).$
It follows that we have a description as an iterated vector bundle
$$\mathcal{X}^7(k_2,h)=(\pi_1^k)^{-1}(\pi_2^{k_2})^{-1}\left(\Gr_{k_2-2h-1}^{\pperp}((\mathbb{C}xv)^{\pperp}/\mathbb{C}xv)\right).$$
The base is an isotropic Grassmannian, hence an irreducible variety of dimension
\begin{equation} 
(k_{2} - 2h-1)(2n_{2} - k_{2}  + 2h -1) - \tfrac12(k_{2}-2h-1)(k_{2}-2h-2).
\end{equation}
Over every point $S\in\Gr_{k_2-2h}^{\pperp}((\mathbb{C}xv)^{\pperp}/\mathbb{C}xv)$, we have $(\pi_2^{k_2})^{-1}(S)\simeq \Gr_{2h}(S^{\pperp}/S)$, so each fibre is irreducible of dimension
$$ 2h(2n_{2} - 2k_{2}+2h).$$
Finally over a point $U\in(\pi_2^{k_2})^{-1}\left(\Gr_{k_2-2h-1}^{\pperp}((\mathbb{C}xv)^{\pperp}/\mathbb{C}xv)\right) $, we have that, when $n_1\geq 1$, then
$(\pi_1^k)^{-1}(U)$ is isomorphic to an open dense subvariety of $$\Gr_{k_1}^\perp(\ker(x)\cap(\C[x]v)^\perp/U)\simeq \Gr_{k_1,2n_1+2n_2-1-k_2}^{\perp,n_1-1}$$
which is irreducible of dimension $k_1(2n_2+2n_1-k_2-k_1-1)-\tfrac{1}{2}k_{1}(k_{1}-1)$. Notice that if $n_1=0$, then necessarily $k_1=0$ and $(\pi_1^k)^{-1}(U)$ is a single point, so the formula for the dimension works in this case as well.
In conclusion, $\mathcal{X}^7(k_2,h)$ is an irreducible variety of dimension 
\begin{align}\label{eq:dim-var-vnotKer-xvinF}
    & (k_{2} - 2h-1)(2n_{2} - k_{2}  + 2h -1) - \tfrac12(k_{2}-2h-1)(k_{2}-2h-2)
+2h(2n_{2} - 2k_{2}+2h)+\\
    &~k_1(2n_2+2n_1-k_2-k_1-1)-\tfrac{1}{2}k_{1}(k_{1}-1). \notag
\end{align}
For $(\mu,\nu)=(2^{n_2}1^{n_1},\emptyset)$, $(\mu',\nu')=(1^{n_1+n_2-k_1-h},1^{n_2-k_2+h})$ we compute the difference:
\begin{equation}\label{eq:diff-dd'-dimX-6}
	\begin{split}
    d_{(\mu,\nu)}^{\alpha} - d_{(\mu',\nu')}^{\alpha'} -\dim\mathcal{X}^7(k_2,h)&=2N(2^{n_2}1^{n_1})- \frac{1}{2} \sum_{i\geq 1} (\alpha_{i}^{2} - \alpha_{i})-2N(2^{n_{2}-k_{2}+h}1^{n_{1}-k_{1}+k_{2}-2h})\\
    &\quad -(n_2-k_2+h)+ \frac{1}{2} \sum_{i\geq 2} (\alpha_{i}^{2} - \alpha_{i})-\eqref{eq:dim-var-vnotKer-xvinF}\\
    & = (n_{1}+ n_{2})(n_{1} + n_{2} - 1) + n_{2}(n_{2}-1) - \tfrac{1}{2}k(k-1) \\
    &\quad -(n_{1}+ n_{2}-k_{1}-h) (n_{1}+ n_{2}-k_{1}-h - 1) \\
    &\quad - (n_{2}-k_{2}+h)(n_{2}-k_{2}+h-1) \\
    & \quad -(n_2-k_2+h)-\eqref{eq:dim-var-vnotKer-xvinF}\\
    &= h(2n_{1} - 2k_{1}+1)+n_2-k_2+h.
    \end{split}
\end{equation}
Since $h\geq 0$, $n_1\geq k_1$, and $n_2-k_2+h\geq 0$ this difference is always greater or equal to zero and it equals zero if and only if $h=0$ and $n_2-k_2+h=0$, (which means $n_2=k_2$).

For fixed $0< n_1'\leq n_1+n_2$, $0\leq n_2'\leq n_2$ such that $n_1'+2n_2'+k=n_1+2n_2$, we have
\begin{align*}\cB_{((1^{n_{1}'+n_2'},1^{n_{2}'}),\alpha')}^{((2^{n_2}1^{n_1},\emptyset),\alpha)}=\coprod_{\substack{\max\{0,k-n_1\}\leq k_2\leq\min\{k,2n_2\} \\ \max\{0,k_2-n_2\}\leq h< \frac{k_2}{2} \\ k_2-h=n_2-n_2'}}\mathcal{X}^7(k_2,h)=\coprod_{\substack{\max\{0,n_2-n_1'-n_2'\}\leq h \\
h\leq\min\{k-(n_2-n_2'), n_2-n_2'-1\}}}\mathcal{X}^7(h+n_2-n_2',h)
\end{align*}
As in previous cases, if $\cX^7(k_2,h)\neq\emptyset\neq\cX^7(k_2+1,h+1)$, then $\cX^7(k_2+1,h+1)\subset\overline{\cX^7(k_2,h)}$. It follows that there is one irreducible piece of maximal dimension that contains all the other ones in its closure as positive codimension subvarieties hence $\cB_{((1^{n_{1}'+n_2'},1^{n_{2}'}),\alpha')}^{((2^{n_2}1^{n_1},\emptyset),\alpha)}$ is irreducible. Notice that by \eqref{eq:diff-dd'-dimX-6} we have that the piece of maximal dimension is $\mathcal{X}(h+n_2-n_2',h)$ for minimal $h$.

If $n_2'=n_2-k_2+h>0$, then all pieces are strictly lower dimensional than $d_{(\mu,\nu)}^{\alpha} - d_{(\mu',\nu')}^{\alpha'}$ and the shape of $(\mu',\nu')$ is not contained in the shape of $(\mu,\nu)$. 

Otherwise, if $n_2'=0$, we have that the minimal $h$ is $\max\{0,n_2-n_1'\}$, and this equals zero if and only if $n_1'\geq n_2$, if and only if the shape $(1^{n_1'},\emptyset)$ is obtained from $(2^{n_2}1^{n_1},\emptyset)$ by removing boxes with no two of them being in the same row. 

In conclusion, $\cB_{((1^{n_{1}'+n_2'},1^{n_{2}'}),\alpha')}^{((2^{n_2}1^{n_1},\emptyset),\alpha)}$ is an irreducible variety of dimension less or equal to $d_{(\mu,\nu)}^{\alpha} - d_{(\mu',\nu')}^{\alpha'}$ with equality if and only if the shape is obtained by removing boxes in the appropriate way.
\begin{example}Here $n_1=2$, $n_2=3$. In the first example $k=4$, $n'_1=2$, $n_2'=1>0$, so the diagrams are not nested. In the second example $k=6$, and, while $n'_2=0$, $n'_1=2<3=n_2$, so some boxes are removed from the same row, while in the third example $k=4$, $n'_2=0$ and $n'_1=4\geq 3=n_2$, so the boxes are removed from different rows of the two tableaux. 
$$(1^{n'_1+n'_2},1^{n'_2})=\left(\young(~,~,~),\young(~)\right)\not\subset (2^{n_2}1^{n_1},\emptyset)=\left(\young(~~,~~,~~,:~,:~),\emptyset\right);$$ 
$$(1^{n'_1+n'_2},1^{n'_2})=\left(\young(~,~),\emptyset\right)\subset (2^{n_2}1^{n_1},\emptyset)=\left(\young(~~,~~,~~,:~,:~),\emptyset\right),\quad \left(\young(\xx ~,\xx ~,\xx \xx,:\xx,:\xx),\emptyset\right);$$
$$(1^{n'_1+n'_2},1^{n'_2})=\left(\young(~,~,~,~),\emptyset\right)\subset (2^{n_2}1^{n_1},\emptyset)=\left(\young(~~,~~,~~,:~,:~),\emptyset\right),\quad \left(\young(\xx ~,\xx ~,\xx ~,:~,:\xx),\emptyset\right).$$
\end{example}
\subsubsection{}Suppose that $(\mu',\nu')=(2^{n_2'}1^{n_1'},\emptyset)=(2^{n_2-k_2+h}1^{n_1-k_1+k_2-2h},\emptyset)$, with $n_2'=n_2-k_2+h>0$. Then by Section \ref{subsub:v-not-ker}, we have $0\neq xv\not\in F_k$. 
We define the variety
\begin{multline*}\mathcal{X}^8(k_2,h)=\{F_k\in\Gr_k^\perp(\ker(x)\cap(\C[x]v)^\perp)~|~\dim(F_k\cap \Im(x))=k_2, \\ \dim(F_k\cap x(F_k^\perp))=k_2-2h,xv\not\in F_k\}.\end{multline*}
Notice that, since $xv\in x(F_k^\perp)=(F_k\cap\Im(x))^{\pperp}$, we have $F_k\cap\Im(x)\subset (\C xv)^{\pperp}$. Also the bilinear form $\llangle~,~\rrangle$ restricted to the $(2n_2 - 1)$-dimensional space $(\C xv)^{\pperp}\subset \Im(x)$ has rank $2n_2-2$ ($r=n_2-1$). It follows that $F_k\cap x(F_k^{\perp})\in Z:= \{S\in\Gr^{\pperp}_{k_2-2h}((\C xv)^{\pperp})~|~S\cap \C xv=0\}$ which is an open dense subvariety of  $\Gr_{k_2-2h}^{\pperp}((\C xv)^{\pperp})$, hence irreducible of dimension
\begin{equation*}
\dim\Gr_{k_2-2h,2n_2-1}^{\perp n_2-1}=(k_{2}-2h)(2n_{2}-1-k_2+2h)-\tfrac{1}{2}(k_{2}-2h)(k_{2}-2h-1).
\end{equation*}
Over every point $S\in Z$, we consider only the points $U\in(\pi_2^{k_2})^{-1}(S)$ such that $xv\not\in U$, and $U\subset (\C xv)^{\pperp}$ i.e.
we are choosing $U/S\in \Gr_{2h}((S^{\pperp}\cap(\C xv)^{\pperp})/S)$, such that $U/S\cap\C xv/S=0$ (where $\C xv/S$ is the image of $\C xv$ under the projection $S^{\pperp}\cap(\C xv)^{\pperp}\to (S^{\pperp}\cap(\C xv)^{\pperp})/S$) so each fibre, if nonempty, is an open subvariety of a Grassmannian, hence irreducible of dimension
$$\dim\Gr_{2h,2n_2-2(k_2-2h)-1}= 2h(2n_{2} - 2k_{2}+2h-1).$$
This fibre is nonempty if and only if it is possible to choose $U/S$ that does not contain $\C xv/S$, if and only if 
\begin{align*}
 2h&<2n_2-2(k_2-2h)-1\\
 2h&<2n_2-2k_2+4h-1\\
 1&<2n_2-2k_2+2h\\
 1/2&<n_2-k_2+h.
\end{align*}
which is equivalent to $n_2-k_2+h>0$ since those are all integers.

Finally, over a point $U\in(\pi_2^{k_2})^{-1}\left(Z\right)$, (with $U\subset (\C xv)^{\pperp}$ and $xv\not\in U$) we have that, when $n_1\geq 1$,
$(\pi_1^k)^{-1}(U)$ is isomorphic to an open dense subvariety of $$\Gr_{k_1}^\perp(\ker(x)\cap(\C[x]v)^\perp/U)\simeq \Gr_{k_1,2n_1+2n_2-1-k_2}^{\perp,n_1-1}$$
which is irreducible of dimension $k_1(2n_2+2n_1-k_2-k_1-1)-\tfrac{1}{2}k_{1}(k_{1}-1)$. If $n_1=0$, then necessarily $k_1=0$ and $(\pi_1^k)^{-1}(U)$ is a single point, so the formula for the dimension works in this case as well.
In conclusion, $\mathcal{X}^8(k_2,h)$ is an irreducible variety of dimension 
\begin{align}\label{eq:dim-var-vnotKer-xvnotF}
    & (k_{2} - 2h)(2n_{2} - k_{2}  + 2h -1) - \tfrac12(k_{2}-2h)(k_{2}-2h-1)
+2h(2n_{2} - 2k_{2}+2h-1)+\\
    &~k_1(2n_2+2n_1-k_2-k_1-1)-\tfrac{1}{2}k_{1}(k_{1}-1). \notag
\end{align}

For $(\mu,\nu)=(2^{n_2}1^{n_1},\emptyset)$, $(\mu',\nu')=(2^{n_2-k_2+h}1^{n_1-k_1+k_2-2h},\emptyset)$ we compute the difference:
\begin{equation}\label{eq:diff-dd'-dimX-7}
	\begin{split}
    d_{(\mu,\nu)}^{\alpha} - d_{(\mu',\nu')}^{\alpha'} -\dim\mathcal{X}^8(k_2,h)&=2N(2^{n_2}1^{n_1})- \frac{1}{2} \sum_{i\geq 1} (\alpha_{i}^{2} - \alpha_{i})-2N(2^{n_{2}-k_{2}+h}1^{n_{1}-k_{1}+k_{2}-2h})\\
    &\quad + \frac{1}{2} \sum_{i\geq 2} (\alpha_{i}^{2} - \alpha_{i})-\eqref{eq:dim-var-vnotKer-xvnotF}\\
    & = (n_{1}+ n_{2})(n_{1} + n_{2} - 1) + n_{2}(n_{2}-1) - \tfrac{1}{2}k(k-1) \\
    &\quad -(n_{1}+ n_{2}-k_{1}-h) (n_{1}+ n_{2}-k_{1}-h - 1) \\
    &\quad - (n_{2}-k_{2}+h)(n_{2}-k_{2}+h-1) -\eqref{eq:dim-var-vnotKer-xvnotF}\\
    &= h(2n_{1} - 2k_{1}+1).
    \end{split}
\end{equation}
Since $h\geq 0$ and $n_1\geq k_1$ this difference is always greater or equal to zero and it equals zero if and only if $h=0$.

For fixed $0\leq n_1'\leq n_1+n_2$, $0< n_2'\leq n_2$ such that $n_1'+2n_2'+k=n_1+2n_2$, we have
\begin{align*}\cB_{((2^{n_{2}'}1^{n_1'},\emptyset),\alpha')}^{((2^{n_2}1^{n_1},\emptyset),\alpha)}=\coprod_{\substack{\max\{0,k-n_1\}\leq k_2\leq\min\{k,2n_2\} \\ \max\{0,k_2-n_2+1\}\leq h\leq \frac{k_2}{2} \\ k_2-h=n_2-n_2'}}\mathcal{X}^8(k_2,h)=\coprod_{\substack{\max\{0,n_2-n_1'-n_2'\}\leq h \\
h\leq\min\{k-(n_2-n_2'), n_2-n_2'-1\}}}\mathcal{X}^8(h+n_2-n_2',h)
\end{align*}
As in previous cases, if $\cX^8(k_2,h)\neq\emptyset\neq\cX^8(k_2+1,h+1)$, then $\cX^8(k_2+1,h+1)\subset\overline{\cX^8(k_2,h)}$. It follows that there is one irreducible piece of maximal dimension that contains all the other ones in its closure as positive codimension subvarieties hence $\cB_{((2^{n_2'}1^{n_{1}',\emptyset}),\alpha')}^{((2^{n_2}1^{n_1},\emptyset),\alpha)}$ is irreducible.
By \eqref{eq:diff-dd'-dimX-7} we have that the piece of maximal dimension is $\mathcal{X}^8(h+n_2-n_2',h)$ for minimal $h$ i.e. $h=\max\{0,n_2-n_1'-n_2'\}$. 
The minimal $h$ is $h=0$ if and only if $n_2\leq n_1'+n_2'$ if and only if the shape $(2^{n_2'}1^{n_1'},\emptyset)$ is obtained from $(2^{n_2}1^{n_1},\emptyset)$ by removing boxes in a vertical strip. In conclusion, 
$\dim\cB_{((\mu',\nu'),\alpha')}^{((\mu,\nu),\alpha)}=d_{(\mu,\nu)}^{\alpha} - d_{(\mu',\nu')}^{\alpha'}$ exactly when the shape is obtained by removing boxes from different rows and is strictly lower otherwise.
\begin{example}Here $n_1=2$, $n_2=3$, $k=4$. In the first example $n'_1+n_2'=0+2<3=n_2$ so some boxes are removed from the same row, while in the second example $n'_1+n'_2=2+1=3\geq n_2$ so the boxes are removed from different rows. 
$$(2^{n'_2}1^{n'_1},\emptyset)=\left(\young(~~,~~),\emptyset\right)\subset (2^{n_2}1^{n_1},\emptyset)=\left(\young(~~,~~,~~,:~,:~),\emptyset\right),\quad \left(\young(~~,~~,\xx\xx,:\xx,:\xx),\emptyset\right);$$
$$(2^{n'_2}1^{n'_1},\emptyset)=\left(\young(~~,:~,:~),\emptyset\right)\subset (2^{n_2}1^{n_1},\emptyset)=\left(\young(~~,~~,~~,:~,:~),\emptyset\right),\quad \left(\young(~~,\xx ~,\xx ~,:\xx,:\xx),\emptyset\right).$$
\end{example}

\section{Extending to any $x^\ell=0$} \label{section:extend}
In this section we outline how our computations for the proof of the case of $x^2=0$ could be extended to any nilpotent $x$. We are not able to carry it out in this paper, but it should be the basis for future work.

Suppose that $x^\ell=0$, $x^{\ell-1}\neq 0$ for some $\ell>0$, then $\lambda=\left(\ell^{n_\ell}\cdots 2^{n_2}1^{n_1}\right)$ for some $n_j\geq 0$ and $J(x)=\left(\ell^{2n_\ell}\cdots 2^{2n_2}1^{2n_1}\right)$. For any isotropic subspace $0\subset F_k\subset F_k^\perp\subset V$ with $F_k\subset \ker(x)$, we define $k_j:=\dim (F_k\cap \Im(x^{j-1}))-\dim (F_k\cap \Im(x^{j}))$ for $j=1,\ldots,\ell$.
Then by Lemma \ref{lem:jtype-quot} we have that
$$J\left(x|_{V/F_k}\right)=J\left(x|_{F_k}\right)=\left(\ell^{2n_\ell-k_\ell}(\ell-1)^{2n_{\ell-1}-k_{\ell-1}+k_\ell}\cdots 2^{2n_2-k_2+k_3}1^{2n_1-k_1+k_2}\right).$$

Now, notice that since $\Im (x)\subset F_k^\perp$, for all $j$ we have that $\Im(x^{j})\subset x^{j-1}(F_k)$. We have then a filtration of subspaces
$$\cdots\subset F_k\cap x^j(F_k^\perp)\subset F_k\cap \Im(x^j)\subset F_k\cap x^{j-1}(F_k^\perp)\subset F_k\cap \Im(x^{j-1})\subset \cdots $$
By Lemma \ref{lem:F-rad-j}, we have $x^j(F_k^\perp)=\left(F_k\cap \Im(x^j)\right)^{\pperp_j}$, so $\left(F_k\cap \Im(x^j)\right)/\left(F_k\cap x^j(F_k^\perp)\right)$ is a symplectic space with non-degenerate skew-symmetric bilinear form given by the restriction of $\llangle~,~\rrangle_j$. We denote the dimension $\dim\left(F_k\cap \Im(x^j)\right)/\left(F_k\cap x^j(F_k^\perp)\right)=2h_j$ (so the $h$ we used in the previous sections would actually become $h_1$).

Then, we apply Lemma \ref{lem:FcapFperp} and, since 
$$a'_{j-1}-a'_j=\begin{cases}k_\ell-2h_{\ell-1} & \text{ if }j=\ell \\ k_{j}+2h_j-2h_{j-1} & \text{ if }1<j<\ell \\ k_1+2h_1 & \text{ if j=1} \end{cases},$$ 
we obtain that 
\begin{align*}&J\left(x|_{F_k^\perp/F_k}\right)=\\
&=(\ell^{2n_\ell-2k_\ell+2h_{\ell-1}}(\ell-1)^{2n_{\ell-1}-2k_{\ell-1}+2k_\ell-4h_{\ell-1}+2h_{\ell-2}}(\ell-2)^{2n_{\ell-2}-2k_{\ell-2}+2k_{\ell-1}+2h_{\ell-3}-4h_{\ell-2}+2h_{\ell-1}}\cdots \\
& \cdots 2^{2n_2-2k_2+2k_3+2h_1-4h_2+2h_3}1^{2n_1-2k_1+2k_2-4h_1+2h_2}) \end{align*}

The difficulty then lies in computing $J\left(x|_{F_k^\perp/F_k+\C[x]v}\right)$ because there are many different possibilities in general for how $v$ can fit within the various subspaces involved in the computation and we do not know how to treat it in general. The case-by-case approach we used for $\ell=1,2$ quickly becomes unwieldy for $\ell>2$.

\section{Exotic Robinson-Schensted-Knuth correspondence} \label{section:ersk}

The Robinson-Schensted correspondence is a combinatorial bijection between permutations and pairs of standard Young tableaux of the same shape, and it was given a geometric interpretation by Steinberg ( see \cite{rs} ) in terms of varieties of pairs of flags in Type A. In \cite{Ros12}, one of the authors generalized the construction to pairs of partial flags, obtaining the Robinson-Schensted-Knuth (RSK) correspondence between matrices with nonnegative integer entries and pairs of semistandard Young tableaux of the same shape. We now outline how Conjecture \ref{conj:main} similarly implies an exotic Robinson-Schensted-Knuth correspondence that generalizes the one obtained for complete flags in \cite{NRS2}.

\begin{definition}[Partial exotic Steinberg Variety]Let $\alpha,\beta$ be compositions of $n$, we define the
 \emph{exotic Steinberg variety of type $(\alpha,\beta)$} to be
$$ \fZ^{\alpha,\beta}:=\{(F_\bullet,F'_\bullet,(v,x))\in \cF^\alpha(V)\times\cF^\beta(V)\times \fN~|~F_\bullet\in\mathcal{C}^\alpha_{(v,x)},~ F'_\bullet\in\mathcal{C}^\beta_{(v,x)}\}.$$
\end{definition}
Notice that for $\alpha=\beta=1^n$, we obtain the exotic Steinberg variety of \cite[Def. 6.1]{NRS1}.

\begin{definition}[Relative Position]Let $\alpha=(\alpha_1,\ldots,\alpha_m)\vDash n$, $\beta=(\beta_1,\ldots,\beta_\ell)\vDash n$, $F\in \cF^\alpha(V)$, $F'\in\cF^\beta(V)$. We define a matrix $A=A(F,F')=(a_{i,j})_{1\leq i\leq 2m,1\leq j\leq 2\ell}$ by
$$a_{i,j}=\dim( (F_{\check{\alpha}_i}\cap F'_{\check{\beta}_j})/ (F_{\check{\alpha}_{i-1}}\cap F'_{\check{\beta}_j}+F_{\check{\alpha}_i}\cap F'_{\check{\beta}_{j-1}}))$$
which we call the \emph{relative position} of the partial flags $F$ and $F'$.    
\end{definition}
\begin{remark}Notice that the row sums of $A(F,F')$ are $\hat{\alpha}=(\alpha_1,\alpha_2,\ldots,\alpha_m,\alpha_m,\ldots,\alpha_1)$ and the column sums are $\hat{\beta}=(\beta_1,\beta_2,\ldots,\beta_\ell,\beta_\ell,\ldots,\beta_1)$. Also, since $F^\perp_{\check{\alpha}_i}=F_{\check{\alpha}_{2m-i}}$ and $F'^\perp_{\check{\beta}_j}=F'_{\check{\beta}_{2\ell-j}}$, it follows that 
\begin{equation}\label{eq:mat-symmetry}a_{2m+1-i,2\ell+1-j}=a_{i,j}.\end{equation}
\end{remark}
For $\alpha,\beta\vDash n$, we define $M^{\alpha,\beta}$ to be the set of matrices with nonnegative integer entries, with row sums $\hat{\alpha}$, column sums $\hat{\beta}$ and satisfying \eqref{eq:mat-symmetry}.
\begin{remark}
The set $M^{\alpha,\beta}$ parametrizes the orbits of the diagonal action of $\Sp_{2n}$ on $\cF^\alpha(V)\times \cF^\beta(V)$. Also, we can identify $M^{\alpha,\beta}$ with the double cosets $W_\alpha \backslash W/ W_\beta$, where $W$ is the Weyl group of Type C (signed permutations) and $W_\alpha$, $W_\beta$ are parabolic subgroups.
\end{remark}

Let $\theta:\fZ^{\alpha,\beta}\to\cF^\alpha(V)\times\cF^\beta(V)$ defined by $\theta(F_\bullet,F'_\bullet,(v,x))=(F,F')$, then for $A\in M^{\alpha,\beta}$ we define $\fZ^{\alpha,\beta}_A=\theta^{-1}(A)$. 

On the other hand, for any bipartition of $n$, $(\mu,\nu)$, and composition $\alpha\vDash n$, we let $\SYB^\alpha(\mu,\nu)$ be the set of all semistandard bitableaux of shape $(\mu,\nu)$ and content $\alpha$. Then for any $T\in \SYB^\alpha(\mu,\nu)$, $T'\in\SYB^{\beta}(\mu,\nu)$, we can consider the subvariety 
$$\fZ^{\alpha,\beta}_{T,T'}:=\{(F,F',(v,x))\in\fZ^{\alpha,\beta}~|~\Phi(F)=T,~\Phi(F')=T'\}.$$

\begin{conj}
The variety $\fZ^{\alpha,\beta}$ (if nonempty) is pure dimensional of dimension
$2n^2-\frac{1}{2}\sum_{i=1}^m(\alpha_i^2-\alpha_i)-\frac{1}{2}\sum_{i=1}^\ell(\beta_i^2-\beta_i)$ and its irreducible components are $\{\overline{\fZ^{\alpha,\beta}_A}~|~A\in M^{\alpha,\beta}\}$. The irreducible components of $\fZ^{\alpha,\beta}$ can also be parametrized as 
$$\{\overline{\fZ^{\alpha,\beta}_{T,T'}}~|~(\mu,\nu)\in\cQ_n,~T\in \SYB^\alpha(\mu,\nu),~T'\in\SYB^{\beta}(\mu,\nu)\}.$$     
\end{conj}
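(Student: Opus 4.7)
The plan is to set up the two parametrizations separately using the two natural projections of $\fZ^{\alpha,\beta}$ and then match them up.

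First, I would use the projection $p : \fZ^{\alpha,\beta} \to \fN$, $(F_\bullet, F'_\bullet, (v,x)) \mapsto (v,x)$. The fiber over $(v,x) \in \mO_{(\mu,\nu)}$ is $\cC^\alpha_{(v,x)} \times \cC^\beta_{(v,x)}$. Granting Conjecture \ref{conj:main} (which is Theorem \ref{thm:main} when $x^2 = 0$), its top-dimensional irreducible components are the products $\overline{\Phi^{-1}(T)} \times \overline{\Phi^{-1}(T')}$ indexed by $(T,T') \in \SYB^\alpha(\mu,\nu) \times \SYB^\beta(\mu,\nu)$, each of dimension $d^\alpha_{\mu,\nu} + d^\beta_{\mu,\nu}$. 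Combined with the known dimension $\dim \mO_{(\mu,\nu)} = 2n^2 - 4N(\mu+\nu) - 2|\nu|$ of an exotic orbit, an elementary arithmetic check shows
\begin{equation*}
\dim \mO_{(\mu,\nu)} + d^\alpha_{\mu,\nu} + d^\beta_{\mu,\nu} \;=\; 2n^2 - \tfrac12\sum_i(\alpha_i^2 - \alpha_i) - \tfrac12\sum_i(\beta_i^2 - \beta_i),
\end{equation*}
a quantity independent of $(\mu,\nu)$. Varying $(\mu,\nu)$ across $\cQ_n$ then yields irreducible pieces $\overline{\fZ^{\alpha,\beta}_{T,T'}}$ of this common top dimension, which gives both pure-dimensionality of $\fZ^{\alpha,\beta}$ and the bitableau parametrization of its top-dimensional components.

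Second, I would analyze the projection $\theta : \fZ^{\alpha,\beta} \to \cF^\alpha(V) \times \cF^\beta(V)$. The diagonal $\Sp_{2n}$-action on the target has finitely many smooth irreducible orbits $\cO_A$ indexed by $A \in M^{\alpha,\beta}$. For fixed $(F_\bullet, F'_\bullet) \in \cO_A$, the fiber $\theta^{-1}(F_\bullet, F'_\bullet)$ consists of pairs $(v,x)$ with $v \in F_n \cap F'_n$ and $x \in \cS$ simultaneously respecting both filtrations (nilpotency is automatic from the finite filtration condition); these are all linear conditions, so the fiber is a vector space and $\fZ^{\alpha,\beta}_A$ is a vector bundle of constant rank over $\cO_A$, hence smooth and irreducible. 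Since the $\fZ^{\alpha,\beta}_A$ form a finite stratification of $\fZ^{\alpha,\beta}$, pure-dimensionality (established in the first step) forces each $\overline{\fZ^{\alpha,\beta}_A}$ to have the common top dimension and these closures are therefore precisely the irreducible components of $\fZ^{\alpha,\beta}$.

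The main obstacle is twofold. On the one hand, the argument above rests on Conjecture \ref{conj:main}, which at present is unconditional only when $x^2 = 0$ (Theorem \ref{thm:main}); an unconditional proof of the exotic Steinberg statement therefore requires first settling Conjecture \ref{conj:main} in general, whose principal combinatorial difficulty is outlined in Section \ref{section:extend}. On the other hand, reconciling the two parametrizations --- showing that each $\fZ^{\alpha,\beta}_A$ has a unique associated pair $(T_A, T'_A)$ of bitableaux of a common shape, arising from the generic value of $\Phi \times \Phi$ on $\fZ^{\alpha,\beta}_A$, and that the resulting map $A \mapsto (T_A, T'_A)$ is a bijection with $\coprod_{(\mu,\nu) \in \cQ_n} \SYB^\alpha(\mu,\nu) \times \SYB^\beta(\mu,\nu)$ --- is the heart of the exotic Robinson--Schensted--Knuth correspondence and the source of its combinatorial content. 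This step will most naturally proceed by induction on the length of the compositions, mirroring the inductive structure already used to reduce Conjecture \ref{conj:main} to Conjecture \ref{prop:B-irred}, and generalizing the Type A construction of \cite{Ros12} and the complete-flag exotic construction of \cite{NRS2}.
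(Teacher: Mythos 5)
The paper does not give a proof of this statement; it is stated as a conjecture and only remarked that it ``would follow from Conjecture~\ref{conj:main}'', so there is nothing in the paper to compare against directly. Your sketch assembles the right ingredients --- the two projections $p$ and $\theta$, the semi-small dimension relation $\dim\mO_{(\mu,\nu)} = 2n^2 - 4N(\mu+\nu) - 2|\nu|$ (and the arithmetic check is correct), and the vector-bundle structure of the strata $\fZ^{\alpha,\beta}_A$ over the orbits $\cO_A$ --- but the logical chain around pure dimensionality has a genuine gap.

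The first step does not establish pure dimensionality. Conjecture~\ref{conj:main} explicitly asserts that $\cC^\alpha_{(v,x)}$ is \emph{not} pure dimensional in general (cf.\ Example~\ref{ex:nonpure}). Hence the fiber $\cC^\alpha_{(v,x)}\times\cC^\beta_{(v,x)}$ of $p$ over a point of $\mO_{(\mu,\nu)}$ has irreducible components of dimension strictly less than $d^\alpha_{\mu,\nu}+d^\beta_{\mu,\nu}$, and these propagate to lower-dimensional irreducible pieces of $p^{-1}(\mO_{(\mu,\nu)})$. To get pure dimensionality of $\fZ^{\alpha,\beta}$ you would have to show that every such lower-dimensional piece lies in the closure of a top-dimensional one (necessarily coming from a different orbit $\mO_{(\mu',\nu')}$), and your argument does not address this. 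What you do correctly establish is that any irreducible component of \emph{maximal} dimension $2n^2 - \tfrac12\sum_i(\alpha_i^2 - \alpha_i) - \tfrac12\sum_i(\beta_i^2 - \beta_i)$ must be of the form $\overline{\fZ^{\alpha,\beta}_{T,T'}}$.

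The second step then compounds the error: pure dimensionality of $\fZ^{\alpha,\beta}$ does \emph{not} force every $\overline{\fZ^{\alpha,\beta}_A}$ to be top dimensional. A stratification of a pure dimensional variety can perfectly well contain strata whose closures have strictly smaller dimension (e.g., stratify $\C^2$ by $\{x\neq 0\}$ and $\{x=0\}$). What is actually needed is a direct computation showing that $\dim\fZ^{\alpha,\beta}_A = \dim\cO_A + \dim\theta^{-1}(F_\bullet,F'_\bullet)$ is constant over $A\in M^{\alpha,\beta}$, paralleling the classical Steinberg computation where $\dim Z_w = \dim(G/B) + \ell(w) + \bigl(|\Phi^+| - \ell(w)\bigr)$ is independent of $w$; this requires showing the dimension drop of the vector-space fiber exactly compensates the dimension of the orbit. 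Once that constancy is established, pure dimensionality and the identification of the irreducible components with $\{\overline{\fZ^{\alpha,\beta}_A}\}$ both follow at once, and your first step then supplies the matching parametrization by pairs of semistandard bitableaux. So the two halves of your argument should be reversed in logical order, and the missing dimension computation in the $\theta$-direction is the real content still to be supplied.
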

This would follow from Conjecture \ref{conj:main} and give a geometrically defined bijection
$$M^{\alpha,\beta}\longleftrightarrow \coprod_{(\mu,\nu)\in\cQ_n}\SYB^\alpha(\mu,\nu)\times\SYB^{\beta}(\mu,\nu)$$
given by $A\leftrightarrow (T,T')$ if and only if $\overline{\fZ^{\alpha,\beta}_A}=\overline{\fZ^{\alpha,\beta}_{T,T'}}$. We expect that its combinatorial description should be a `semistandardization' of the algorithm of \cite{NRS2}, analogously to what was done in \cite{Ros12} for the RSK correspondence in Type A.
\begin{example}
Let $n=2$, and $\alpha=\beta=(2)$, then there are three pairs of semistandard bitableaux of the same shape with contents $\alpha$ and $\beta$, which are 
$$ \left(\left(\young(1),\young(1)\right),\left(\young(1),\young(1)\right)\right),\quad\left(\left(\young(1,1),\varnothing\right),\left(\young(1,1),\varnothing\right)\right),\quad \left(\left(\varnothing,\young(1,1)\right),\left(\varnothing,\young(1,1)\right)\right).$$
It can then be checked that the geometric correspondence matches these respectively to the following matrices of relative positions (which comprise the set $M^{(2),(2)}$):
$$ \begin{bmatrix} 2 & 0 \\ 0 & 2 \end{bmatrix} \qquad \begin{bmatrix} 1 & 1 \\ 1 & 1 \end{bmatrix} \qquad \begin{bmatrix} 0 & 2 \\ 2 & 0 \end{bmatrix}.$$
\end{example}

\bibliographystyle{alpha}

\end{document}